\mathchardef\ordinarycolon\mathcode`\:
\def\vcentcolon{\mathrel{\mathop\ordinarycolon}}
\active\lowercase{\endgroup\let :\vcentcolon}
\theoremstyle{plain}
\newtheorem{theorem}{Theorem}[section]
\newtheorem{lemma}[theorem]{Lemma}
\newtheorem{proposition}[theorem]{Proposition}
\theoremstyle{definition}
\newtheorem{definition}[theorem]{Definition}
\newtheorem{example}[theorem]{Example}
\newtheorem{notation}[theorem]{Notation}
\newtheorem{remark}[theorem]{Remark}
\let\origthebibliography=\thebibliography
\def\thebibliography{\renewcommand{\section}[2]{}\origthebibliography}
\newcommand{\bebe}{\Gamma}
\newcommand{\comp}{\mathbin{\circ}}
\newcommand{\diag}{\delta}
\newcommand{\dialg}{\mathcal{D}}
\newcommand{\eevec}{\varepsilon}
\newcommand{\elltwo}{L^2( \R_+; \mul )}
\newcommand{\evec}[1]{\eevec(#1)}
\newcommand{\evecs}{\mathcal{E}}
\newcommand{\fock}{\mathcal{F}}
\newcommand{\GNS}{\bigl(\mkern2mu\mmul, \pi, \vac \bigr)}
\newcommand{\hilb}{\mathsf{H}}
\newcommand{\hilbb}{{\hilb\rb}}
\newcommand{\hlf}{\mbox{$\frac12$}}
\newcommand{\ini}{\mathsf{h}}
\newcommand{\Lind}{\mathcal{L}}
\newcommand{\mmul}{{\widehat{\mul}}}
\newcommand{\mmulb}{{\mmul\rb}}
\newcommand{\mul}{\mathsf{k}}
\newcommand{\opsp}{\mathsf{V}}
\newcommand{\opsq}{\mathsf{W}}
\newcommand{\ppi}{\wt{\pi}}
\newcommand{\sstate}{\wt{\state}}
\newcommand{\State}{\varrho}
\newcommand{\state}{\rho}
\newcommand{\statesp}{\mathsf{K}}
\newcommand{\vac}{\omega}
\newcommand{\vna}{\mathsf{A}}
\newcommand{\algten}{\mathbin{\odot}}
\newcommand{\bop}[2]%
{\ifthenelse{\equal{#2}{}}{\bopp( #1 )}{\bopp( #1; #2 )}}
\newcommand{\bopp}{\mathsf{B}}
\newcommand{\cb}{\mathrm{cb}}
\newcommand{\conj}[1]{#1^\dagger}
\newcommand{\dyad}[2]{\mathop{| #1 \rangle\langle #2 |}}
\newcommand{\hso}[1]{\bopp_2( #1 )}
\newcommand{\id}{I}
\newcommand{\im}{\mathop{\mathrm{im}}}
\newcommand{\indf}[1]{1_{#1}}
\newcommand{\kbo}[3]{#1\bopp( #2; #3 )}
\newcommand{\lin}{\mathop{\mathrm{lin}}}
\newcommand{\matten}{\mathbin{\otimes_{\mathrm{M}}}}
\newcommand{\mmodf}[2]{\nnmodf{}{#1}{#2}}
\newcommand{\nmodf}[2]{\nnmodf{\diag}{#1}{#2}}
\newcommand{\nnmodf}[3]{m_{#1}( #2, #3 )}
\newcommand{\rank}{\mathop{\mathrm{rank}}}
\newcommand{\rb}{\mathrm{b}}
\newcommand{\rd}{\mathrm{d}}
\newcommand{\std}{\,\rd}
\newcommand{\tfn}[1]{\mathbbm{1}_{#1}}
\newcommand{\tr}{\mathop{\mathrm{tr}}}
\newcommand{\tto}[1]{\rightarrow_{#1}}
\newcommand{\ttocb}{\tto{\cb}}
\newcommand{\ttommb}{\tto{\mkern2mu\mmulb}}
\newcommand{\uwkten}{\mathbin{\bar{\otimes}}}
\newcommand{\wt}[1]{\widetilde{#1}}
\newcommand{\etc}{\textit{et cetera}}
\newcommand{\ie}{\textit{i.e., }}
\renewcommand{\ge}{\geqslant}
\renewcommand{\le}{\leqslant}
\newcommand{\C}{\mathbb{C}}
\newcommand{\I}{\mathrm{i}}
\newcommand{\R}{\mathbb{R}}
\newcommand{\Z}{\mathbb{Z}}
\newcommand{\ds}{\displaystyle}
\newcommand{\gap}{\vspace{1ex}\noindent}
\newenvironment{mylist}%
{\begin{list}{}%
{\leftmargin 4.75em\labelwidth 3.5em\rightmargin 0.8em%
\topsep 0ex\itemsep 0.25ex}}%
{\end{list}}
{\begin{list}{}{}}{\end{list}}
\newcommand{\tu}[1]{\textup{#1}}
\numberwithin{equation}{section}
\begin{document}

\begin{center}
{\large Quantum random walks and thermalisation II}\\[1ex]
{\normalsize Alexander C.~R. Belton}\\[0.5ex]
{\small Department of Mathematics and Statistics\\
Lancaster University, United Kingdom\\[0.5ex]
\href{mailto:a.belton@lancaster.ac.uk}%
{\textsf{a.belton@lancaster.ac.uk}} \qquad \today}
\end{center}

\begin{abstract}\noindent
A convergence theorem is obtained for quantum random walks with
particles in an arbitrary normal state. This result unifies and
extends previous work on repeated-interactions models, including that
of the author (2010, \textit{J.~London Math.\ Soc.}~(2)~\textbf{81},
412--434; 2010, \textit{Comm.\ Math.\ Phys.}~\textbf{300},
317--329). When the random-walk generator acts by ampliation and
multiplication or conjugation by a unitary operator, necessary and
sufficient conditions are given for the quantum stochastic cocycle
which arises in the limit to be driven by an isometric, co-isometric
or unitary process.
\end{abstract}

\gap
{\footnotesize\textit{Key words:} thermalization; heat bath; repeated
interactions; toy Fock space; non-commutative Markov chain; quantum
stochastic cocycle.
}

\gap
{\footnotesize\textit{MSC 2010:} %
81S25 (primary);   
46L53,             
46N50,             
60F17,             
82C10,             
82C41 (secondary). 
}

\section{Introduction}

The repeated-interactions framework, also called the theory of quantum
random walks or non-commutative Markov chains, has attracted much
attention. Physically, it describes a small quantum-mechanical system
interacting with a heat bath which is modelled by a chain of identical
particles. There have been many applications of this model; for
example, to quantum optics \cite{BJM06,BrP09,Gou04}, to quantum
control \cite{BHJ09,GoJ09} and to the dilation of quantum dynamical
semigroups \cite{Sah08}; for the latter, see also
\cite[Section~6]{Blt10a}. There are interesting connexions between
non-commutative Markov chains and multivariate operator theory
\cite{Goh10}.

Many results in this area (for example, those contained in
\cite{AtJ07b}, \cite{BJM06} and \cite{BrP09}) focus only on the
reduced dynamics, \ie the expectation semigroup which arises in the
limit. In contrast, the results obtained below provide a full
quantum-stochastic description of the limit dynamics. They may be
considered to be quantum analogues of Donsker's theorem, which gives
the convergence of suitably scaled classical random walks to Brownian
motion.

In previous work, the particles of the model were required to be
either in a vector state \cite{AtP06,Blt10a} or in a faithful normal
state~\cite{Blt10b}. Here a generalisation is obtained,
Theorem~\ref{thm:main}, which applies to quantum random walks with
particles in an arbitrary normal state; the previous results appear as
special cases.

Let $\state$ be a normal state on the particle algebra
$\bop{\statesp}{}$ and suppose that the linear map
$\Phi : \bop{\ini}{} \to \bop{\ini \otimes \statesp}{}$, which
describes the interaction between the system and a particle, depends
on the step-size parameter $\tau$. (For simplicity, the domain of the
generator is taken to be $\bop{\ini}{}$ throughout this introduction;
below it may be a general concrete operator space, or a von~Neumann
algebra for the applications in Section~\ref{sec:applications}.) In
order that the random walk with generator~$\Phi$ and particle
state~$\state$ converges to a limit cocycle, the mapping $\Phi$ must
behave correctly as the step size~$\tau \to 0$.

As shown in \cite{Blt10a}, when $\state$ is a vector state given by
$\vac \in \statesp$ then it is required that
\[
\begin{bmatrix}
\ds \frac{\Phi( a )^0_0 - a}{\tau} & %
\ds \frac{\Phi( a )^0_\times}{\sqrt{\tau}} \\[1ex]
\ds \frac{\Phi( a )^\times_0}{\sqrt{\tau}} & %
\ds \Phi( a )^\times_\times - a \otimes \id_\mul
\end{bmatrix} \to %
\begin{bmatrix}
\ds \Psi( a )^0_0 & \ds \Psi( a )^0_\times \\[1ex]
\ds \Psi( a )^\times_0 & \Psi( a )^\times_\times
\end{bmatrix}
\qquad \text{as } \tau \to 0 %
\qquad \text{for all } a \in \bop{\ini}{},
\]
where the convergence holds in a suitable topology and the matrix
decomposition
\[
\bop{\ini \otimes \statesp}{} \ni T = %
\begin{bmatrix}
 T_0^0 & T_\times^0 \\[1ex]
 T_0^\times & T_\times^\times
\end{bmatrix} \in %
\begin{bmatrix}
 \bop{\ini}{} & \bop{\ini \otimes \mul}{\ini} \\[1ex]
 \bop{\ini}{\ini \otimes \mul} & \bop{\ini \otimes \mul}{}
\end{bmatrix}
\]
corresponds to the Hilbert-space decomposition
$\statesp = \C\vac \oplus \mul$.

For the other extreme, where the normal state $\state$ is faithful, a
conditional expectation~$d$ on~$\bop{\statesp}{}$ which
preserves~$\state$ is required, and then
\[
( \tau^{-1} \diag + \tau^{-1 / 2} \diag^\perp ) %
( \Phi( a ) - a \otimes \id_\statesp ) = %
\frac{\diag( \Phi( a ) - a \otimes \id_\statesp )}{\tau} + %
\frac{\diag^\perp\bigl( \Phi( a ) \bigr)}{\sqrt{\tau}}
\]
must converge to $\Psi( a ) $ as $\tau \to 0$, where
$\diag := \id_{\bop{\ini}{}} \uwkten d$ and
$\diag^\perp = \id_{\bop{\ini \otimes \statesp}{}} - \diag$;
see~\cite{Blt10b}.

The general case is resolved below. Let $\State$ be the density matrix
that corresponds to the normal state $\state$, decompose $\statesp$ by
letting $\statesp_0 := ( \ker \State )^\perp$, and let~$d_0$ be a
conditional expectation on $\bop{\statesp_0}{}$ which preserves the
faithful state
\[
\state_0 : \bop{\statesp_0}{} \to \C; \ %
Z \mapsto \state\Bigl( \begin{bmatrix}
 Z & 0 \\[1ex]
 0 & 0
\end{bmatrix} \Bigr).
\]
The direct sum $\statesp = \statesp_0 \oplus \statesp_0^\perp$
provides a matrix decomposition of operators
in~$\bop{\ini \otimes \statesp}{}$ and the appropriate modification of
$\Phi( a )$ has the form
\begin{equation}\label{eqn:mod}
\begin{bmatrix}
\ds \frac{\diag_0( \Phi( a )_0^0 - %
a \otimes \id_{\statesp_0} )}{\tau} + %
\frac{\diag_0^\perp( \Phi( a )_0^0 )}{\sqrt{\tau}} & %
\ds \frac{\Phi( a )_\times^0}{\sqrt{\tau}} \\[1ex]
\ds \frac{\Phi( a )_0^\times}{\sqrt{\tau}} & %
\ds \Phi( a )_\times^\times - a \otimes \id_{\statesp_0^\perp}
\end{bmatrix},
\end{equation}
where $\diag_0 := \id_{\bop{\ini}{}} \uwkten d_0$ and similarly for
$d^\perp$. The top-left corner, where $\rho$ is faithful, is scaled by
$\tau^{-1}$ on the range of $\diag_0$ and by $\tau^{-1 / 2}$ off it;
elsewhere, the scaling is as for a vector state, with $\C \vac$
and~$\mul$ replaced by $\statesp_0$ and $\statesp_0^\perp$,
respectively.

A concrete realisation $\GNS$ of the GNS representation for $\state$
is employed to obtain the main result, Theorem~\ref{thm:main}; this
circumvents problems which arise from taking a quotient, when the
state is not faithful, in the standard approach. Let $\diag$ be the
conditional expectation on $\bop{\ini \otimes \statesp}{}$ obtained by
extending~$d_0$ and ampliating, so that
\[
\diag( a \otimes X ) = a \otimes %
\begin{bmatrix}
d_0( X_0^0 ) & 0 \\[1ex]
 0 & 0 \\
\end{bmatrix} %
\qquad \text{for all } a \in \bop{\ini}{} \text{ and } %
X \in \bop{\statesp}{};
\]
further, let $\ppi := \id_{\bop{\ini}{}} \uwkten \pi$ and
$\sstate := \id_{\bop{\ini}{}} \uwkten \state$. If the modification
\eqref{eqn:mod} converges to a limit~$\Psi$ in a suitable manner
then the embedded random walk with generator $\ppi \comp \Phi$
converges to a limit cocycle $j^\psi$ with generator
\begin{equation}\label{eqn:psimat}
\psi : a \mapsto \begin{bmatrix}
( \sstate \comp \Psi)( a ) & %
( \ppi \comp \diag^\perp \comp \Psi )( a )_\times^0 \\[2ex]
( \ppi \comp \diag^\perp \comp \Psi )( a )_0^\times & %
\ppi( \wt{P}_\times \Psi( a ) \wt{P}_\times )_\times^\times
\end{bmatrix},
\end{equation}
where the matrix decomposition here is that induced by writing
$\mmul$ as $\C\vac \oplus \mul$ and $\wt{P}_\times$ is the
orthogonal projection from $\ini \otimes \statesp$ onto
$\ini \otimes \statesp_0^\perp$.

The presence of the conditional expectation $\diag$ and the orthogonal
projection $\wt{P}_\times$ in the formula~\eqref{eqn:psimat} implies
that, in general, the number of independent noises in the quantum
stochastic differential equation satisfied by the cocycle $j^\psi$ is
fewer than might be expected. This thermalisation phenomenon, which
was first described in \cite{AtJ07a}, is quantified for particles with
finite degrees of freedom in Proposition~\ref{prp:indnoise}.

As is well known, if the cocycle generator $\psi$ acts by right
multiplication, \ie has the form
\[
\psi : a \mapsto ( a \otimes \id_\mmul ) G
\]
for some $G \in \bop{\ini \otimes \mmul}{}$, then
$j^\psi_t( a ) = ( a \otimes \id_\mmul ) X_t$ for all $t \ge 0$ and
$a \in \bop{\ini}{}$, and the driving process
$\bigl( X_t := j_t^\psi( \id_\ini ) \bigr)_{t \ge 0}$ is isometric or
co-isometric if and only if
\[
G + G^* + G^* \Delta G = 0 \qquad \text{or} \qquad
G + G^* + G \Delta G^* = 0,
\]
respectively, where $\Delta$ is the orthogonal projection from
$\ini \otimes \mmul$ onto $\ini \otimes \mul$. If $\Psi$ acts by right
multiplication then so does the map $\psi$ given
by~\eqref{eqn:psimat}, and Theorem~\ref{thm:hp} provides necessary and
sufficient conditions on $\Psi$ for the process which drives~$j^\psi$
to be isometric or co-isometric. This is used in
Theorems~\ref{thm:exhp} and~\ref{thm:eh} to show that random-walk
generators of the form
\[
a \mapsto ( a \otimes \id_\statesp ) %
\exp\bigl( -\I \tau H( \tau ) \bigr) \qquad \text{and} \qquad %
a \mapsto \exp\bigl( \I \tau H( \tau ) \bigr) %
( a \otimes \id_\statesp ) \exp\bigl( -\I \tau H( \tau ) \bigr),
\]
where the Hamiltonian $H( \tau )$ behaves correctly as $\tau \to 0$,
give rise to limit cocycles which are driven by unitary processes, \ie
they are of the form
\[
a \mapsto ( a \otimes \id_\mmul ) U_t \qquad \text{and} \qquad %
a \mapsto U_t^* ( a \otimes \id_\mmul ) U_t \qquad %
\text{for all } t \ge 0,
\]
where the process $( U_t )_{t \ge 0}$ is composed of unitary
operators.

This article is organised as follows. The basics of quantum random
walks on operator spaces are reviewed in Section~\ref{sec:qrw};
Section~\ref{sec:concrete} contains the concrete GNS representation
and some subsidiary results. The main theorem is established in
Section~\ref{sec:main}, and the final section,
Section~\ref{sec:applications}, gives some applications of the general
theory.

\subsection{Conventions and notation}

For the most part, the conventions and notation of
\cite{Blt10a,Blt10b} are followed; some innovations have been
introduced in an attempt to increase clarity. Vector spaces have
complex scalar field; inner products are linear in the second
variable. An empty sum or product equals the appropriate additive or
multiplicative unit.

The indicator function of a set~$S$ is denoted by $\indf{S}$; the sets
of non-negative integers and non-negative real numbers are denoted by
$\Z_+ := \{ 0, 1, 2, \ldots \}$ and $\R_+ := [ 0, \infty )$. The
identity transformation on a vector space~$V$ is denoted by~$\id_V$,
the linear span of $A \subseteq V$ is denoted by $\lin A$ and the
image and kernel of a linear transformation~$T$ on~$V$ are denoted
by~$\im T$ and $\ker T$; the sets of $m \times n$ and $n \times n$
matrices with entries in~$V$ are denoted by $M_{m, n}( V )$ and
$M_n( V )$. If the vectors $u$ and $v$ lie in an inner-product
space~$V$ then~$\dyad{u}{v}$ is the linear operator on $V$ such that
$w \mapsto \langle v, w \rangle u$; the orthogonal complement of
$A \subseteq V$ is denoted by $A^\perp$. Algebraic, Hilbert-space and
ultraweak tensor products are denoted by $\algten$, $\otimes$ and
$\uwkten$, respectively. The von~Neumann algebra of bounded operators
on a Hilbert space~$\hilb$ is denoted by $\bop{\hilb}{}$, and
$\bop{\hilb_1}{\hilb_2}$ denotes the Banach space of bounded operators
from Hilbert space~$\hilb_1$ to Hilbert space~$\hilb_2$.

\section{Walks with particles in the vacuum state}\label{sec:qrw}

\subsection{Toy and Boson Fock space}

\begin{definition}
Let $\mmul$ be a Hilbert space containing the distinguished unit
vector $\vac$ and let $\mul := \mmul \ominus \C \vac$ be the
orthogonal complement of $\C \vac$ in $\mmul$. Given $x \in \mul$, let
$\widehat{x} := \vac + x \in \mmul$.

The \emph{toy Fock space over $\mul$} is
$\bebe := \bigotimes_{n = 0}^\infty \mmul_{(n)}$, where
$\mmul_{(n)} := \mmul$ for all $n \in \Z_+$, with respect to the
stabilising sequence $(\vac_{(n)} := \vac)_{n = 0}^\infty$; the suffix
$(n)$ is used to indicate the relevant copy of~$\mmul$. Note that
$\bebe = \bebe_{n)} \otimes \bebe_{[n}$, where
$\bebe_{n)} := \bigotimes_{m = 0}^{n - 1}\mmul_{(m)}$ and
$\bebe_{[n} := \bigotimes_{m = n}^\infty\mmul_{(m)}$, for all
$n \in \Z_+$.
\end{definition}

\begin{notation}
Let $\fock = \fock_+\bigl( \elltwo \bigr)$ be the Boson Fock space
over $\elltwo$, the \mbox{Hilbert} space of square-integrable $\mul$-valued
functions on the half line. Recall that~$\fock$ may be considered as
the completion of $\evecs$, the linear span of
exponential vectors~$\evec{f}$ labelled by~$f \in \elltwo$,
with respect to the inner product
\[
\langle \evec{f}, \evec{g} \rangle := \exp\Bigl( %
\int_0^\infty \langle f( t ), g( t ) \rangle \std t %
\Bigr) \qquad \text{for all } f, g \in \elltwo.
\]
\end{notation}

\begin{proposition}
For all $\tau > 0$ there is a unique co-isometry
$D_\tau : \fock \to \bebe$ such that
\[
D_\tau \evec{f} = \bigotimes_{n = 0}^\infty \widehat{f( n ; \tau )}, %
\quad \text{where} \quad %
f( n ; \tau ) := %
\tau^{-1 / 2} \int_{n \tau}^{(n + 1) \tau} f( t )\std t,
\]
for all $f \in \elltwo$. Furthermore, $D_\tau^* D_\tau \to \id_\fock$
strongly as $\tau \to 0$.
\end{proposition}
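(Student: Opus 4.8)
The plan is to realise $D_\tau$ as an infinite tensor product of ``one-block'' co-isometries. Partition $\R_+$ into the intervals $I_n := [ n\tau, (n + 1)\tau )$ and use the continuous-tensor-product factorisation of Boson Fock space, $\fock = \bigotimes_{n = 0}^\infty \fock_n$ with $\fock_n := \fock_+\bigl( L^2( I_n; \mul ) \bigr)$ and stabilising sequence of vacuum vectors, under which $\evec{f} = \bigotimes_{n = 0}^\infty \evec{f \indf{I_n}}$. For each $n$ let $\iota_n : \mmul_{(n)} \to \fock_n$ send $\vac$ to the vacuum $\evec{0}$ and each $x \in \mul$ to the one-particle vector with wavefunction $\tau^{-1 / 2} \indf{I_n} \otimes x$; since $\| \tau^{-1 / 2} \indf{I_n} \otimes x \|_{L^2} = \| x \|$ and the vacuum is orthogonal to the first chaos, $\iota_n$ is isometric. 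As $\iota_n$ maps the stabilising vector $\vac$ to the stabilising vector $\evec{0}$ and $\| \iota_n^* \| = 1$, the tensor product $D_\tau := \bigotimes_{n = 0}^\infty \iota_n^*$ is a well-defined co-isometry from $\fock$ to $\bebe$, a tensor product of co-isometries respecting the stabilising sequences being again a co-isometry.

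To identify the action on exponential vectors I would compute $\iota_n^* \evec{ f \indf{I_n} }$ in the decomposition $\mmul = \C\vac \oplus \mul$. Testing against $\vac$ gives $\langle \vac, \iota_n^* \evec{f \indf{I_n}} \rangle = \langle \evec{0}, \evec{f \indf{I_n}} \rangle = 1$, while testing against $x \in \mul$ and using that the one-particle component of an exponential vector $\evec{g}$ is $g$ gives $\langle x, \iota_n^* \evec{f \indf{I_n}} \rangle = \tau^{-1 / 2} \int_{I_n} \langle x, f( t ) \rangle \std t = \langle x, f( n; \tau ) \rangle$. Hence $\iota_n^* \evec{f \indf{I_n}} = \widehat{f( n; \tau )}$ and $D_\tau \evec{f} = \bigotimes_{n = 0}^\infty \widehat{f( n; \tau )}$, as required; uniqueness is immediate, the exponential vectors being total in $\fock$ and $D_\tau$ bounded.

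For the limit, note first that $Q_\tau := D_\tau^* D_\tau$ is an orthogonal projection, because $D_\tau D_\tau^* = \id_\bebe$; thus $\| Q_\tau \| \le 1$ uniformly, and it suffices to prove $Q_\tau \evec{f} \to \evec{f}$ on the total set of exponential vectors. Since $Q_\tau$ is a projection, $\| Q_\tau \evec{f} - \evec{f} \|^2 = \| \evec{f} \|^2 - \| D_\tau \evec{f} \|^2$, and the two norms on the right equal, respectively, $\exp\bigl( \| f \|^2 \bigr)$ and $\prod_{n = 0}^\infty \bigl( 1 + \| f( n; \tau ) \|^2 \bigr)$, the latter because $\vac \perp f( n; \tau )$. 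It remains to show that this product tends to $\exp\bigl( \| f \|^2 \bigr)$ as $\tau \to 0$.

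Writing $a_n := \| f( n; \tau ) \|^2$, I would establish the two analytic inputs $\sum_n a_n \to \| f \|^2$ and $\max_n a_n \to 0$. The first holds because $\sum_n a_n = \| E_\tau f \|^2$, where $E_\tau$ is the orthogonal projection of $\elltwo$ onto functions constant on each $I_n$, and $E_\tau \to \id$ strongly as $\tau \to 0$ (which follows from the density of step functions and the uniform bound $\| E_\tau \| \le 1$). The second follows from the Cauchy--Schwarz estimate $a_n \le \int_{I_n} \| f( t ) \|^2 \std t$ together with absolute continuity of the integral. Then $\sum_n a_n^2 \le ( \max_n a_n ) \sum_n a_n \to 0$, and the elementary bounds $\sum_n a_n - \tfrac12 \sum_n a_n^2 \le \sum_n \log( 1 + a_n ) \le \sum_n a_n$ squeeze $\prod_n ( 1 + a_n )$ between $\exp\bigl( \sum_n a_n - \tfrac12 \sum_n a_n^2 \bigr)$ and $\exp\bigl( \sum_n a_n \bigr)$, both of which tend to $\exp\bigl( \| f \|^2 \bigr)$. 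The main obstacle is this last step: the product $\prod_n ( 1 + a_n )$ is genuinely different from $\exp\bigl( \sum_n a_n \bigr)$ --- it is precisely this discrepancy that prevents $D_\tau$ from being an isometry --- and controlling it requires the uniform smallness $\max_n a_n \to 0$ in addition to the $L^2$-convergence of the block averages.
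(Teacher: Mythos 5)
Your proof is correct and follows essentially the same route as the paper, which simply cites \cite[Section~2]{Blt08}: there too $D_\tau$ is the adjoint of the isometric embedding of $\bebe$ into the continuous tensor-product factorisation $\bigotimes_{n}\fock_+\bigl(L^2([n\tau,(n+1)\tau);\mul)\bigr)$, with the action identified on exponential vectors and the strong convergence of $D_\tau^*D_\tau$ reduced to the scalar limit $\prod_n\bigl(1+\|f(n;\tau)\|^2\bigr)\to\exp\bigl(\|f\|^2\bigr)$. Your handling of that limit --- via $\sum_n a_n=\|E_\tau f\|^2\to\|f\|^2$ together with $\max_n a_n\to 0$ from Cauchy--Schwarz and absolute continuity of the integral --- is exactly the required argument, correctly identifying the point where mere convergence of $\sum_n a_n$ would not suffice.
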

\begin{proof}
See \cite[Section~2]{Blt08}.
\end{proof}

\subsection{Matrix spaces}

For more detail on the topics of this subsection and the next,
see~\cite{Lin05}.

Henceforth $\opsp$ is a fixed concrete operator space, \ie a
norm-closed subspace of~$\bop{\ini}{}$, where~$\ini$ is a Hilbert
space.

\begin{definition}
For a Hilbert space $\hilb$, the \emph{matrix space}
\[
\opsp \matten \bop{\hilb}{} := %
\{ T \in \bop{\ini \otimes \hilb}{} : %
E^x T E_y \in \opsp \text{ for all } x, y \in \hilb \}
\]
is an operator space, where
$E^x \in \bop{\ini \otimes \hilb}{\ini}$ is the adjoint of
\[
E_x : \ini \to \ini \otimes \hilb; \ u \mapsto u \otimes x.
\]
Note that
$\opsp \otimes \bop{\hilb}{} \subseteq \opsp \matten \bop{\hilb}{} %
\subseteq \opsp \uwkten \bop{\hilb}{}$, with the latter an equality if
$\opsp$ is ultraweakly closed, and
$\bigl( \opsp \matten \bop{\hilb_1}{} \bigr) %
\matten \bop{\hilb_2}{} = %
\opsp \matten \bop{\hilb_1 \otimes \hilb_2}{}$.
\end{definition}

\begin{definition}
If $\opsq$ is an operator space and $\hilb$ is a non-zero Hilbert
space then a linear map $\phi : \opsp \to \opsq$ is
\emph{$\hilb$~bounded} if $\| \phi \|_\hilbb < \infty$, where
\[
\| \phi \|_\hilbb := \left\{\begin{array}{ll}
( \dim \hilb ) \|\phi\| & \text{if } \dim \hilb < \infty, \\[1ex]
\| \phi \|_\cb & \text{if } \dim \hilb = \infty,
\end{array}\right.
\]
with $\| \cdot \|$ and $\| \cdot \|_\cb$ the operator and completely
bounded norms, respectively. The Banach space of all such
$\hilb$-bounded maps, with norm $\| \cdot \|_\hilbb$, is denoted by
$\kbo{\hilb}{\opsp}{\opsq}$.
\end{definition}

\begin{proposition}
Let $\Phi \in \kbo{\hilb}{\opsp}{\opsq}$.
The unique map
$\Phi \matten \id_{\bop{\hilb}{}} : %
\opsp \matten \bop{\hilb}{} \to \opsq \matten \bop{\hilb}{}$
such that
\[
E^x \bigl( \Phi \matten \id_{\bop{\hilb}{}}( T ) \bigr) E_y = %
\Phi( E^x T E_y ) %
\qquad \text{for all } x, y \in \hilb \text{ and } %
T \in \opsp \matten \bop{\hilb}{}
\]
is the \emph{$\hilb$ lifting of~$\Phi$}. This lifting is linear,
$\hilb$~bounded and such that
$\| \Phi \matten \id_{\bop{\hilb}{}} \| \le \| \Phi \|_\hilbb$; if
$\Phi$ is completely bounded then so is
$\Phi \matten \id_{\bop{\hilb}{}}$, with
$\| \Phi \matten \id_{\bop{\hilb}{}} \|_\cb \le \| \Phi \|_\cb$.
\end{proposition}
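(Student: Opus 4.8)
The plan is to build the lifting entrywise against an orthonormal basis of $\hilb$, treating finite-dimensional $\hilb$ directly and then using the completely bounded norm to reach the general case by a compression argument. Uniqueness comes first and is immediate: if $S_1$ and $S_2$ both satisfy the displayed relation then $E^x ( S_1 - S_2 ) E_y = 0$ for all $x, y \in \hilb$, and since $\lin \{ u \otimes x : u \in \ini,\ x \in \hilb \}$ is dense in $\ini \otimes \hilb$ this forces $S_1 = S_2$. Linearity of $T \mapsto \Phi \matten \id_{\bop{\hilb}{}}( T )$ is then a formal consequence of uniqueness, since $\lambda_1 S_1 + \lambda_2 S_2$ satisfies the defining relation for the lifting of $\lambda_1 T_1 + \lambda_2 T_2$.

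Fix an orthonormal basis $( e_i )_{i \in I}$ of $\hilb$. When $\dim \hilb = n < \infty$ I would define $S$ by the requirement $E^{e_i} S E_{e_j} = \Phi( E^{e_i} T E_{e_j} )$ on basis pairs; expanding $x = \sum_i \alpha_i e_i$ and $y = \sum_j \beta_j e_j$ and using $E^x = \sum_i \overline{\alpha_i} E^{e_i}$, $E_y = \sum_j \beta_j E_{e_j}$ together with the linearity of $\Phi$ shows that $S$ then satisfies the defining relation for all $x, y \in \hilb$, and that each of its entries lies in $\opsq$, so $S \in \opsq \matten \bop{\hilb}{}$. Since $E_{e_j}$ is isometric and $\| E^{e_i} \| = 1$ we have $\| E^{e_i} T E_{e_j} \| \le \| T \|$, and a routine matrix-norm estimate gives $\| S \| \le n \| \Phi \| \| T \| = \| \Phi \|_\hilbb \| T \|$; this settles existence and the operator-norm bound for finite-dimensional $\hilb$.

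The substantive case is $\dim \hilb = \infty$, where $\Phi$ being $\hilb$-bounded means precisely that $\Phi$ is completely bounded. For each finite $F \subseteq I$ let $P_F$ be the orthogonal projection of $\hilb$ onto $\lin \{ e_i : i \in F \}$ and apply the finite-dimensional construction to $( \id_\ini \otimes P_F ) T ( \id_\ini \otimes P_F )$ to obtain $S_F$. The decisive point, and the reason the completely bounded hypothesis is indispensable, is that under the identification $P_F \hilb \cong \C^{|F|}$ the map $\Phi \matten \id_{\bop{P_F \hilb}{}}$ is the matrix amplification $\Phi \otimes \id_{\bop{\C^{|F|}}{}}$, so $\| S_F \| \le \| \Phi \|_\cb \| T \|$ uniformly in $F$, with no factor of $|F|$. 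This uniform bound makes each column family $\{ \Phi( E^{e_i} T E_{e_j} ) u \otimes e_i \}_{i \in I}$ (with $j$ and $u \in \ini$ fixed) square-summable, so $( S_F )$ converges in the strong operator topology to a bounded $S$ with $\| S \| \le \| \Phi \|_\cb \| T \|$; approximating general $x, y$ by $P_F x, P_F y$ and invoking continuity of $\Phi$ then shows $E^x S E_y = \Phi( E^x T E_y )$ for all $x, y$, whence $S \in \opsq \matten \bop{\hilb}{}$. I expect this convergence-with-uniform-control to be the main obstacle: without complete boundedness the compressions are bounded only by $|F| \| \Phi \|$, which diverges.

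It remains to read off the two norm bounds. For $\dim \hilb = \infty$ we have $\| \Phi \|_\hilbb = \| \Phi \|_\cb$, so the previous paragraph already gives $\| \Phi \matten \id_{\bop{\hilb}{}} \| \le \| \Phi \|_\hilbb$. For the completely bounded estimate I would use the consistency relation $( \Phi \matten \id_{\bop{\hilb}{}} ) \matten \id_{\bop{\C^n}{}} = \Phi \matten \id_{\bop{\hilb \otimes \C^n}{}}$, which follows from uniqueness together with the identity $( \opsp \matten \bop{\hilb}{} ) \matten \bop{\C^n}{} = \opsp \matten \bop{\hilb \otimes \C^n}{}$ of the preceding definition; taking the supremum over $n$ and applying the compression bound with $\hilb \otimes \C^n$ in place of $\hilb$ gives $\| \Phi \matten \id_{\bop{\hilb}{}} \|_\cb = \sup_n \| \Phi \matten \id_{\bop{\hilb \otimes \C^n}{}} \| \le \| \Phi \|_\cb$, completing the proof.
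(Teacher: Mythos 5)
The paper offers no proof of this proposition, deferring entirely to \cite{Blt10a}, Theorem~2.5, so there is nothing internal to compare against; your argument is correct and is essentially the standard construction underlying that reference. In particular, the key points are all in place: uniqueness and linearity from totality of elementary tensors, the crude $n \| \Phi \|$ estimate when $\dim \hilb = n < \infty$, the uniformly bounded net of compressions $S_F$ (with the bound $\| S_F \| \le \| \Phi \|_\cb \| T \|$ coming from identifying the compressed lifting with the matrix amplification $\Phi^{( |F| )}$) converging strongly when $\dim \hilb = \infty$, and the completely bounded estimate via the associativity $\bigl( \Phi \matten \id_{\bop{\hilb}{}} \bigr) \matten \id_{\bop{\C^n}{}} = \Phi \matten \id_{\bop{\hilb \otimes \C^n}{}}$, which is exactly where the hypothesis $\| \Phi \|_\hilbb < \infty$ is needed.
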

\begin{proof}
See \cite[Theorem~2.5]{Blt10a}.
\end{proof}

\begin{proposition}\label{prp:qrw}
Let
$\Phi \in %
\hilb\bopp\bigl( \opsp; \opsp \matten \bop{\hilb}{} \bigr)$.
There exists a unique family of maps
$\Phi^{(n)} : \opsp \to \opsp \matten \bop{\hilb^{\otimes n}}{}$
indexed by $n \in \Z_+$, the \emph{quantum random walk} with
\emph{generator} $\Phi$, such that $\Phi^{(0)} = \id_\opsp$ and
\[
E^x \Phi^{(n + 1)}( a ) E_y = \Phi^{(n)}( E^x \Phi( a ) E_y ) %
\qquad \text{for all } x, y \in \hilb, \, a \in \opsp %
\text{ and } n \in \Z_+.
\]
Each map is linear, $\hilb$~bounded and such that
$\| \Phi^{(n)} \|_\hilbb \le \| \Phi \|_\hilbb^n$ for all $n \ge 1$;
if $\Phi$ is completely bounded then so is $\Phi^{(n)}$, with
$\| \Phi^{(n)} \|_\cb \le \| \Phi \|_\cb^n$ for all $n \in \Z_+$.
\end{proposition}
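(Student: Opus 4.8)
The plan is to construct the family $\Phi^{(n)}$ by induction on $n$, then establish the three asserted properties (well-definedness/linearity, the norm bound, and the completely-bounded refinement) in turn.

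First I would address existence and uniqueness. The base case $\Phi^{(0)} = \id_\opsp$ is given. For the inductive step, suppose $\Phi^{(n)} : \opsp \to \opsp \matten \bop{\hilb^{\otimes n}}{}$ has been constructed. The defining relation
\[
E^x \Phi^{(n + 1)}( a ) E_y = \Phi^{(n)}( E^x \Phi( a ) E_y )
\qquad \text{for all } x, y \in \hilb, \, a \in \opsp
\]
prescribes all the matrix entries of $\Phi^{(n+1)}(a)$, so uniqueness is immediate once we know such an operator exists in $\opsp \matten \bop{\hilb^{\otimes(n+1)}}{}$. For existence I would invoke the lifting construction of the preceding proposition: note that $\Phi$ maps into $\opsp \matten \bop{\hilb}{}$, and the $\hilb$-lifting $\Phi^{(n)} \matten \id_{\bop{\hilb}{}}$ sends $\opsp \matten \bop{\hilb}{}$ into $\bigl(\opsp \matten \bop{\hilb^{\otimes n}}{}\bigr) \matten \bop{\hilb}{}$. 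The associativity identity $\bigl( \opsp \matten \bop{\hilb^{\otimes n}}{} \bigr) \matten \bop{\hilb}{} = \opsp \matten \bop{\hilb^{\otimes(n+1)}}{}$ from the matrix-space definition lets me set $\Phi^{(n+1)} := \bigl(\Phi^{(n)} \matten \id_{\bop{\hilb}{}}\bigr) \comp \Phi$. I would then verify that this composite satisfies the required matrix-entry relation, using the characterising property $E^x(\Phi^{(n)} \matten \id_{\bop{\hilb}{}}(T)) E_y = \Phi^{(n)}(E^x T E_y)$ of the lifting together with the fact that slicing by $E^x, E_y$ on the last copy of $\hilb$ commutes appropriately with the $n$-fold slicing. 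Linearity is then inherited from linearity of $\Phi$ and of the lifting.

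Next I would prove the norm estimate $\| \Phi^{(n)} \|_\hilbb \le \| \Phi \|_\hilbb^n$. With the recursive formula $\Phi^{(n+1)} = \bigl(\Phi^{(n)} \matten \id_{\bop{\hilb}{}}\bigr) \comp \Phi$ in hand, this follows by induction: $\hilb$-boundedness is submultiplicative under composition, and the lifting proposition gives $\| \Phi^{(n)} \matten \id_{\bop{\hilb}{}} \| \le \| \Phi^{(n)} \|_\hilbb$. Combining these,
\[
\| \Phi^{(n+1)} \|_\hilbb
\le \| \Phi^{(n)} \matten \id_{\bop{\hilb}{}} \|_\hilbb \, \| \Phi \|_\hilbb
\le \| \Phi^{(n)} \|_\hilbb \, \| \Phi \|_\hilbb,
\]
which closes the induction against the inductive hypothesis $\| \Phi^{(n)} \|_\hilbb \le \| \Phi \|_\hilbb^n$; I would need to check carefully that the $\hilb$-bounded norm of the lifting is controlled by $\| \Phi^{(n)} \|_\hilbb$ in both the finite- and infinite-dimensional cases of the definition of $\| \cdot \|_\hilbb$. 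The completely-bounded claim is entirely analogous, using instead the estimate $\| \Phi^{(n)} \matten \id_{\bop{\hilb}{}} \|_\cb \le \| \Phi^{(n)} \|_\cb$ from the same proposition, so that $\| \Phi^{(n)} \|_\cb \le \| \Phi \|_\cb^n$.

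The main obstacle I anticipate is purely bookkeeping rather than conceptual: one must keep careful track of which copies of $\hilb$ are being sliced at each stage, and verify that the associativity identification $\bigl(\opsp \matten \bop{\hilb^{\otimes n}}{}\bigr) \matten \bop{\hilb}{} = \opsp \matten \bop{\hilb^{\otimes(n+1)}}{}$ is compatible with the order in which the defining relation contracts the entries. In particular, the relation as stated slices a single extra copy of $\hilb$ at each step, so I would want to confirm that iterating it $n$ times yields the correct contraction of $\Phi^{(n)}(a)$ against an elementary tensor $x_1 \otimes \cdots \otimes x_n$, matching the lifting construction. Once this identification is made precise, the inductive proofs of the norm bounds are routine applications of the previous proposition.
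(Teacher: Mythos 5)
Your proposal is correct and is essentially the argument behind the cited result: the paper itself only refers to \cite[Theorem~2.7]{Blt10a}, where the walk is built by exactly this iterated-lifting recursion, with the defining matrix-entry relation following from the characterising property of the lifting and the identification $\bigl(\opsp \matten \bop{\hilb^{\otimes n}}{}\bigr) \matten \bop{\hilb}{} = \opsp \matten \bop{\hilb^{\otimes(n+1)}}{}$. The norm bookkeeping you flag does close in both cases (multiply through by $\dim\hilb$ in the finite-dimensional case, use the completely bounded estimate in the infinite-dimensional case), so no gap remains.
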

\begin{proof}
See \cite[Theorem~2.7]{Blt10a}.
\end{proof}

\subsection{Quantum stochastic cocycles}

\begin{definition}
An \emph{$\ini$ process} $X$ is a family $( X_t )_{t \in \R_+}$ of
linear operators in $\ini \otimes \fock$, such that the domain of each
operator contains $\ini \algten \evecs$ and the map
$t \mapsto X_t u \evec{f}$ is weakly measurable for all $u \in \ini$
and $f \in \elltwo$; this process is \emph{adapted} if
\[
\langle u \evec{f}, X_t v \evec{g} \rangle = %
\langle u \evec{\indf{[ 0, t )} f}, %
X_t v \evec{\indf{[ 0, t )} g} \rangle %
\langle \evec{\indf{[ t, \infty )} f}, %
\evec{\indf{[ t, \infty )} g} \rangle
\]
for all $u$,~$v \in \ini$, $f$,~$g \in \elltwo$ and $t \in \R_+$. (As
is conventional, the tensor-product sign is omitted between elements
of $\ini$ and exponential vectors.)

A \emph{mapping process} $j$ is a family
$\bigl( j_\cdot( a ) \bigr)_{a \in \opsp}$ of $\ini$ processes such
that the map $a \mapsto j_t( a )$ is linear for all $t \in \R_+$; this
process is \emph{adapted} if each $\ini$~process $j_\cdot( a )$ is,
and is \emph{strongly regular} if
\[
j_t( \cdot ) E_{\evec{f}} \in %
\bopp\bigl( \opsp; \bop{\ini}{\ini \otimes \fock} \bigr) %
\qquad \text{for all } f \in \elltwo \text{ and } t \in \R_+,
\]
with norm locally uniformly bounded as a function of $t$.
\end{definition}

\begin{theorem}
Let
$\psi \in %
\mmul\bopp\bigl( \opsp; \opsp \matten \bop{\mmul}{} \bigr)$.
There exists a unique strongly regular adapted mapping process
$j^\psi$, the \emph{quantum stochastic cocycle generated by~$\psi$},
such that
\begin{equation}\label{eqn:qsdeip}
\langle u \evec{f}, ( j^\psi_t( a ) - %
a \otimes \id_\fock ) v \evec{g} \rangle = %
\int_0^t \langle u \evec{f}, j^\psi_s( E^{\widehat{f( s )}} \psi( a ) %
E_{\widehat{g( s )}}) v \evec{g} \rangle \std s
\end{equation}
for all $u$,~$v \in \ini$, $f$,~$g \in \elltwo$, $a \in \opsp$ and
$t \in \R_+$. The process $j^\psi$ has the Feller property, in the
sense that $E^{\evec{f}} j^\psi_t( a ) E_{\evec{g}} \in \opsp$ for all
$f$,~$g \in \elltwo$, $a \in \opsp$ and $t \in \R_+$. If $\psi$ is
completely bounded then so is $j^\psi_t( \cdot ) E_{\evec{f}}$, for
all $f \in \elltwo$ and $t \in \R_+$.
\end{theorem}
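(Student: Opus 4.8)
The plan is to solve the weak integral equation~\eqref{eqn:qsdeip} by Picard iteration, realising $j^\psi$ as the sum of a time-ordered (Dyson) series, and then to check the four asserted properties in turn. Fixing $u$, $v \in \ini$ and $f$, $g \in \elltwo$, I would set $j^{(0)}_t( a ) := a \otimes \id_\fock$ and define the iterates by
\[
\langle u \evec{f}, j^{(n + 1)}_t( a ) v \evec{g} \rangle :=
\langle u \evec{f}, ( a \otimes \id_\fock ) v \evec{g} \rangle +
\int_0^t \langle u \evec{f}, j^{(n)}_s\bigl(
E^{\widehat{f( s )}} \psi( a ) E_{\widehat{g( s )}} \bigr)
v \evec{g} \rangle \std s .
\]
The key observation is that, for each fixed $s$, the slice map $\Theta_s : a \mapsto E^{\widehat{f( s )}} \psi( a ) E_{\widehat{g( s )}}$ sends $\opsp$ into $\opsp$---this is exactly the defining property of the matrix space $\opsp \matten \bop{\mmul}{}$---and satisfies $\| \Theta_s \| \le \| \widehat{f( s )} \| \, \| \widehat{g( s )} \| \, \| \psi \|$. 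Since $f( s )$, $g( s ) \in \mul$ are orthogonal to $\vac$, one has $\| \widehat{f( s )} \|^2 = 1 + \| f( s ) \|^2$, which is locally integrable in~$s$; thus each iterate is well defined and $\opsp$-valued.

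Convergence rests on a simplex estimate. Telescoping the recursion shows that $\langle u \evec{f}, \bigl( j^{(n)}_t( a ) - j^{(n - 1)}_t( a ) \bigr) v \evec{g} \rangle$ is an $n$-fold integral over $\{ 0 < s_n < \cdots < s_1 < t \}$ whose integrand is $\langle u, ( \Theta_{s_n} \comp \cdots \comp \Theta_{s_1} )( a ) v \rangle \langle \evec{f}, \evec{g} \rangle$, so its modulus is at most
\[
\| u \| \, \| v \| \, \| \evec{f} \| \, \| \evec{g} \| \,
\| \psi \|^n \, \| a \| \, \frac{1}{n!}
\Bigl( \int_0^t \| \widehat{f( s )} \| \, \| \widehat{g( s )} \| \std s \Bigr)^n .
\]
By Cauchy--Schwarz the integral is finite, so $\sum_n \bigl( j^{(n)}_t - j^{(n - 1)}_t \bigr)$ converges absolutely at the level of matrix elements, locally uniformly in~$t$. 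I would define $j^\psi_t( a )$ as the limit, verify by dominated convergence that it satisfies~\eqref{eqn:qsdeip}, and note that the weak measurability of $t \mapsto j^\psi_t( a ) u \evec{f}$ passes to the limit.

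The structural properties follow next. Adaptedness is inherited: each $j^{(n)}_\cdot( a )$ is adapted because the generator acts through the present values $f( s )$, $g( s )$, so the factorisation identity is stable under the recursion and survives in the limit. The Feller property $E^{\evec{f}} j^\psi_t( a ) E_{\evec{g}} \in \opsp$ holds because each iterate enjoys it---its matrix elements are built from slices landing in~$\opsp$---and $\opsp$ is norm-closed. For uniqueness, any two strongly regular adapted solutions have difference $k_\cdot$ satisfying the homogeneous form of~\eqref{eqn:qsdeip}; iterating $N$~times and invoking the locally uniform norm bound furnished by strong regularity bounds $| \langle u \evec{f}, k_t( a ) v \evec{g} \rangle |$ by a quantity of order $1 / N!$, which forces $k_t = 0$.

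The \emph{main obstacle} is strong regularity itself: upgrading the sesquilinear-form bounds above to a genuine operator bound on $j^\psi_t( \cdot ) E_{\evec{f}}$, as an element of $\bopp\bigl( \opsp; \bop{\ini}{\ini \otimes \fock} \bigr)$ with norm locally uniform in~$t$. This is precisely where the hypothesis of $\mmul$-boundedness, rather than mere boundedness, is indispensable: applying the $\mmul$-lifting to pass from $\psi$ to $\psi \matten \id_{\bop{\mmul}{}}$ allows the preceding estimates to be run at the amplified level, controlling $\| j^\psi_t( a ) E_{\evec{f}} v \|$ uniformly in terms of $\| \psi \|_\mmulb$ and $\| a \|$ rather than merely controlling individual matrix elements. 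Running the same amplified estimate in completely bounded norm, and using that lifting does not increase the completely bounded norm, yields the complete boundedness of $j^\psi_t( \cdot ) E_{\evec{f}}$ when $\psi$ is completely bounded.
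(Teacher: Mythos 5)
The paper offers no proof of this theorem: it is quoted from Lindsay and Wills \cite{LiW01}, and your Picard--Dyson iteration is in essence the route taken there. The form-level recursion, the simplex estimate with its $1/n!$ factor, the observation that $a \mapsto E^{\widehat{f(s)}}\psi(a)E_{\widehat{g(s)}}$ maps $\opsp$ into $\opsp$ (which, with norm-closedness of $\opsp$, yields the Feller property), the inheritance of adaptedness, and the Gronwall-style uniqueness argument are all sound in outline.

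The genuine gap is the one you flag but do not close: the passage from matrix-element bounds to operators. Your bound on the $n$-th increment, once summed over $n$, has the form $C(f,g)\,\|u\|\,\|v\|\,\|a\|$ with $C(f,g)$ exponential in $\int_0^t \|\widehat{f(s)}\|\,\|\widehat{g(s)}\|\std s$; the dependence on $f$ is not through $\|\evec{f}\|$ alone, so the functional $u\evec{f} \mapsto \overline{\langle u\evec{f}, j_t(a)v\evec{g}\rangle}$ is not shown to be bounded on $\ini \otimes \fock$, Riesz representation cannot be invoked to produce the vector $j_t(a)v\evec{g}$, and the operator $j^\psi_t(a)$ on $\ini \algten \evecs$ is therefore not yet defined --- what you have constructed is a family of sesquilinear forms satisfying~\eqref{eqn:qsdeip}, not a mapping process. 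Closing this requires realising the $n$-th iterate as an $n$-fold quantum stochastic integral whose integrand is the iterated lifting $\psi^{(n)} := \bigl(\psi \matten \id_{\bop{\mmul^{\otimes (n-1)}}{}}\bigr) \comp \cdots \comp \psi : \opsp \to \opsp \matten \bop{\mmul^{\otimes n}}{}$, and then controlling $\|j^{(n)}_t(a)\,v\evec{g}\|$ by the fundamental estimate of quantum stochastic calculus together with $\|\psi^{(n)}\| \le \|\psi\|_\mmulb^{\,n}$; this is precisely where $\mmul$-boundedness earns its keep, and it is also what delivers strong regularity and, \emph{mutatis mutandis}, the completely bounded assertion. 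Your final paragraph correctly names this mechanism but supplies neither the operator-valued construction of the iterates nor the fundamental estimate, so as written the argument does not reach the conclusion of the theorem.
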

\begin{proof}
This is a result of Lindsay and Wills \cite{LiW01}.
\end{proof}

\begin{remark}
The fact that \eqref{eqn:qsde} holds is equivalent to saying that the
strongly regular adapted mapping process~$j^\psi$ satisfies the
quantum stochastic differential equation
\begin{equation}\label{eqn:qsde}
\rd j^\psi_t( a ) = j^\psi_t \std \Lambda_{\psi( a )}( t ) \qquad %
\text{for all } t \in \R_+,
\end{equation}
with the initial condition $j^\psi_0( a ) = a \otimes \id_\fock$, for
all $a \in \vna$.
\end{remark}

\begin{definition}
Let $\tau > 0$ and
$\Phi \in %
\mmul\bopp\bigl( \opsp; \opsp \matten \bop{\mmul}{} \bigr)$.
The \emph{embedded random walk} with \emph{generator}~$\Phi$ and
\emph{step size}~$\tau$ is the mapping process $J^{\Phi, \tau}$ such
that
\[
J^{\Phi, \tau}_t( a ) := (\id_\ini \otimes D_\tau)^* %
( \Phi^{(n)}( a ) \otimes \id_{\bebe_{[n}} ) %
( \id_\ini \otimes D_\tau ) \qquad %
\text{if } t \in [ n \tau, ( n + 1 ) \tau )
\]
for all $a \in \opsp$ and $n \in \Z_+$.
\end{definition}

\begin{notation}
Let $\tau > 0$ and
$\Phi \in \mmul\bopp\bigl( \opsp; \opsp \matten \bop{\mmul}{} \bigr)$,
and let $\Delta$ denote the orthogonal projection from
$\ini \otimes \mmul$ onto $\ini \otimes \mul$, with
$\Delta^\perp := \id_{\ini \otimes \mmul} - \Delta$. The modification
\begin{equation}\label{eqn:modif}
\mmodf{\Phi}{\tau} : \opsp \to \opsp \matten \bop{\mmul}{}; \ %
a \mapsto %
( \tau^{-1 / 2} \Delta^\perp + \Delta ) %
( \Phi( a ) - a \otimes \id_\mmul ) %
( \tau^{-1 / 2} \Delta^\perp + \Delta )
\end{equation}
is $\mmul$~bounded, and is completely bounded whenever $\Phi$ is.
\end{notation}

\begin{theorem}\label{thm:qrw}
Let $\tau_n > 0$ and
$\Phi_n$,~$\psi \in %
\mmul\bopp\bigl(\opsp; \opsp \otimes \bop{\mmul}{} \bigr)$
be such that
\[
\tau_n \to 0 \qquad \text{and} \qquad %
\mmodf{\Phi_n}{\tau_n} \matten \id_{\bop{\mmul}{}} \to %
\psi \matten \id_{\bop{\mmul}{}} \text{ strongly},
\]
\ie pointwise in norm, as $n \to \infty$. Then
\begin{equation}\label{eqn:conv}
\lim_{n \to \infty} \sup_{t \in [ 0, T ]} %
\| J^{\Phi_n, \tau_n}_t( a ) E_{\evec{f}} - %
j^\psi_t( a ) E_{\evec{f}} \| = 0 %
\quad \text{for all } a \in \opsp, \, f \in \elltwo %
\text{ and } T \in \R_+.
\end{equation}
If, further, $\| \mmodf{\Phi_n}{\tau_n} - \psi \|_\mmulb \to 0$ as
$n \to \infty$ then
\begin{equation}\label{eqn:sconv}
\lim_{n \to \infty} \sup_{t \in [ 0, T ]} %
\| J^{\Phi_n, \tau_n}_t( \cdot ) E_{\evec{f}} - %
j^\psi_t( \cdot ) E_{\evec{f}} \|_\mmulb = 0 \quad \text{for all } %
f \in \elltwo \text{ and } T \in \R_+;
\end{equation}
when $\Phi_n$ and $\psi$ are completely bounded, the same holds with
$\| \cdot \|_\mmulb$ replaced by~$\| \cdot \|_\cb$.
\end{theorem}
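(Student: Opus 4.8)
The plan is to treat this as a quantum Trotter--Kato theorem: the embedded walk $J^{\Phi_n, \tau_n}$ satisfies a discrete difference equation which, after embedding through the co-isometry $D_{\tau_n}$ and evaluation against exponential vectors, becomes a Riemann-sum discretisation of the integral equation~\eqref{eqn:qsdeip} defining $j^\psi$. First I would unwind the recursion of Proposition~\ref{prp:qrw}: for $t \in [ n \tau_n, ( n + 1 ) \tau_n )$ and $f, g \in \elltwo$, repeated use of $E^x \Phi_n^{(k + 1)}( a ) E_y = \Phi_n^{(k)}( E^x \Phi_n( a ) E_y )$ together with $D_{\tau_n} \evec{g} = \bigotimes_m \widehat{g( m; \tau_n )}$ expresses $E^{\evec{f}} J^{\Phi_n, \tau_n}_t( a ) E_{\evec{g}}$ as a time-ordered sum over the steps $m = 0, \dots, n - 1$ at which the single-step increment $E^{\widehat{f( m; \tau_n )}} ( \Phi_n( a ) - a \otimes \id_\mmul ) E_{\widehat{g( m; \tau_n )}}$ is inserted, the remaining steps contributing only the scalar $1 + \langle f( m; \tau_n ), g( m; \tau_n ) \rangle$ (whose accumulated product reproduces the exponential-vector normalisation).

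The linchpin is the exact identity
\[
E^{\widehat{f( m; \tau_n )}} \bigl( \Phi_n( a ) - a \otimes \id_\mmul \bigr) E_{\widehat{g( m; \tau_n )}} = \tau_n\, E^{\widehat{\bar{f}_m}}\, \mmodf{\Phi_n}{\tau_n}( a )\, E_{\widehat{\bar{g}_m}}, \qquad \bar{f}_m := \tau_n^{-1} \int_{m \tau_n}^{( m + 1 ) \tau_n} f( t ) \std t,
\]
which follows on writing $\Phi_n( a ) - a \otimes \id_\mmul = ( \tau_n^{1 / 2} \Delta^\perp + \Delta )\, \mmodf{\Phi_n}{\tau_n}( a )\, ( \tau_n^{1 / 2} \Delta^\perp + \Delta )$ (the inverse of~\eqref{eqn:modif}) and using $( \tau_n^{1 / 2} \Delta^\perp + \Delta ) E_{\widehat{g( m; \tau_n )}} = \tau_n^{1 / 2} E_{\widehat{\bar{g}_m}}$ together with its adjoint. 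Thus every single-step increment carries exactly one factor $\tau_n$ and a slice of the rescaled generator $\mmodf{\Phi_n}{\tau_n}$ against the interval averages $\bar{f}_m, \bar{g}_m$. Consequently the matrix element of $J^{\Phi_n, \tau_n}_t( a )$ is a discrete Dyson series whose $N$-th term is a time-ordered sum $\sum_{0 \le m_1 < \dots < m_N < n} \tau_n^N ( \cdots )$, the continuous counterpart of which, obtained by iterating~\eqref{eqn:qsdeip}, is the iterated integral $\int_{0 < s_1 < \dots < s_N < t} ( \cdots ) \std s_1 \cdots \std s_N$ with integrand built from $E^{\widehat{f( s )}} \psi( a ) E_{\widehat{g( s )}}$.

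I would then prove convergence of the discrete series to the continuous one, which is a double limit handled term by term: $\mmodf{\Phi_n}{\tau_n} \matten \id \to \psi \matten \id$ supplies convergence of the slice maps, while $\bar{f}_m, \bar{g}_m \to f, g$ in $L^2$ (Lebesgue differentiation, first for continuous $f, g$ and then by density for general $f, g \in \elltwo$) turns the time-ordered Riemann sums into the iterated integrals, and the discrepancy from $D_{\tau_n}^* D_{\tau_n} \ne \id_\fock$ is absorbed using $D_{\tau_n}^* D_{\tau_n} \to \id_\fock$ strongly. The liftings $\matten \id$ in the hypothesis are exactly what is needed, since the $N$-fold terms live in the matrix spaces $\opsp \matten \bop{\mmul^{\otimes k}}{}$; the bound $\| \Phi \matten \id \| \le \| \Phi \|_\mmulb$ and the identity $( \opsp \matten \bop{\mmul}{} ) \matten \bop{\mmul}{} = \opsp \matten \bop{\mmul \otimes \mmul}{}$ let me control them in $\mmul$-bounded norm. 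To interchange the limit with the infinite sum I need a uniform majorant: putting $C := \sup_n \| \mmodf{\Phi_n}{\tau_n} \|_\mmulb$, finite by the uniform boundedness principle applied to the convergent liftings, the time ordering gives $\sum_{m_1 < \dots < m_N < n} \tau_n^N \le T^N / N!$, so the $N$-th term is dominated by $( C_{f, g} T )^N / N!$ uniformly on $[ 0, T ]$ (the averaged exponential-vector factors being controlled by $\| \evec{f} \|$, $\| \evec{g} \|$). This yields the stability estimate $\sup_{t \le T,\, n} \| J^{\Phi_n, \tau_n}_t( \cdot ) E_{\evec{f}} \| < \infty$.

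Combining uniform domination with term-wise convergence gives~\eqref{eqn:conv}, uniformly in $t \in [ 0, T ]$ since the Dyson tail is uniformly small; the stronger conclusion~\eqref{eqn:sconv} follows by running the identical argument with every quantity measured in $\| \cdot \|_\mmulb$ from the outset, the completely-bounded case being verbatim on replacing $\| \cdot \|_\mmulb$ by $\| \cdot \|_\cb$. I expect the main obstacle to be the stability half of this scheme rather than the consistency half: the single-step map $\Phi_n$ differs from the identity by an $O( 1 )$ amount in the number--gauge ($\Delta$--$\Delta$) corner, so the naive submultiplicative bound $\| \Phi_n \|_\mmulb^{\,t / \tau_n}$ would diverge and is useless. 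The factorial suppression from the time ordering $\sum_{m_1 < \dots < m_N} \tau_n^N \approx t^N / N!$ is precisely what tames this, and making that gain rigorous---together with the simultaneous passage to the limit in the discretisations of $f$, $g$ and in the generators---is the technical heart of the proof.
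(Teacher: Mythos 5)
Your overall architecture --- a discrete Duhamel/Dyson expansion of the walk, the exact rescaling identity, term-wise consistency, and a uniform majorant for stability --- is the right shape, and your linchpin identity is correct: since $( \tau^{-1/2} \Delta^\perp + \Delta )( \tau^{1/2} \Delta^\perp + \Delta ) = \id_{\ini \otimes \mmul}$, one has $\Phi_n( a ) - a \otimes \id_\mmul = ( \tau_n^{1/2} \Delta^\perp + \Delta )\, \mmodf{\Phi_n}{\tau_n}( a )\, ( \tau_n^{1/2} \Delta^\perp + \Delta )$ and $( \tau_n^{1/2} \Delta^\perp + \Delta ) E_{\widehat{g( m; \tau_n )}} = \tau_n^{1/2} E_{\widehat{\bar{g}_m}}$, so an increment sandwiched between \emph{two} exponential vectors picks up a full factor $\tau_n$. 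Note, though, that the paper offers no proof of its own here: it cites Theorem~7.6 of the author's earlier article, whose argument is organised around semigroup decompositions for step functions and a separately established stability lemma rather than a term-by-term Dyson series, so your route is in any case closer to Attal--Pautrat's original method.

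The genuine gap is in the stability half, exactly where you anticipated trouble, but the problem is not the one you flag. The conclusion \eqref{eqn:conv} asserts convergence of $J^{\Phi_n, \tau_n}_t( a ) E_{\evec{f}}$ in the norm of $\bop{\ini}{\ini \otimes \fock}$, i.e.\ of the vectors $J^{\Phi_n, \tau_n}_t( a ) u \evec{f}$, not of the matrix elements $E^{\evec{g}} J^{\Phi_n, \tau_n}_t( a ) E_{\evec{f}}$. Your factorial majorant is computed with exponential vectors on both sides, where each insertion contributes $\tau_n \| \widehat{\bar{f}_m} \| \, \| \widehat{\bar{g}_m} \|$ and the time-ordered sum is $O( K^N / N! )$. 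With only one exponential vector present, the inserted factor is $( \tau_n^{1/2} \Delta^\perp + \Delta )\, \mmodf{\Phi_n}{\tau_n}( \,\cdot\, )\, \tau_n^{1/2} E_{\widehat{\bar{g}_m}}$, whose norm is only $O( \tau_n^{1/2} )$, so the triangle-inequality bound on the $N$-th term of the one-sided expansion is of order $C^N \bigl( \sum_m \tau_n^{1/2} \| \widehat{\bar{g}_m} \| \bigr)^N / N! \ge C^N ( t \tau_n^{-1/2} )^N / N!$, which is useless as $\tau_n \to 0$. Thus your majorant yields neither $\sup_{t \le T, n} \| J^{\Phi_n, \tau_n}_t( \cdot ) E_{\evec{f}} \| < \infty$ nor summability of the vector-valued series; and matrix-element convergence does not upgrade to norm convergence of the vectors, since it only gives weak convergence of $J^{\Phi_n, \tau_n}_t( a ) v \evec{g}$ in $\ini \otimes \fock$. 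Closing the gap requires an $\ell^2$ rather than $\ell^1$ estimate: terms with distinct insertion patterns are nearly mutually orthogonal in $\ini \otimes \bebe$ (an inserted slot carries a $\mul_{(m)}$-component of size $O( 1 )$ and a $\vac_{(m)}$-component of size $O( \tau_n^{1/2} )$, while a non-inserted slot carries $\widehat{g( m; \tau_n )} = \vac + O( \tau_n^{1/2} )$), or equivalently a discrete Gronwall estimate for $\| \Phi_n^{(k)}( a ) ( \id_\ini \otimes D_{\tau_n} ) v \evec{g} \|^2$. That orthogonality/Gronwall step is the substantive content of the cited proof and is absent from your proposal.
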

\begin{proof}
See \cite[Theorem~7.6]{Blt10a}.
\end{proof}

\begin{notation}
For brevity, the conclusion \eqref{eqn:conv} will be denoted by
$J^{\Phi, \tau}\to j^\psi$; the stronger conclusion \eqref{eqn:sconv}
will be denoted by $J^{\Phi, \tau} \ttommb j^\psi$, or by
$J^{\Phi, \tau} \ttocb j^\psi$ if the completely bounded version
holds.
\end{notation}

\begin{remark}
If $\hilb$ is infinite dimensional and
$\phi_n \in \kbo{\hilb}{\opsp}{\opsq}$ then
$\phi_n \matten \id_{\bop{\hilb}{}} \to 0$ strongly if and only if
$\| \phi_n \|_\cb \to 0$ \cite[Lemma~2.13]{Blt10a}.
\end{remark}

\section{A concrete GNS representation}\label{sec:concrete}

\begin{definition}
If $\hilb$ is a Hilbert space then $\conj{\hilb}$ denotes the Hilbert
space \emph{conjugate} to $\hilb$; thus
$\conj{\hilb} := \{ \conj{u} : u \in \hilb \}$, with
\[
\conj{u} + \conj{v} := \conj{( u + v )}, \quad
\lambda \conj{u} := \conj{( \overline{\lambda} u )} \quad
\text{and} \quad %
\langle \conj{u}, \conj{v} \rangle := %
\langle v, u \rangle
\]
for all $u$,~$v \in \hilb$ and $\lambda \in \C$. Note that the map
\[
\conj{\phantom{M}} : \bop{\hilb}{} \to \bop{\conj{\hilb}}{}; \ %
\conj{T} ( \conj{u} ) := \conj{( T u )} %
\qquad \text{for all } T \in \bop{\hilb}{} \text{ and } u \in \hilb
\]
is anti-linear and isometric, and it commutes with the adjoint.
\end{definition}

\begin{notation}
If $\hilb$ is a Hilbert space then $\hso{\hilb}$ is the Hilbert space
of Hilbert--Schmidt operators on $\hilb$, with inner product
$\langle S, T \rangle := \tr( S^* T )$ where $\tr$ is the standard
trace on $\bop{\hilb}{}$. Recall that $\hso{\hilb}{}$ is a two-sided
$*$-ideal in the $*$-algebra $\bop{\hilb}{}$.
\end{notation}

\begin{proposition}\label{prp:iso}
The isometric isomorphism
$U_\hilb : \hso{\hilb} \to \hilb \otimes \conj{\hilb}$ determined by
the requirement that
$U_\hilb( \dyad{u}{v} ) = u \otimes \conj{v}$ for all $u$,
$v \in \hilb$ is such that
\begin{equation}
U_\hilb( X T Y^* ) = ( X \otimes \conj{Y} ) U_\hilb( T ) %
\qquad \text{for all } T \in \hso{\hilb} %
\text{ and } X, Y \in \bop{\hilb}{}.
\end{equation}
\end{proposition}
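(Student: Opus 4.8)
The plan is to reduce the entire identity to the case of rank-one operators and then extend by density. Recall that the finite-rank operators are dense in $\hso{\hilb}$ in the Hilbert--Schmidt norm, and they are spanned by the dyads $\dyad{u}{v}$; correspondingly, the algebraic tensor product $\hilb \algten \conj{\hilb}$, spanned by the simple tensors $u \otimes \conj{v}$, is dense in $\hilb \otimes \conj{\hilb}$. Since the claimed identity is linear in $T$ and, as I shall note, both of its sides are continuous in $T$, it suffices to verify it when $T = \dyad{u}{v}$.

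First I would confirm that $U_\hilb$ is a well-defined isometric isomorphism, which follows from the computation
\[
\langle \dyad{u}{v}, \dyad{u'}{v'}\rangle
= \tr(\dyad{v}{u}\,\dyad{u'}{v'})
= \langle u, u'\rangle\,\langle v', v\rangle
= \langle u \otimes \conj{v},\, u' \otimes \conj{v'}\rangle,
\]
using $(\dyad{u}{v})^* = \dyad{v}{u}$ for the first step and the conjugate-space inner product $\langle \conj{v}, \conj{v'}\rangle = \langle v', v\rangle$ for the last. Thus $\dyad{u}{v}\mapsto u \otimes \conj{v}$ preserves inner products on the dense span of dyads and maps it onto the dense span of simple tensors, so it extends uniquely to the isometric isomorphism $U_\hilb$.

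The heart of the argument is the behaviour of a dyad under $T \mapsto X T Y^*$. From $\dyad{u}{v}\,w = \langle v, w\rangle u$ one checks directly that $X\dyad{u}{v} = \dyad{Xu}{v}$ and $\dyad{u}{v}\,Y^* = \dyad{u}{Yv}$, whence $X\dyad{u}{v}Y^* = \dyad{Xu}{Yv}$. Applying $U_\hilb$ and then the defining relation $\conj{(Yv)} = \conj{Y}(\conj{v})$ of the conjugation map gives
\[
U_\hilb(X\dyad{u}{v}Y^*)
= Xu \otimes \conj{(Yv)}
= Xu \otimes \conj{Y}\,\conj{v}
= (X \otimes \conj{Y})(u \otimes \conj{v})
= (X \otimes \conj{Y})\,U_\hilb(\dyad{u}{v}),
\]
which is exactly the asserted identity on dyads, hence on their linear span.

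Finally I would extend to general $T$ by continuity. Both $T \mapsto U_\hilb(X T Y^*)$ and $T \mapsto (X \otimes \conj{Y})U_\hilb(T)$ are bounded linear maps on $\hso{\hilb}$: the former because $\hso{\hilb}$ is a two-sided ideal with $\|X T Y^*\| \le \|X\|\,\|Y\|\,\|T\|$ in the Hilbert--Schmidt norm and $U_\hilb$ is isometric, the latter because $X \otimes \conj{Y}$ is bounded on $\hilb \otimes \conj{\hilb}$. Since they agree on the dense subspace of finite-rank operators, they agree throughout $\hso{\hilb}$. The only point needing care is the bookkeeping in the conjugate space, namely tracking the anti-linearity so that $Y^*$ on the ``bra'' side becomes $\conj{Y}$ on the conjugate factor; once the identity $\conj{(Yv)} = \conj{Y}(\conj{v})$ is in hand the rest is routine, so I do not anticipate a genuine obstacle.
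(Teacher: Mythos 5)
Your proof is correct, and since the paper disposes of this proposition with the single line ``This is elementary,'' your argument is exactly the standard elementary verification being alluded to: check the inner-product preservation and the covariance identity on dyads, then extend by linearity and Hilbert--Schmidt-norm continuity using the ideal property $\| X T Y^* \|_2 \le \| X \| \, \| Y \| \, \| T \|_2$. All the sign and anti-linearity bookkeeping (in particular $\conj{( Y v )} = \conj{Y}\, \conj{v}$ and the convention $\langle \conj{v}, \conj{v'} \rangle = \langle v', v \rangle$) is handled consistently with the paper's conventions.
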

\begin{proof}
This is elementary.
\end{proof}

\begin{notation}
Let $\state$ be a normal state on $\bop{\statesp}{}$ with density
matrix $\State \in \bop{\statesp}{}$, so that
\[
\State \ge 0, \quad \State^{1 / 2} \in \hso{\statesp}, \quad %
\| \State^{1 / 2} \|_2 = 1 \quad \text{and} \quad %
\state( X ) = \tr( \State X ) %
\qquad \text{for all } X \in \bop{\statesp}{}.
\]
Let $P_0$ denote the orthogonal projection from $\statesp$ onto
$\statesp_0 := %
\overline{\im \State^{1 / 2}} = ( \ker \State^{1 / 2} )^\perp$, where
$\overline{\vphantom{M}\,\cdot\,}$ denotes norm closure.
\end{notation}

\begin{proposition}\label{prp:concrete}
Let $\mmul := \statesp \otimes \conj{\statesp_0}$. The injective
normal unital $*$-homomorphism
\[
\pi : \bop{\statesp}{} \to \bop{\mmul}{}; \ %
X \mapsto X \otimes \id_{\conj{\statesp_0}},
\]
the \emph{concrete GNS representation}, has cyclic vector
$\vac := U_\statesp ( \State^{1 / 2} ) \in \mmul$ such that
\[
\langle \vac, \pi( X ) \vac \rangle_\mmul = \state( X ) %
\qquad \text{for all } X \in \bop{\statesp}{}
\]
and
\begin{equation}\label{eqn:stateker}
\state( X P_0 ) = \state( X ) = \state( P_0 X ) %
\qquad \text{for all } X \in \bop{\statesp}{}.
\end{equation}
\end{proposition}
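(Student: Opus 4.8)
The plan is to verify each of the four assertions about the triple $(\mmul,\pi,\vac)$ in turn, treating Proposition~\ref{prp:iso} and the defining properties of $\State$ as the main tools. First I would check that $\pi$ is an injective normal unital $*$-homomorphism. That $X\mapsto X\otimes\id_{\conj{\statesp_0}}$ is a unital $*$-homomorphism is immediate from the functoriality of the ampliation; normality follows because ampliation by a fixed identity is ultraweakly continuous. For injectivity I would observe that $\statesp_0\ne\{0\}$ (as $\State\ne 0$, since $\|\State^{1/2}\|_2=1$), so $\conj{\statesp_0}\ne\{0\}$ and hence $X\otimes\id_{\conj{\statesp_0}}=0$ forces $X=0$.

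The cyclicity and the state identity are the heart of the matter. Set $\vac:=U_\statesp(\State^{1/2})$, which lies in $\mmul=\statesp\otimes\conj{\statesp_0}$ because $\im\State^{1/2}\subseteq\statesp_0$ (indeed $\overline{\im\State^{1/2}}=\statesp_0$), so the range of the rank-one pieces of $\State^{1/2}$ sits in $\statesp\otimes\conj{\statesp_0}$ rather than the larger $\statesp\otimes\conj{\statesp}$. For the state identity I would compute, using the intertwining relation of Proposition~\ref{prp:iso} with $X\mapsto X$ and $Y\mapsto\id$, that
\[
\langle\vac,\pi(X)\vac\rangle_\mmul
=\langle U_\statesp(\State^{1/2}),(X\otimes\id_{\conj{\statesp_0}})U_\statesp(\State^{1/2})\rangle
=\langle\State^{1/2},X\State^{1/2}\rangle_{\hso{\statesp}},
\]
and the last inner product is $\tr\bigl((\State^{1/2})^*X\State^{1/2}\bigr)=\tr(\State X)=\state(X)$, using self-adjointness of $\State^{1/2}$ and the trace property. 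Here I must be slightly careful that the ampliation $X\otimes\id_{\conj{\statesp_0}}$ acts on $U_\statesp(\State^{1/2})$ exactly as left multiplication by $X$ on $\State^{1/2}$ transported through $U_\statesp$; this is the content of Proposition~\ref{prp:iso} restricted to the subspace $\statesp\otimes\conj{\statesp_0}$, so I would note that the relevant $Y^*$ is $P_0$ and that $\State^{1/2}P_0=\State^{1/2}$.

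For the kernel identity \eqref{eqn:stateker} I would argue that $P_0$ is precisely the support projection of $\state$. Since $\statesp_0=(\ker\State^{1/2})^\perp=(\ker\State)^\perp$ and $\State P_0=\State=P_0\State$, multiplying by $\State$ on the left inside the trace gives $\state(XP_0)=\tr(\State XP_0)=\tr(P_0\State X)=\tr(\State X)=\state(X)$, using cyclicity of the trace and $P_0\State=\State$; the identity $\state(X)=\state(P_0X)$ is symmetric. Cyclicity of $\vac$ then follows from the standard fact that $\vac$ implements the faithful restriction $\state_0$ on $\bop{\statesp_0}{}$ together with the observation that $\pi(\bop{\statesp}{})\vac$ already fills $\statesp\otimes\conj{\statesp_0}$, since $U_\statesp$ carries the left ideal $\hso{\statesp}P_0$ onto a dense subspace of $\statesp\otimes\conj{\statesp_0}$ and $\State^{1/2}$ has dense range in $\statesp_0$. \textbf{The main obstacle} I anticipate is bookkeeping the projection $P_0$ correctly: one must consistently track that the conjugate factor is $\conj{\statesp_0}$ rather than $\conj{\statesp}$, and verify that $\vac$ genuinely lands in the smaller space and is cyclic \emph{for the ampliated algebra} despite the truncation. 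Everything else reduces to the trace computation and the elementary support-projection identities.
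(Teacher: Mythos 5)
Your argument is correct and follows essentially the same route as the paper: the intertwining property of $U_\statesp$ together with $\State^{1/2} P_0 = \State^{1/2}$ gives the membership $\vac \in \mmul$ and the trace computation for the state identity, density of $\im \State^{1/2}$ in $\statesp_0$ gives cyclicity, and $\State P_0 = \State = P_0 \State$ gives \eqref{eqn:stateker}. (One small imprecision: membership of $\vac$ in $\statesp \otimes \conj{\statesp_0}$ comes from the bra side of the rank-one pieces, i.e.\ from $\State^{1/2} P_0 = \State^{1/2}$ rather than from $\im \State^{1/2} \subseteq \statesp_0$, but these coincide by self-adjointness of $\State^{1/2}$ and you do invoke the correct identity explicitly.)
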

\begin{proof}
Note that $\State^{1 / 2} = \State^{1 / 2} P_0$, since
$\statesp_0^\perp = \ker \State^{1/2}$, and so, by
Proposition~\ref{prp:iso},
\[
\vac = U_\statesp ( \State^{1 / 2} P_0 ) = %
( \id_\statesp \otimes \conj{P_0} ) U_\statesp %
( \State^{1 / 2} ) \in \statesp \otimes \conj{\statesp_0} = \mmul
\]
and
\[
\langle \vac, \pi( X ) \vac \rangle_\mmul = %
\langle U_\statesp ( \State^{1 / 2} ), %
U_\statesp ( X \State^{1 / 2} ) \rangle_\mmul = %
\tr( \State X ) = \state( X ) %
\qquad \text{for all } X \in \bop{\statesp}{}.
\]
By Proposition~\ref{prp:iso},
\[
\pi( \dyad{u}{v} ) \vac = %
U_\statesp ( \dyad{u}{v} \State^{1 / 2} ) = %
u \otimes \conj{( \State^{1 / 2} v )} \qquad %
\text{for all } u, v \in \statesp,
\]
thus
\[
\{ \pi( X ) \vac : X \in \bop{\statesp}{} \} \supseteq %
\lin \{ u \otimes \conj{( \State^{1 / 2} v )} : %
u, v \in \statesp \} = %
\statesp \algten \conj{(\im \State^{1 / 2})}
\]
and $\vac$ is cyclic for $\pi$. Finally,
\[
\state( P_0 X ) = \tr( \State P_0 X ) = \tr( \State X ) = %
\state( X ) \qquad \text{for all } X \in \bop{\statesp}{}
\]
and, similarly, $\state( X P_0 ) = \state( X )$.
\end{proof}

\begin{notation}
For brevity, let
\[
[ X ] := \pi( X ) \vac = U_\statesp( X \State^{1 / 2} )
\qquad \text{for all } X \in \bop{\statesp}{},
\]
where $U_\statesp$ is as in Proposition~\ref{prp:iso}. Note that
$[ X ] \in \mul := ( \C \vac )^\perp$ if and only if
$X \in \ker \state$.
\end{notation}

\begin{proposition} 
The ampliated representation
\[
\ppi := \id_{\bop{\ini}{}} \uwkten \pi : %
\bop{\ini \otimes \statesp}{} \to \bop{\ini \otimes \mmul}{}; \ %
T \mapsto T \otimes \conj{\id_0}
\]
is an injective normal unital $*$-homomorphism such that
$\ppi\bigl( \opsp \matten \bop{\statesp}{} \bigr) \subseteq %
\opsp \matten \bop{\mmul}{}$. The slice map
\[
\sstate := \id_{\bop{\ini}{}} \uwkten \state : %
\bop{\ini \otimes \statesp}{} \to \bop{\ini}{}
\]
is completely positive, normal and such that
\begin{equation}\label{eqn:homstate}
E^{[ X ]} \ppi( T ) E_{[ Y ]} = %
\sstate\bigl( ( \id_\ini \otimes X )^* T ( \id_\ini \otimes Y ) \bigr)
\qquad \text{for all } T \in \bop{\ini \otimes \statesp}{} %
\text{ and } X, Y \in \bop{\statesp}{};
\end{equation}
in particular,
$\sstate\bigl( \opsp \matten \bop{\statesp}{} \bigr) \subseteq \opsp$.
Furthermore,
\begin{equation}\label{eqn:sstateker}
\sstate\bigl( \wt{P}_0 T \bigr) = \sstate( T ) = %
\sstate\bigl( T \wt{P}_0 \bigr) %
\qquad \text{for all } T \in \bop{\ini \otimes \statesp}{},
\end{equation}
where $\wt{P}_0 := \id_\ini \otimes P_0$ is the orthogonal projection
from $\ini \otimes \statesp$ onto $\ini \otimes \statesp_0$.
\end{proposition}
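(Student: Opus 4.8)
The plan is to reduce the entire statement to the single identity
\[
\sstate( T ) = E^\vac \ppi( T ) E_\vac
\qquad \text{for all } T \in \bop{\ini \otimes \statesp}{},
\]
after first recording the structural properties of $\ppi$ and $\sstate$ that are inherited from $\pi$ and $\state$ by ampliation. For those properties I would note that, by Proposition~\ref{prp:concrete}, $\pi$ is an injective normal unital $*$-homomorphism, so its ampliation $\ppi = \id_{\bop{\ini}{}} \uwkten \pi$ is again one, these features all being preserved by tensoring with the identity on $\bop{\ini}{}$; similarly $\sstate = \id_{\bop{\ini}{}} \uwkten \state$ is completely positive and normal because $\state$ is.

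For the matrix-space inclusion $\ppi\bigl( \opsp \matten \bop{\statesp}{} \bigr) \subseteq \opsp \matten \bop{\mmul}{}$, I would test against elementary tensors: writing $\xi = s \otimes \conj{t}$ and $\eta = s' \otimes \conj{t'}$ with $s, s' \in \statesp$ and $t, t' \in \statesp_0$, and using $\ppi( T ) = T \otimes \id_{\conj{\statesp_0}}$, a direct computation gives
\[
E^\xi \ppi( T ) E_\eta = \langle t', t \rangle\, E^s T E_{s'},
\]
the scalar arising from the order-reversing pairing $\langle \conj{t}, \conj{t'} \rangle = \langle t', t \rangle$ in the conjugate space. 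Since $T \in \opsp \matten \bop{\statesp}{}$ forces $E^s T E_{s'} \in \opsp$, and $\opsp$ is a norm-closed subspace, the membership extends to all $\xi, \eta \in \mmul$ by sesquilinearity and continuity. To establish the central identity, I would observe that both $T \mapsto \sstate( T )$ and $T \mapsto E^\vac \ppi( T ) E_\vac$ are normal linear maps $\bop{\ini \otimes \statesp}{} \to \bop{\ini}{}$, and that on an elementary tensor $a \otimes X$ both return $\state( X )\, a$, the second because $E^\vac \bigl( a \otimes \pi( X ) \bigr) E_\vac = \langle \vac, \pi( X ) \vac \rangle\, a = \state( X )\, a$ by the cyclicity relation of Proposition~\ref{prp:concrete}. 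As such tensors span an ultraweakly dense subspace, the two maps coincide.

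From the identity, \eqref{eqn:homstate} is then formal. Using that $\ppi$ is a $*$-homomorphism and that $( \id_\ini \otimes \pi( Y ) ) E_\vac = E_{\pi( Y ) \vac} = E_{[ Y ]}$, together with its adjoint $E^\vac ( \id_\ini \otimes \pi( X ) )^* = E^{[ X ]}$, I would compute
\[
\sstate\bigl( ( \id_\ini \otimes X )^* T ( \id_\ini \otimes Y ) \bigr) = E^\vac\, \ppi( \id_\ini \otimes X )^*\, \ppi( T )\, \ppi( \id_\ini \otimes Y )\, E_\vac = E^{[ X ]} \ppi( T ) E_{[ Y ]}.
\]
Specialising to $X = Y = \id_\statesp$, for which $[ \id_\statesp ] = \vac$, both recovers the identity and yields $\sstate\bigl( \opsp \matten \bop{\statesp}{} \bigr) \subseteq \opsp$: by the previous paragraph $\ppi$ carries $\opsp \matten \bop{\statesp}{}$ into $\opsp \matten \bop{\mmul}{}$, and the corner $E^\vac ( \cdot ) E_\vac$ of a member of the latter lies in $\opsp$ by definition of the matrix space.

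Finally, for \eqref{eqn:sstateker} I would argue that $\pi( P_0 ) \vac = U_\statesp( P_0 \State^{1 / 2} ) = U_\statesp( \State^{1 / 2} ) = \vac$, since $P_0$ is the projection onto $\overline{\im \State^{1 / 2}}$ and therefore fixes $\State^{1 / 2}$ on both sides; this is the slice-map incarnation of \eqref{eqn:stateker}. Consequently $( \id_\ini \otimes \pi( P_0 ) ) E_\vac = E_\vac$ and, on taking adjoints, $E^\vac ( \id_\ini \otimes \pi( P_0 ) ) = E^\vac$, whence
\[
\sstate( T \wt{P}_0 ) = E^\vac \ppi( T ) ( \id_\ini \otimes \pi( P_0 ) ) E_\vac = E^\vac \ppi( T ) E_\vac = \sstate( T ),
\]
and symmetrically $\sstate( \wt{P}_0 T ) = \sstate( T )$. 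The only genuinely delicate point is the bookkeeping with the conjugate Hilbert space $\conj{\statesp_0}$ — in particular keeping the pairing $\langle \conj{t}, \conj{t'} \rangle = \langle t', t \rangle$ straight — alongside the routine verification that ampliation preserves normality, injectivity, unitality and complete positivity; everything else is formal once the identity $\sstate( T ) = E^\vac \ppi( T ) E_\vac$ is in hand.
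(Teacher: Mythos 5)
Your proposal is correct and follows essentially the same route as the paper: both rest on the identity $\sstate( T ) = E^\vac \ppi( T ) E_\vac$, established on elementary tensors and extended by normality, together with the relation $E_{[ Y ]} = \ppi( \id_\ini \otimes Y ) E_\vac$. Your elementary-tensor verification of the matrix-space inclusion and your derivation of the $\wt{P}_0$-invariance from $\pi( P_0 ) \vac = \vac$ are only cosmetic variants of the paper's inclusion chain and its direct appeal to $\state( X P_0 ) = \state( X ) = \state( P_0 X )$.
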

\begin{proof}
The existence of $\ppi$ and $\sstate$ is standard; see
\cite[Theorem~IV.5.2 \& Proposition~IV.5.13]{Tak79}. Furthermore,
\[
\ppi\bigl( \opsp \matten \bop{\statesp}{} \bigr) \subseteq %
\bigl( \opsp \matten \bop{\statesp}{} \bigr) \algten %
\bop{\conj{\statesp_0}}{} \subseteq %
\opsp \matten \bop{\statesp \otimes \conj{\statesp_0}}{} = %
\opsp \matten \bop{\mmul}{}.
\]
Next, observe that if $a \in \bop{\ini}{}$ and
$X \in \bop{\statesp}{}$ then
\[
E^\vac \ppi( a \otimes X ) E_\vac = %
\langle \vac, \pi( X ) \vac \rangle_\mmul \, a = %
\state( X ) a = \sstate( a \otimes X );
\]
the identity \eqref{eqn:homstate} follows from this, continuity and
the fact that
\[
E_{[ Y ]} = \bigl( \id_\ini \otimes \pi( Y ) \bigr) E_\vac = %
\ppi( \id_\ini \otimes Y ) E_\vac %
\qquad \text{for all } Y \in \bop{\statesp}{}.
\]
The final claim is an immediate consequence of \eqref{eqn:stateker}.
\end{proof}

\begin{notation}
Let $F_0 : \statesp_0 \hookrightarrow \statesp$ be the canonical
embedding, so that $F_0 F_0^* = P_0$, the orthogonal projection from
$\statesp$ onto $\statesp_0$, and $F_0^* F_0 = \id_{\statesp_0}$, the
identity map on $\statesp_0$. The direct-sum decomposition
$\statesp = \statesp_0 \oplus \statesp_0^\perp$ will be used to write
operators as two-by-two matrices.
\end{notation}

\begin{lemma}\label{lem:faithfulstate}
The map
\[
\state_0 : \bop{\statesp_0}{} \to \C; \ %
X \mapsto \state( F_0 X F_0^* ) = %
\state\bigl( \left[\begin{smallmatrix}
 X & 0 \\[0.5ex]
 0 & 0
\end{smallmatrix}\right] \bigr) = %
\tr( F_0^* \State F_0 X )
\]
is a faithful normal state.
\end{lemma}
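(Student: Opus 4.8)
The plan is to exhibit the density matrix of $\state_0$ explicitly, deduce the state axioms and normality from standard functorial properties, and then reduce faithfulness to the injectivity of the compressed density on $\statesp_0$. First I would set $\State_0 := F_0^* \State F_0 \in \bop{\statesp_0}{}$; since $\State$ is positive and trace-class and $F_0$ is bounded, $\State_0$ is a positive trace-class operator, and cyclicity of the trace gives $\state_0( X ) = \tr( F_0^* \State F_0 X ) = \tr( \State_0 X )$ for all $X \in \bop{\statesp_0}{}$. Positivity of $\state_0$ is immediate, either from $\State_0 \ge 0$ or from the fact that $Y \mapsto F_0 Y F_0^*$ is a positive map and $\state$ is a state. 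Unitality follows from $\state_0( \id_{\statesp_0} ) = \state( F_0 F_0^* ) = \state( P_0 ) = 1$, where the last equality is the case $X = \id_\statesp$ of~\eqref{eqn:stateker}. Normality is inherited from that of $\state$ through the ultraweakly continuous map $Y \mapsto F_0 Y F_0^*$.

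The substantive point, and the one I expect to be the crux, is faithfulness. Here I would first establish that $\State_0$ is injective on $\statesp_0$. For $\xi \in \statesp_0$,
\[
\langle \xi, \State_0 \xi \rangle = \langle F_0 \xi, \State F_0 \xi \rangle = \| \State^{1 / 2} F_0 \xi \|^2,
\]
which vanishes exactly when $F_0 \xi \in \ker \State^{1 / 2} = \statesp_0^\perp$; since $\im F_0 = \statesp_0$, this forces $F_0 \xi \in \statesp_0 \cap \statesp_0^\perp = \{ 0 \}$, and hence $\xi = 0$, because $F_0^* F_0 = \id_{\statesp_0}$. Thus $\State_0$ is strictly positive, so its square root $\State_0^{1 / 2}$ is injective and self-adjoint, and therefore has dense range.

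It then remains to pass from injectivity of $\State_0$ to faithfulness of $\state_0$. Given $X \in \bop{\statesp_0}{}$ with $X \ge 0$ and $\state_0( X ) = 0$, I would write $X = X^{1 / 2} X^{1 / 2}$ and use cyclicity of the trace to obtain
\[
0 = \tr( \State_0^{1 / 2} X \State_0^{1 / 2} ) = \| X^{1 / 2} \State_0^{1 / 2} \|_2^2,
\]
so that $X^{1 / 2}$ annihilates $\im \State_0^{1 / 2}$. As this range is dense, continuity yields $X^{1 / 2} = 0$ and hence $X = 0$, which is faithfulness.

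The only genuine bookkeeping to watch is the passage between the compression $F_0^* \State F_0$ and the support projection $P_0$. The identities $\State = \State P_0 = P_0 \State$ (equivalently $\State^{1 / 2} = P_0 \State^{1 / 2}$, which holds because $\statesp_0 = \overline{\im \State^{1 / 2}}$) keep the two descriptions consistent and are what make the unitality and faithfulness computations line up; beyond this, each step is routine.
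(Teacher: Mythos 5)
Your proof is correct, and it reaches faithfulness by a genuinely different route from the paper's. The paper invokes the spectral decomposition $\State = \sum_{j \in J} \lambda_j \dyad{e_j}{e_j}$ of the compact positive operator $\State$, notes that the eigenvectors with $\lambda_j > 0$ form a total orthonormal set in $\statesp_0$, and reads off the explicit formula $\state_0( X ) = \sum_{j \in J} \lambda_j \langle e_j, X e_j \rangle$, from which unitality and faithfulness ($\state_0( X^* X ) = \sum_j \lambda_j \| X e_j \|^2$) are immediate. You instead work basis-free with the compressed density matrix $\State_0 = F_0^* \State F_0$: you prove $\State_0$ is injective by identifying $\ker \State^{1 / 2}$ with $\statesp_0^\perp$, then use cyclicity of the trace and the Hilbert--Schmidt identity $\tr( \State_0^{1 / 2} X \State_0^{1 / 2} ) = \| X^{1 / 2} \State_0^{1 / 2} \|_2^2$ together with density of $\im \State_0^{1 / 2}$. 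Both arguments ultimately rest on the same fact, namely that the support projection of $\State$ is exactly $P_0$; yours avoids the spectral theorem at the cost of slightly more operator-theoretic bookkeeping, whereas the paper's eigenbasis yields a one-line formula that it reuses later (the set $\{ e_j : j \in J \}$ and weights $\lambda_j$ reappear in the proof of Theorem~\ref{thm:hp}). Your handling of positivity, unitality and normality is sound and matches what the paper dismisses as immediate; the identity $\State^{1 / 2} = P_0 \State^{1 / 2}$ that you flag as the key piece of bookkeeping is indeed the same identity the paper records at the start of the proof of Proposition~\ref{prp:concrete}.
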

\begin{proof}
That $\state_0$ is a normal positive linear functional is
immediate. As $\State$ is compact and positive, there exists an
orthonormal set $\{ e_j : j \in J \} \subseteq \statesp$ such that
\[
\State = \sum_{j \in J} \lambda_j \dyad{e_j}{e_j},
\]
where $\lambda_j > 0$ for all $j \in J$ and
$\sum_{j \in J} \lambda_j = 1$. Since $e_j \in \statesp_0$ for all
$j \in J$, it follows that
\[
\state_0( X ) = \tr( \State F_0 X F_0^* ) = %
\sum_{j \in J} \lambda_j %
\langle e_j, F_0 X F_0^* e_j \rangle = %
\sum_{j \in J} \lambda_j \langle e_j, X e_j \rangle %
\qquad \text{for all } X \in \bop{\statesp_0}{};
\]
in particular, $\state_0( \id_{\statesp_0} ) = 1$. Furthermore,
$\{ e_j : j \in J \}$ is total in $\statesp_0$, so if
$X \in \bop{\statesp_0}{}$ then
\[
\state_0( X^* X ) = 0 \iff %
\sum_{j \in J} \lambda_j \| X e_j \|^2 = 0 \iff X = 0.
\qedhere
\]
\end{proof}

\begin{notation}
Fix a conditional expectation $d_0$ from $\bop{\statesp_0}{}$ onto
a $*$-subalgebra $\dialg_0$, and suppose $d_0$ preserves the state
$\state_0$.

By definition,~$d_0$ is a completely positive linear idempotent which
is $\dialg_0$~linear, \ie a module map for the natural
$\dialg_0-\dialg_0$-bimodule structure on $\bop{\statesp_0}{}$.
As $\state_0 \comp d_0 = \state_0$ and $\state_0$ is faithful, it
follows that $d_0$ is ultraweakly continuous, so $\dialg_0$ is a
von~Neumann algebra, and $d_0$ is unital, \ie
$d_0( \id_{\statesp_0} ) = \id_{\statesp_0}$.
\end{notation}

\begin{proposition}\label{prp:condexp}
The ultraweakly continuous map
\[
d : \bop{\statesp}{} \to \bop{\statesp}{}; \ %
X = %
\left[\begin{smallmatrix}
 X_0^0 & X_\times^0 \\[0.5ex]
 X_0^\times & X_\times^\times
\end{smallmatrix}\right] \mapsto %
F_0 d_0( F_0^* X F_0 ) F_0^* = %
\left[\begin{smallmatrix}
 d_0( X_0^0 ) \ & 0 \vphantom{X_0^0} \\[1ex]
 0 \ & 0 \vphantom{X_0^0}
\end{smallmatrix}\right]
\]
is a conditional expectation onto $F_0 \dialg_0 F_0^*$ such that
$\state \comp d = \state$, so the ultraweakly continuous ampliation
\[
\diag := \id_{\bop{\ini}{}} \uwkten d : %
\bop{\ini \otimes \statesp}{} \to \bop{\ini \otimes \statesp}{}
\]
is a conditional expectation onto
$\bop{\ini}{} \uwkten F_0 \dialg_0 F_0^*$ such that
$\sstate \comp \diag = \sstate$ and
\begin{equation}\label{eqn:diag}
\diag\bigl( \opsp \matten \bop{\statesp}{} \bigr) \subseteq %
\opsp \matten \bop{\statesp}{}.
\end{equation}
\end{proposition}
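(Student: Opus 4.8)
The plan is to establish the stated properties of $d$ first, push them through the ampliation $\diag$, and leave the matrix-space invariance \eqref{eqn:diag} until last, since that is where the genuine work lies. For $d$ itself, write $d(X) = F_0 d_0( F_0^* X F_0 ) F_0^*$. Ultraweak continuity is immediate, as $X \mapsto F_0^* X F_0$ and $Y \mapsto F_0 Y F_0^*$ are ultraweakly continuous and $d_0$ is so by hypothesis; complete positivity holds because $d$ is a composition of conjugations (which are completely positive) with the completely positive map $d_0$. Using $F_0^* F_0 = \id_{\statesp_0}$ together with the idempotence, range, unitality and $\dialg_0$-bimodularity of $d_0$, a short computation gives $d \comp d = d$, identifies the range of $d$ as $F_0 \dialg_0 F_0^*$, shows $d$ fixes this set pointwise, and yields $d( b_1 X b_2 ) = b_1 d( X ) b_2$ for $b_1, b_2 \in F_0 \dialg_0 F_0^*$; thus $d$ is a conditional expectation onto $F_0 \dialg_0 F_0^*$ (whose unit is $P_0 = F_0 \id_{\statesp_0} F_0^*$). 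Finally $\state \comp d = \state$ follows from $\state_0 \comp d_0 = \state_0$ and \eqref{eqn:stateker}, since $\state( d( X ) ) = \state_0( F_0^* X F_0 ) = \state( P_0 X P_0 ) = \state( X )$.

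Next I would transfer these facts to $\diag$. As $d$ is normal and completely positive, the ampliation $\diag = \id_{\bop{\ini}{}} \uwkten d$ exists and is normal and completely positive. Idempotence, $\diag \comp \diag = \id_{\bop{\ini}{}} \uwkten ( d \comp d ) = \diag$, together with the range and bimodule identities, holds on elementary tensors and extends by normality, so $\diag$ is a conditional expectation onto $\bop{\ini}{} \uwkten F_0 \dialg_0 F_0^*$. Likewise $\sstate \comp \diag = ( \id_{\bop{\ini}{}} \uwkten \state ) \comp ( \id_{\bop{\ini}{}} \uwkten d ) = \id_{\bop{\ini}{}} \uwkten ( \state \comp d ) = \sstate$, the middle step being the standard fact that a composite of ampliated slice maps is the ampliation of the composite (check on elementary tensors, extend by normality).

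The crux is \eqref{eqn:diag}, and the main obstacle is that $\opsp$ is assumed only \emph{norm}-closed. I must show $E^x \diag( T ) E_y \in \opsp$ for all $T \in \opsp \matten \bop{\statesp}{}$ and $x, y \in \statesp$. For $x, y \in \statesp$ let $\omega_{x,y} \in \bop{\statesp}{}_*$ be the normal functional $b \mapsto \langle x, b y \rangle$; then $E^x S E_y = ( \id_{\bop{\ini}{}} \uwkten \omega_{x,y} )( S )$ for all $S$, and comparison on elementary tensors plus normality gives the Fubini-type identity $( \id_{\bop{\ini}{}} \uwkten \omega_{x,y} ) \comp ( \id_{\bop{\ini}{}} \uwkten d ) = \id_{\bop{\ini}{}} \uwkten ( \omega_{x,y} \comp d )$. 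Hence $E^x \diag( T ) E_y = ( \id_{\bop{\ini}{}} \uwkten \eta )( T )$, where $\eta := \omega_{x,y} \comp d$ is normal because $d$ is. The key point is that a \emph{Kraus-type ultraweak expansion of $d$ would not suffice}, so instead I exploit the predual: every normal functional $\eta$ on $\bop{\statesp}{}$ can be written as $\eta = \sum_n \omega_{x_n,y_n}$ with convergence in the predual norm (factorise the associated trace-class operator). Since $\| ( \id_{\bop{\ini}{}} \uwkten \zeta )( T ) \| \le \| \zeta \| \, \| T \|$ for every normal $\zeta$, the partial sums $\sum_{n \le N} E^{x_n} T E_{y_n}$ converge to $( \id_{\bop{\ini}{}} \uwkten \eta )( T )$ in operator norm. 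Each partial sum is a finite sum of slices of $T$, hence lies in $\opsp$, and as $\opsp$ is norm-closed the limit does too; this gives $E^x \diag( T ) E_y \in \opsp$, which is \eqref{eqn:diag}.
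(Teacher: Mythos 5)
Your proposal is correct and takes essentially the same route as the paper: the conditional-expectation and state-preservation properties of $d$ and $\diag$ are obtained from the module property of $d_0$ and the identity $\state( P_0 X P_0 ) = \state( X )$, and the inclusion $\diag\bigl( \opsp \matten \bop{\statesp}{} \bigr) \subseteq \opsp \matten \bop{\statesp}{}$ follows from the ultraweak continuity of $d$ via the characterisation of the matrix space by slicing with normal functionals. The only difference is that you prove that characterisation directly (predual-norm approximation by sums of vector functionals together with the norm-closedness of $\opsp$), whereas the paper invokes it as a known fact.
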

\begin{proof}{}
The maps $d$ and $\diag$ inherit linearity, idempotency, complete
positivity and ultraweak continuity from~$d_0$; furthermore,
\begin{alignat*}{2}
d\bigl( d( X ) Y \bigr) & = %
F_0 d_0( F_0^* F_0 d_0( F_0^* X F_0 ) F_0^* Y F_0 ) F_0^* \\
 & = F_0 d_0( F_0^* X F_0 ) d_0( F_0^* Y F_0 ) F_0^* & = d( X ) d( Y ) %
\qquad \text{for all } X, Y \in \bop{\statesp}{}
\end{alignat*}
and, using the adjoint, $d\bigl( X d( Y ) \bigr) = d( X ) d( Y )$.
Thus $d$ and $\diag$ are conditional expectations.

To see that states are preserved, let $X \in \bop{\statesp}{}$ and
recall that $d_0$ preserves $\state_0$, so
\[
\state\bigl( d( X ) \bigr) = %
\state_0\bigl( d_0( F_0^* X F_0 ) \bigr) = %
\state_0( F_0^* X F_0 ) = \state( P_0 X P_0 ) = \state( X ),
\]
where the final equality follows from \eqref{eqn:stateker}.

Finally, let $T \in \bop{\ini \otimes \statesp}{}$ and note that
$T \in \opsp \matten \bop{\statesp}{}$ if and only if
$( \id_{\bop{\ini}{}} \uwkten \phi )( T ) \in \opsp$ for every
normal linear functional $\phi$ on $\bop{\statesp}{}$. As $d$
is ultraweakly continuous, the inclusion~\eqref{eqn:diag} follows.
\end{proof}

\section{Walks with an arbitrary normal particle state}%
\label{sec:main}

Throughout this section, $\state$ is a normal state on
$\bop{\statesp}{}$ corresponding to the density matrix $\State$, the
subspace $\statesp_0 = \overline{\im \State^{1 / 2} }$ and $\GNS$ is
the concrete GNS representation of Proposition~\ref{prp:concrete}. 

\begin{notation}
Given $\Phi \in %
\statesp\bopp\bigl( \opsp; \opsp \matten \bop{\statesp}{} \bigr)$, let
$\Phi'( a ) := \Phi( a ) - a \otimes \id_\statesp$ for all
$a \in \opsp$. Recall that $\wt{P}_0$ is the orthogonal projection
from $\ini \otimes \statesp$ onto $\ini \otimes \statesp_0$.
\end{notation}

The following definition gives the correct modification of a generator
for a quantum random walk with particle state $\state$ and conditional
expectation $d_0$.

\begin{definition}
Let $\tau > 0$ and
$\Phi \in %
\statesp\bopp\bigl( \opsp; \opsp \matten \bop{\statesp}{} \bigr)$.
The modification
\begin{align}\label{eqn:modf}
\nmodf{\Phi}{\tau} : \opsp \to \opsp \matten \bop{\statesp}{}; \ %
a \mapsto & \wt{P}_0 %
( \tau^{-1} \diag + \tau^{-1 / 2} \diag^\perp )%
\bigl( \Phi'( a ) \bigr) \wt{P}_0 \nonumber \\
 & + \tau^{-1 / 2} \wt{P}_0 \Phi( a ) \wt{P}_0^\perp + %
\tau^{-1 / 2} \wt{P}_0^\perp \Phi( a ) \wt{P}_0 + %
\wt{P}_0^\perp \Phi'( a ) \wt{P}_0^\perp
\end{align}
is $\statesp$ bounded, and is completely bounded whenever~$\Phi$ is.
\end{definition}

\begin{remark}
The modification \eqref{eqn:modf} acts as follows: on the block
corresponding to $\statesp_0 \times \statesp_0$, the scaling
r\'{e}gime appropriate for a faithful normal state is adopted
\cite{Blt10b}; on the blocks corresponding to
$\statesp_0 \times \statesp_0^\perp$, $\statesp_0^\perp \times
\statesp_0$ and $\statesp_0^\perp \times \statesp_0^\perp$, the
scaling is that used for the vector-state situation,
Theorem~\ref{thm:qrw}, with~$\statesp_0$ playing the r\^{o}le of
$\C \vac$ and $\statesp_0^\perp$ that of $\mul$.

In particular, if $\state$ is faithful then \eqref{eqn:modf} is the
same modification as in \cite[Definition~11]{Blt10b}, whereas if
$\state$ is a vector state then $d_0$ must be the identity map and
the modification is the same as that given in~\eqref{eqn:modif}.
\end{remark}

\begin{lemma}\label{lem:cruc}
Let $\tau > 0$ and
$\Phi \in %
\statesp\bopp\bigl( \opsp; \opsp \matten \bop{\statesp}{} \bigr)$.
Then
\[
( \tau \diag + \tau^{1 / 2} \diag^\perp )%
\bigl( \nmodf{\Phi}{\tau}( a ) \bigr) = %
\Phi'( a ) + %
( \tau^{1 / 2} - 1 ) \wt{P}_0^\perp \Phi'( a ) \wt{P}_0^\perp
\qquad \text{for all } a \in \opsp.
\]
\end{lemma}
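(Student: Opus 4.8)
The plan is to prove the identity by a direct computation: apply the superoperator $R_\tau := \tau \diag + \tau^{1/2} \diag^\perp$ to each of the four summands of the modification \eqref{eqn:modf} separately and add up, exploiting the block structure of the conditional expectation $\diag$. Everything rests on three structural facts, all drawn from Proposition~\ref{prp:condexp}. First, $\diag$ is idempotent, so $\diag \comp \diag = \diag$ and hence $\diag \comp \diag^\perp = \diag^\perp \comp \diag = 0$. Second, the range of $\diag$ lies in $\bop{\ini}{} \uwkten F_0 \dialg_0 F_0^*$, which is contained in the $\statesp_0 \times \statesp_0$ corner; consequently $\diag(T) = \wt P_0 \diag(T) \wt P_0$ and $\diag(T) = \diag(\wt P_0 T \wt P_0)$ for all $T$, so $\diag$ annihilates every operator whose $\statesp_0 \times \statesp_0$ block vanishes. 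Third, since $\wt P_0$ commutes with $a \otimes \id_\statesp$, the operators $\Phi(a)$ and $\Phi'(a)$ share the same off-diagonal blocks, \ie $\wt P_0 \Phi(a) \wt P_0^\perp = \wt P_0 \Phi'(a) \wt P_0^\perp$ and $\wt P_0^\perp \Phi(a) \wt P_0 = \wt P_0^\perp \Phi'(a) \wt P_0$.

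For the three summands of \eqref{eqn:modf} other than the first, each is supported off the $\statesp_0 \times \statesp_0$ corner, so $\diag$ kills it while $\diag^\perp$ fixes it, and $R_\tau$ therefore acts as multiplication by $\tau^{1/2}$. Using the off-diagonal identification, the two terms with prefactor $\tau^{-1/2}$ contribute exactly $\wt P_0 \Phi'(a) \wt P_0^\perp + \wt P_0^\perp \Phi'(a) \wt P_0$, the two off-diagonal blocks of $\Phi'(a)$, whereas the last term contributes $\tau^{1/2} \wt P_0^\perp \Phi'(a) \wt P_0^\perp$.

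It remains to treat the corner summand $C := \wt P_0 (\tau^{-1} \diag + \tau^{-1/2} \diag^\perp)(\Phi'(a)) \wt P_0$. Using the corner-support property to drop the outer projections from the $\diag$ part, one has $C = \tau^{-1} \diag(\Phi'(a)) + \tau^{-1/2} \wt P_0 \diag^\perp(\Phi'(a)) \wt P_0$. Applying $\diag$ and invoking idempotency together with $\diag(\wt P_0 \diag^\perp(\Phi'(a)) \wt P_0) = (\diag \comp \diag^\perp)(\Phi'(a)) = 0$ gives $\diag(C) = \tau^{-1} \diag(\Phi'(a))$, so that $R_\tau(C) = \diag(\Phi'(a)) + \wt P_0 \diag^\perp(\Phi'(a)) \wt P_0 = \wt P_0 \Phi'(a) \wt P_0$, the corner block of $\Phi'(a)$. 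Summing the corner, off-diagonal and far-corner contributions then collapses to $\Phi'(a) + (\tau^{1/2} - 1) \wt P_0^\perp \Phi'(a) \wt P_0^\perp$, as claimed. I expect the only real difficulty to be the careful bookkeeping of the idempotency relations and the corner-support property of $\diag$; once these are marshalled, the powers of $\tau$ cancel on all blocks except the $\statesp_0^\perp \times \statesp_0^\perp$ one, where the surviving factor $\tau^{1/2}$ produces the stated correction.
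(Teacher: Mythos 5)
Your proposal is correct and follows essentially the same route as the paper: both arguments rest on the corner-support identities $\diag(T) = \wt{P}_0\diag(T)\wt{P}_0 = \diag(\wt{P}_0 T \wt{P}_0)$ (the paper's \eqref{eqn:diagker}, derived from $d(\id_\statesp) = P_0$ and the bimodule property), the idempotency relation $\diag \comp \diag^\perp = 0$, and the observation that $\Phi(a)$ and $\Phi'(a)$ agree off the diagonal. The only difference is organisational --- you apply $\tau\diag + \tau^{1/2}\diag^\perp$ to each of the four summands of \eqref{eqn:modf} separately, whereas the paper first computes $\diag\bigl(\nmodf{\Phi}{\tau}(a)\bigr) = \tau^{-1}\diag\bigl(\Phi'(a)\bigr)$ globally and then expands --- and the two computations cancel in the same way.
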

\begin{proof}
Note first that, as $d_0( \id_{\statesp_0} ) = \id_{\statesp_0}$, it
follows that $d( \id_\statesp ) = P_0$ and so, by the bimodule
property for a conditional expectation,
\[
d( P_0 X ) = d( X ) = d( X P_0 ) %
\qquad \text{for all } X \in \bop{\statesp}{}.
\]
Hence, using the bimodule property again,
\begin{align}
\wt{P}_0 \diag( T ) & = \diag( \wt{P}_0 T ) = \diag( T ) = %
\diag( T \wt{P}_0 ) = \diag( T ) \wt{P}_0 \nonumber\\[1ex]
\text{and} \quad \wt{P}_0^\perp \diag( T ) & = %
\diag( \wt{P}_0^\perp T ) = 0 = %
\diag( T \wt{P}_0^\perp ) = \diag( T ) \wt{P}_0^\perp %
\qquad \text{for all } T \in \bop{\ini \otimes \statesp}{}.
\label{eqn:diagker}
\end{align}
Consequently,
\begin{align*}
( \tau \diag + \tau^{1 / 2} \diag^\perp )%
\bigl( \nmodf{\Phi}{\tau}( a ) \bigr) & = %
\diag\bigl( \Phi'( a ) \bigr) + %
\tau^{1 / 2} \nmodf{\Phi}{\tau}( a ) - %
\tau^{-1 / 2} \diag\bigl( \Phi'( a ) \bigr) \\[1ex]
& = ( 1 - \tau^{-1 / 2} ) \diag\bigl( \Phi'( a ) \bigr) + %
\wt{P}_0 ( \tau^{-1 / 2} \diag + \diag^{\perp} )%
\bigl( \Phi'( a ) \bigr) \wt{P}_0 \\
 & \quad + \wt{P}_0 \Phi( a ) \wt{P}_0^\perp + %
\wt{P}_0^\perp \Phi( a ) \wt{P}_0 + %
\tau^{1 / 2} \wt{P}_0^\perp \Phi'( a ) \wt{P}_0^\perp \\[1ex]
& = ( 1 - \tau^{-1 / 2} ) \diag\bigl( \Phi'( a ) \bigr) + %
( \tau^{-1 / 2} - 1 ) \wt{P}_0 \diag\bigl( \Phi'( a ) \bigr) %
\wt{P}_0 \\
 & \quad + \wt{P}_0 \Phi'( a ) \wt{P}_0 + %
\wt{P}_0 \Phi( a ) \wt{P}_0^\perp + %
\wt{P}_0^\perp \Phi( a ) \wt{P}_0 + %
\tau^{1 / 2} \wt{P}_0^\perp \Phi'( a ) \wt{P}_0^\perp \\[1ex]
& = \Phi'( a ) + ( \tau^{1 / 2} - 1 ) %
\wt{P}_0^\perp \Phi'( a ) \wt{P}_0^\perp.
\qedhere
\end{align*}
\end{proof}

The following theorem gives a convergence result for quantum random
walks with particles in the arbitrary normal state $\state$. Recall
that $\Delta$ denotes the orthogonal projection from
$\ini \otimes \mmul$ onto $\ini \otimes \mul$.

\begin{theorem}\label{thm:main}
Let $\tau_n > 0$ and
$\Phi_n$,~$\Psi \in %
\statesp\bopp\bigl( \opsp; \opsp \matten \bop{\statesp}{} \bigr)$
be such that
\[
\tau_n \to 0 \qquad \text{and} \qquad %
\nmodf{\Phi_n}{\tau_n} \matten \id_{\bop{\statesp}{}} \to %
\Psi \matten \id_{\bop{\statesp}{}} \text{ strongly} %
\qquad \text{as } n \to \infty.
\]
Define
$\psi \in %
\mmul\bopp\bigl( \opsp; \opsp \matten \bop{\mmul}{} \bigr)$ by setting
\begin{align}\label{eqn:psidef}
\psi( a ) & := %
\Delta^\perp ( \ppi \comp \Psi )( a ) \Delta^\perp + %
\Delta^\perp ( \ppi \comp \diag^\perp \comp \Psi )( a ) \Delta %
\nonumber \\
& \qquad + %
\Delta ( \ppi \comp \diag^\perp \comp \Psi )( a ) \Delta^\perp + %
\Delta \ppi ( \wt{P}_0^\perp \Psi( a ) \wt{P}_0^\perp ) \Delta %
\qquad \text{for all } a \in \opsp,
\end{align}
and note that $\psi$ is completely bounded if $\Psi$ is. Then
$J^{\ppi \comp \Phi, \tau} \to j^\psi$; furthermore,
\[
\text{if} \quad \| \nmodf{\Phi_n}{\tau_n} - \Psi \|_\statesp \to 0 %
\quad \text{then} \quad J^{\ppi \comp \Phi, \tau} \ttommb j^\psi
\]
and, when $\Phi_n$ and $\Psi$ are completely bounded,
\[
\text{if} \quad \| \nmodf{\Phi_n}{\tau_n} - \Psi \|_\cb \to 0 %
\quad \text{then} \quad J^{\ppi \comp \Phi, \tau} \ttocb j^\psi.
\]
\end{theorem}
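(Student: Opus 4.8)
The plan is to obtain the result from the vacuum-state convergence theorem, Theorem~\ref{thm:qrw}, applied to the embedded walk $J^{\ppi\comp\Phi,\tau}$. Its generator $\ppi\comp\Phi$ maps $\opsp$ into $\opsp\matten\bop{\mmul}{}$, and the distinguished vector is now the cyclic vector $\vac$ of the concrete GNS representation. By that theorem it suffices to prove that the vacuum modification \eqref{eqn:modif} of $\ppi\comp\Phi_n$ converges to $\psi$: strongly for the first conclusion, and in $\|\cdot\|_\mmulb$ (respectively $\|\cdot\|_\cb$) for the other two.

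First I would record the facts that drive the computation. Since $\pi$ is unital, $(\ppi\comp\Phi)(a)-a\otimes\id_\mmul=\ppi(\Phi'(a))$; and $\Delta^\perp=E_\vac E^\vac$ with $E^\vac\ppi(T)E_\vac=\sstate(T)$. Because $\vac=U_\statesp(\State^{1/2})$ lies in $\statesp_0\otimes\conj{\statesp_0}$ (Proposition~\ref{prp:concrete}), the projections obey $\ppi(\wt{P}_0^\perp)\le\Delta$ and $\Delta^\perp\le\ppi(\wt{P}_0)$. Finally, $\sstate\comp\diag=\sstate$ gives $\sstate\comp\diag^\perp=0$, while $\sstate(\wt{P}_0^\perp T)=0$ by \eqref{eqn:sstateker}. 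Together these say that the vacuum--vacuum corner of $\ppi(T)$ reproduces $\sstate(T)$ and annihilates $\diag^\perp$, and that $\ppi(\wt{P}_0^\perp T\wt{P}_0^\perp)$ lies entirely within the range of $\Delta$.

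The heart of the argument is a single identity. For $N\in\opsp\matten\bop{\statesp}{}$ set
\[
\Xi_\tau(N):=(\tau^{-1/2}\Delta^\perp+\Delta)\,\ppi\bigl(\tau\diag(N)+\tau^{1/2}\diag^\perp(N)-(\tau^{1/2}-1)\wt{P}_0^\perp N\wt{P}_0^\perp\bigr)\,(\tau^{-1/2}\Delta^\perp+\Delta).
\]
Since $\wt{P}_0^\perp M\wt{P}_0^\perp=\wt{P}_0^\perp\Phi'(a)\wt{P}_0^\perp$ for $M=\nmodf{\Phi}{\tau}(a)$ by \eqref{eqn:modf}, Lemma~\ref{lem:cruc} shows that the bracketed expression equals $\Phi'(a)$ when $N=M$, so $\mmodf{\ppi\comp\Phi}{\tau}=\Xi_\tau\comp\nmodf{\Phi}{\tau}$. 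Expanding $\Xi_\tau$ into the four $\Delta/\Delta^\perp$ corners, the a priori singular contributions are tamed by the facts above: the $\tau^{1/2}\diag^\perp(N)$ summand has vacuum--vacuum corner $\sstate(\diag^\perp N)=0$, so the $\tau^{-1}$ scaling yields no blow-up, and $\ppi(\wt{P}_0^\perp N\wt{P}_0^\perp)$ meets only the unscaled $\mul$--$\mul$ corner. Collecting terms gives $\Xi_\tau=\Xi_0+(\Xi_\tau-\Xi_0)$ with
\[
\Xi_0(N):=\Delta^\perp\ppi(N)\Delta^\perp+\Delta^\perp\ppi(\diag^\perp N)\Delta+\Delta\ppi(\diag^\perp N)\Delta^\perp+\Delta\ppi(\wt{P}_0^\perp N\wt{P}_0^\perp)\Delta,
\]
every term of $\Xi_\tau-\Xi_0$ carrying a factor $\tau^{1/2}$ or $\tau$, so that $\|\Xi_\tau-\Xi_0\|_\cb=O(\tau^{1/2})$. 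Comparison with \eqref{eqn:psidef} shows $\Xi_0\comp\Psi=\psi$, and as $\Xi_0$ is built from the $*$-homomorphism $\ppi$, the conditional expectation $\diag$ and compressions by the projections $\Delta$, $\Delta^\perp$, $\wt{P}_0^\perp$, it is completely bounded; this also gives the stated complete boundedness of $\psi$.

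It then remains to write
\[
\mmodf{\ppi\comp\Phi_n}{\tau_n}-\psi=(\Xi_{\tau_n}-\Xi_0)\comp\nmodf{\Phi_n}{\tau_n}+\Xi_0\comp(\nmodf{\Phi_n}{\tau_n}-\Psi).
\]
The first summand tends to $0$ because $\|\Xi_{\tau_n}-\Xi_0\|_\cb=O(\tau_n^{1/2})$ and $\nmodf{\Phi_n}{\tau_n}$ is bounded, while the second inherits the hypothesised convergence of $\nmodf{\Phi_n}{\tau_n}$ through the completely bounded map $\Xi_0$; hence $\mmodf{\ppi\comp\Phi_n}{\tau_n}\to\psi$ in each of the three modes and Theorem~\ref{thm:qrw} supplies the three conclusions. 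The step I expect to be the main obstacle is the passage to liftings, since Theorem~\ref{thm:qrw} needs convergence after $\matten\id_{\bop{\mmul}{}}$ whereas the hypothesis provides it only after $\matten\id_{\bop{\statesp}{}}$. Functoriality of liftings turns the last display into the corresponding identity for the $\mmul$-liftings; and since $\mmul=\statesp\otimes\conj{\statesp_0}$, the $\mmul$-lift of $\nmodf{\Phi_n}{\tau_n}$ is the $\conj{\statesp_0}$-lift of its $\statesp$-lift, so everything reduces to upgrading the $\statesp$-lifted convergence to the $\mmul$-lifted one. This is precisely where the dichotomy of the Remark following Theorem~\ref{thm:qrw} is used: for $\dim\statesp=\infty$ strong convergence of the $\statesp$-lift is convergence in $\|\cdot\|_\cb$, which controls every lifting, whereas for $\dim\statesp<\infty$ all the relevant lifting norms are equivalent to the underlying norm.
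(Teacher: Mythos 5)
Your argument is correct and follows essentially the same route as the paper: both reduce to Theorem~\ref{thm:qrw} by combining Lemma~\ref{lem:cruc} with the vanishing of $\sstate \comp \diag^\perp$ and of $E^\vac \ppi( \wt{P}_0^\perp )$ to identify $\mmodf{\ppi \comp \Phi_n}{\tau_n} - \psi$ with $\Xi_0 \comp ( \nmodf{\Phi_n}{\tau_n} - \Psi )$ plus an $O( \tau_n^{1 / 2} )$ remainder. Your packaging of the corner-by-corner computations as the single operator identity $\mmodf{\ppi \comp \Phi}{\tau} = \Xi_\tau \comp \nmodf{\Phi}{\tau}$, and your explicit handling of the upgrade from $\statesp$-lifted to $\mmul$-lifted convergence, are tidier presentations of steps the paper carries out (or leaves implicit) rather than a different method.
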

\begin{proof}
Let $a \in \opsp$ and, for brevity, let $\tau = \tau_n$ and
$\Phi = \Phi_n$. Note first that
\[
E^\vac \mmodf{\ppi \comp \Phi}{\tau}( a ) E_\vac = %
\tau^{-1} E^\vac \ppi\bigl( \Phi'( a ) \bigr) E_\vac = %
\tau^{-1} \sstate\bigl( \Phi'( a ) \bigr),
\]
by \eqref{eqn:modif} and \eqref{eqn:homstate}, whereas
\[
E^\vac \ppi\bigl( \nmodf{\Phi}{\tau}( a ) \bigr) E_\vac = %
\sstate\bigl( \nmodf{\Phi}{\tau}( a ) \bigr) = %
\tau^{-1} \sstate\bigl( \Phi'( a ) \bigr),
\]
with the second equality a consequence of \eqref{eqn:sstateker} and
the fact that $\diag$ preserves $\sstate$. Hence
\[
E^\vac \mmodf{\ppi \comp \Phi}{\tau}( a ) E_\vac = %
E^\vac \ppi\bigl( \nmodf{\Phi}{\tau}( a ) \bigr) E_\vac.
\]
Next, let $X \in \ker \state$ and use \eqref{eqn:modif} and
\eqref{eqn:homstate} again to see that
\[
E^\vac \mmodf{\ppi \comp \Phi}{\tau}( a ) E_{[ X ]} = \tau^{-1 / 2} %
\sstate\bigl( \Phi( a ) ( \id_\ini \otimes X ) \bigr) = %
\sstate\bigl( \tau^{-1 / 2} \Phi'( a ) ( \id_\ini \otimes X ) \bigr),
\]
where the second equality holds because
$\sstate( a \otimes X ) = \state( X ) a = 0$. As $\diag$
preserves $\sstate$, so
\[
\sstate\bigl( \wt{P}_0^\perp \Phi'( a ) \wt{P}_0^\perp %
( \id_\ini \otimes X ) \bigr) = %
( \sstate \comp \diag )\bigl( \wt{P}_0^\perp \Phi'( a ) %
\wt{P}_0^\perp ( \id_\ini \otimes X ) \bigr) = 0,
\]
by \eqref{eqn:diagker}, and then Lemma~\ref{lem:cruc} gives that
\begin{align*}
E^\vac \mmodf{\ppi \comp \Phi}{\tau}( a ) E_{[ X ]} & = %
\sstate\bigl( ( \tau^{1 / 2} \diag + \diag^\perp ) \bigl( %
\nmodf{\Phi}{\tau}( a ) \bigr) ( \id_\ini \otimes X ) \bigr) \\[1ex]
& = E^\vac \ppi\Bigl( ( \tau^{1 / 2} \diag + \diag^\perp ) %
\bigl( \nmodf{\Phi}{\tau}( a ) \bigr) \Bigr) E_{[ X ]};
\end{align*}
similar working produces the identity
\[
E^{[ X ]} \mmodf{\ppi \comp \Phi}{\tau}( a ) E_\vac = %
E^{[ X ]} \ppi\Bigl( ( \tau^{1 / 2} \diag + \diag^\perp ) %
\bigl( \nmodf{\Phi}{\tau}( a ) \bigr) \Bigr) E_\vac.
\]
Finally, if $X$,~$Y \in \ker \state$ then \eqref{eqn:modif} and
Lemma~\ref{lem:cruc} give that
\[
E^{[ X ]} \mmodf{\ppi \comp \Phi}{\tau}( a ) E_{[ Y ]} = %
E^{[ X ]} \ppi\bigl( \wt{P}_0^\perp \Phi'( a ) \wt{P}_0^\perp + %
\tau^{1/ 2} R_1( a, \tau ) \bigr) E_{[ Y ]},
\]
where
\[
R_1( a, \tau ) := %
( \tau^{1/ 2 } \diag + \diag^\perp )%
\bigl( \nmodf{\Phi}{\tau}( a ) \bigr) - %
\wt{P}_0^\perp \Phi'( a ) \wt{P}_0^\perp.
\]
Hence
\begin{align*}
( \mmodf{\ppi \comp \Phi}{\tau} - \psi )( a ) & = %
\Delta^\perp \ppi\bigl( ( \nmodf{\Phi}{\tau} - \Psi )( a ) \bigr) %
\Delta^\perp + \Delta^\perp ( \ppi \comp \diag^\perp )%
\bigl( ( \nmodf{\Phi}{\tau} - \Psi )( a ) \bigr) \Delta \\
& \quad + \Delta ( \ppi \comp \diag^\perp )\bigl( %
( \nmodf{\Phi}{\tau} - \Psi )( a ) \bigr) \Delta^\perp \\
 & \quad + \Delta %
\ppi( \wt{P}_0^\perp ( \nmodf{\Phi}{\tau} - \Psi )( a ) %
\wt{P}_0^\perp ) \Delta + \tau^{1/2} R_2( a, \tau ),
\end{align*}
where
\[
R_2( a, \tau ) := \Delta^\perp \bigl( \ppi \comp \diag \comp %
\nmodf{\Phi}{\tau} \bigr)( a ) \Delta + %
\Delta^\perp \bigl( %
\ppi \comp \diag \comp \nmodf{\Phi}{\tau} \bigr)( a ) \Delta + %
\Delta \ppi\bigl( R_1( a , \tau ) \bigr) \Delta.
\]
The result now follows from Theorem~\ref{thm:qrw}.
\end{proof}

\begin{remark}
Theorem~\ref{thm:main} is an extension of previous results. If
$\state$ is faithful or a vector state then it reduces to
\cite[Theorem~3]{Blt10b} or \cite[Theorem~7.6]{Blt10a}, respectively;
the former theorem has \cite[Theorem~7]{AtJ07a} as a special case,
whereas the latter is a generalisation of Attal and Pautrat's
convergence theorem \cite[Theorem~13]{AtP06}.
\end{remark}

\begin{proposition}\label{prp:indnoise}
Let $X$, $Y \in \ker \state$. If $\psi( a )$ is given by
\eqref{eqn:psidef} then
\begin{align*}
E^\vac \psi( a ) E_\vac & = %
E^\vac \ppi\bigl( \Psi( a ) \bigr) E_\vac = %
\sstate\bigl( \Psi( a ) \bigr), \\[1ex]
E^\vac \psi( a ) E_{[ Y ]} & = %
E^\vac \ppi\bigl( \Psi( a ) \bigr) E_{[ d^\perp( Y ) ]}, \\[1ex]
E^{[ X ]} \psi( a ) E_\vac & = %
E^{[ d^\perp( X ) ]} \ppi\bigl( \Psi( a ) \bigr) E_\vac \\[1ex]
\text{and} \quad %
E^{[ X ]} \psi( a ) E_{[ Y ]} & = %
E^{[ P_0^\perp X ]} \ppi\bigl( \Psi( a ) \bigr) E_{[ P_0^\perp Y ]}.
\end{align*}
Thus if $N := \dim \statesp < \infty$ then there can be no more than
\begin{equation}\label{num:indnoise}
2 ( N k - l ) + ( N - k )^2 k^2
\end{equation}
independent noises in the quantum stochastic differential equation
\eqref{eqn:qsde} satisfied by the limit cocycle $j^\psi$, where
\[
k := \dim \statesp_0 \in \{ 1, \ldots, N \} \qquad \text{and} \qquad %
l := \rank d_0 \in \{ 1, \ldots, k^2 \}.
\]
\end{proposition}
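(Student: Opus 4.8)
The plan is to prove the four displayed operator identities first and then read off the noise count from them by a dimension count. The structural observation driving everything is that, since $[Z] \in \mul$ precisely when $Z \in \ker\state$ and $\Delta$ projects $\ini \otimes \mmul$ onto $\ini \otimes \mul$, the vectors $E_\vac$ and $E_{[Y]}$ (with $Y \in \ker\state$) are eigenvectors for $\Delta$ and $\Delta^\perp$: one has $\Delta^\perp E_\vac = E_\vac$, $\Delta E_\vac = 0$, $\Delta E_{[Y]} = E_{[Y]}$, $\Delta^\perp E_{[Y]} = 0$, together with the adjoint relations. Substituting these into \eqref{eqn:psidef} leaves, in each of the four cases, exactly one of the four summands: the $\Delta^\perp(\ppi\comp\Psi)\Delta^\perp$ term for $E^\vac\psi(a)E_\vac$, one of the two middle terms for each mixed expression, and the final term for $E^{[X]}\psi(a)E_{[Y]}$.

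Next I would convert each surviving expression into slice-map form via \eqref{eqn:homstate}. The first identity is immediate, since $\vac = [\id_\statesp]$ gives $E^\vac\ppi(\Psi(a))E_\vac = \sstate(\Psi(a))$. For the two mixed identities the essential step is to move the conditional expectation off the operator and onto the vector label: writing $E_{[Y]} = \ppi(\id_\ini \otimes Y)E_\vac$ and using that $\ppi$ is a homomorphism turns $E^\vac(\ppi\comp\diag^\perp\comp\Psi)(a)E_{[Y]}$ into $\sstate\bigl(\diag^\perp(\Psi(a))(\id_\ini \otimes Y)\bigr)$, and then the self-adjointness relation $\sstate(\diag(A)B) = \sstate(A\diag(B))$ transfers $d^\perp$ onto $Y$ to give $\sstate\bigl(\Psi(a)(\id_\ini\otimes d^\perp(Y))\bigr) = E^\vac\ppi(\Psi(a))E_{[d^\perp(Y)]}$. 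The third identity is the adjoint of this computation. The fourth is purely algebraic: $(\id_\ini\otimes X)^*\wt{P}_0^\perp = (\id_\ini\otimes P_0^\perp X)^*$ and $\wt{P}_0^\perp(\id_\ini\otimes Y) = \id_\ini\otimes P_0^\perp Y$, so \eqref{eqn:homstate} delivers $E^{[P_0^\perp X]}\ppi(\Psi(a))E_{[P_0^\perp Y]}$ at once. Throughout one notes that $d^\perp(Y)$ and $P_0^\perp X$ lie in $\ker\state$ (from $\state\comp d = \state$ and \eqref{eqn:stateker}), so the right-hand labels genuinely lie in $\mul$.

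For the count, relative to $\mmul = \C\vac \oplus \mul$ the matrix of $\psi$ carries its annihilation entries in the row $[Y]\mapsto E^\vac\psi(a)E_{[Y]}$, its creation entries in the column $[X]\mapsto E^{[X]}\psi(a)E_\vac$, and its gauge entries in the block $([X],[Y])\mapsto E^{[X]}\psi(a)E_{[Y]}$. The identities show that the first two factor through $Y\mapsto[d^\perp(Y)]$ and the third through $X\mapsto[P_0^\perp X]$, so the numbers of independent creation, annihilation and gauge integrators are bounded by $\dim\lin\{[d^\perp(Y)]:Y\in\ker\state\}$, the same quantity again, and $\bigl(\dim\lin\{[P_0^\perp X]:X\in\ker\state\}\bigr)^2$ respectively. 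Since $U_\statesp$ is an isometric isomorphism and $\State^{1/2} = \State^{1/2}P_0$ restricts to an injective operator on the finite-dimensional $\statesp_0$, I would evaluate these dimensions by writing $d^\perp(Y)\State^{1/2}$ and $P_0^\perp X\State^{1/2}$ as two-by-two blocks over $\statesp_0\oplus\statesp_0^\perp$. For the former only the $\statesp_0$-column survives, with entries $d_0^\perp(Y_0^0)(F_0^*\State^{1/2}F_0)$ and $Y_0^\times(F_0^*\State^{1/2}F_0)$; right multiplication by the invertible compression is a bijection, so these span spaces of dimensions $\rank d_0^\perp = k^2 - l$ and $\dim\bop{\statesp_0}{\statesp_0^\perp} = (N-k)k$, occupying distinct rows, hence $Nk-l$ in total. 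For the latter only the single entry $X_0^\times(F_0^*\State^{1/2}F_0)$ survives, of dimension $(N-k)k$. Summing $2(Nk-l)$ and $\bigl((N-k)k\bigr)^2$ yields \eqref{num:indnoise}.

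The only delicate point is the transfer relation $\sstate(\diag(A)B)=\sstate(A\diag(B))$ used for the two mixed identities; everything else is bookkeeping. I would record it inline, arguing from idempotency and the bimodule property together with $\sstate\comp\diag=\sstate$, via $\sstate(\diag(A)B) = \sstate(\diag(\diag(A)B)) = \sstate(\diag(A)\diag(B)) = \sstate(\diag(A\diag(B))) = \sstate(A\diag(B))$, and using that $\diag$ commutes with the adjoint when it is applied to $(\id_\ini\otimes X)^*$ in the creation case.
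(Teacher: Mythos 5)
Your proposal is correct and follows essentially the same route as the paper: the four identities via \eqref{eqn:homstate} and the transfer relation $\sstate(\diag(A)B)=\sstate(A\diag(B))$ coming from $\sstate\comp\diag=\sstate$ and the bimodule property, then the dimension count through the block decomposition of $d^\perp(X)\State^{1/2}$ and $P_0^\perp X\State^{1/2}$ using the invertibility of $\State_0^{1/2}$. The only cosmetic difference is that you obtain $k^2-l$ directly from the idempotency of $d_0^\perp$, where the paper identifies $\im d_0^\perp$ as the orthogonal complement of $\im d_0$ for the $\state_0$-inner product; both yield the same count.
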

\begin{proof}
If $Y \in \ker \state$ then \eqref{eqn:homstate} implies that
\[
E^\vac \psi( a ) E_{[ Y ]} = %
\sstate\bigl( ( \diag^\perp \comp \Psi )( a ) %
( \id_\ini \otimes Y ) \bigr) = %
\sstate\bigl( \Psi( a ) ( \id_\ini \otimes Y ) \bigr) - %
\sstate\bigl( ( \diag \comp \Psi )( a ) ( \id_\ini \otimes Y ) \bigr)
\]
However, as $\diag$ preserves $\sstate$, it follows from the bimodule
property that
\[
\sstate\bigl( ( \diag \comp \Psi )( a ) %
( \id_\ini \otimes Y ) \bigr) = %
( \sstate \comp \diag )\bigl( ( \diag \comp \Psi )( a ) %
( \id_\ini \otimes Y ) \bigr) = %
( \sstate \comp \diag )\bigl( \Psi( a ) %
\diag( \id_\ini \otimes Y ) \bigr)
\]
and therefore
\[
E^\vac \psi( a ) E_{[ Y ]} = %
\sstate\bigl( \Psi( a ) \diag^\perp( I_\ini \otimes Y ) \bigr) = %
E^\vac \ppi\bigl( \Psi( a ) \bigr) E_{[ d^\perp( Y ) ]},
\]
as required. The other identities are may be established similarly.

Henceforth, suppose that $\statesp$ is finite dimensional. From the
previous working, there can be no more than $2 n_1 + n_2^2$
independent noises in the quantum stochastic differential
equation~\eqref{eqn:qsde}, where
\[
n_1 := \dim \{ [ d^\perp( X ) ] : X \in \ker \state \} %
\qquad \text{and} \qquad %
n_2 := \dim \{ [ P_0^\perp X ] : X \in \ker \state \}.
\]
To find $n_2$, note that
\[
[ P_0^\perp X ] = %
( P_0^\perp \otimes \id_{\conj{\statesp_0}} ) \pi( X ) \vac = %
U_\statesp( P_0^\perp X \State^{1 / 2} ) %
\qquad \text{for all } X \in \bop{\statesp}{},
\]
so that, in particular, $[ P_0^\perp ] = 0$; as $\vac$ is a cyclic
vector for the representation $\pi$, it follows that
\[
n_2 = %
\rank( P_0^\perp \otimes \id_{\conj{\statesp_0}} ) = %
\dim( \statesp_0^\perp \otimes \conj{\statesp_0} ) = ( N - k ) k.
\]
For $n_1$, note first that $d^\perp( \id_\statesp ) = P_0^\perp$ and
$[ P_0^\perp ] = 0$, hence
\[
n_1 = \dim \{ [ d^\perp( X ) ] : X \in \bop{\statesp}{} \} = %
\dim \{ d^\perp( X ) \State^{1 / 2} : X \in \bop{\statesp}{} \}.
\]
Writing
$X = %
\left[\begin{smallmatrix}
 X_0^0 & X_\times^0 \\[0.5ex]
 X_0^\times & X_\times^\times
\end{smallmatrix}\right]$
and
$\State = %
\left[\begin{smallmatrix}
 \State_0^{\vphantom{1 / 2}} \ & 0 \vphantom{X_0^0} \\[1ex]
 0 \ & 0 \vphantom{X_0^0}
\end{smallmatrix}\right]$,
it follows that
\[
d^\perp( X ) \State^{1 / 2} = 
\left[\begin{smallmatrix}
 d_0^\perp( X_0^0 ) & X_\times^0 \\[0.5ex]
 X_0^\times & X_\times^\times
\end{smallmatrix}\right] \, 
\left[\begin{smallmatrix}
 \State_0^{1 / 2} & 0 \vphantom{X_0^0} \\[1ex]
 0 & 0 \vphantom{X_0^0}
\end{smallmatrix}\right] = %
\left[\begin{smallmatrix}
 d_0^\perp( X_0^0 ) \State_0^{1 / 2} & 0 \\[0.5ex]
 X_0^\times \State_0^{1 / 2} & 0
\end{smallmatrix}\right].
\]
As $\state_0$ is faithful, the operator $\State_0^{1 / 2}$ is
invertible, therefore
\[
\dim \{ X_0^\times \State_0^{1 / 2} : %
X_0^\times \in \bop{\statesp_0}{\statesp_0^\perp} \} = %
\dim \bop{\statesp_0}{\statesp_0^\perp} = k ( N - k ).
\]
Similarly,
\[
\{ d_0^\perp( Z ) \State_0^{1 / 2} : Z \in \bop{\statesp_0}{} \} %
\cong \{ d_0^\perp( Z ) : Z \in \bop{\statesp_0}{} \} = %
\{ d_0( Z ) : Z \in \bop{\statesp_0}{} \}^\perp,
\]
where the orthogonal complement is taken with respect to the inner
product
\[
\langle Z, W \rangle := \state_0( Z^* W ) %
\qquad \text{for all } Z, W \in \bop{\statesp_0}{};
\]
the last equality holds because the bimodule property and the fact
that $d_0$ preserves $\state_0$ imply that $d_0$ is a self-adjoint
linear idempotent, \ie an orthogonal projection, on this inner-product
space. As
\[
\dim \{ d_0( Z ) : Z \in \bop{\statesp_0}{} \}^\perp = %
\dim \bop{\statesp_0}{} - \rank d_0 = k^2 - l,
\]
the result now follows.
\end{proof}

\begin{remark}
Suppose $N := \dim \statesp < \infty$ and let $k := \dim \statesp_0$
and $l := \rank d_0$, as in Proposition~\ref{prp:indnoise}. Since
$\mmul = \statesp \otimes \conj{\statesp_0}$, in principle
$\dim \mul = N^2 k^2 - 1$ quantum noises can appear in the quantum
stochastic differential equation \eqref{eqn:qsde}.

If $\state$ is a vector state then $k = 1$ and $l = 1$, so
\eqref{num:indnoise} equals
\[
2 ( N - 1 ) + ( N - 1 )^2 = N^2 - 1,
\]
as expected. At the other extreme, if $\state$ is a faithful state
then \eqref{num:indnoise} equals $2 ( N^2 - l )$.

In general,
\begin{align*}
N^2 k^2 - 1 - \big( 2 ( N k - l ) + ( N - k )^2 k^2 \bigr) & = %
2 N k^3 - k^4 - 2 N k + 2 l - 1 \\[1ex]
 & = ( k^2 - 1 ) \bigl( ( 2 N - k ) k - 1 \bigr) + 2 l - 2
\end{align*}
and this equals zero if and only if $k = 1$. Hence the thermalisation
phenomenon, the loss of noises in the quantum stochastic differential
equation which governs the limit cocycle, occurs exactly when $\state$
is not a vector state.
\end{remark}

\section{Applications}\label{sec:applications}

\begin{notation}
Let $\vna \subseteq \bop{\ini}{}$ be a von~Neumann algebra; recall
that $\vna \matten \bop{\hilb}{} = \vna \uwkten \bop{\hilb}{}$ for any
Hilbert space $\hilb$, and
$\Phi \matten \id_{\bop{\hilb}{}} = \Phi \uwkten \id_{\bop{\hilb}{}}$
for any ultraweakly continuous, $\hilb$-bounded map $\Phi$.

Let $\state$ be a normal state on $\bop{\statesp}{}$, with density
matrix $\State$, and let $\statesp_0 := \overline{\im \State^{1 / 2}}$
as in Section~\ref{sec:concrete}. Suppose $d_0$ is a conditional
expectation on $\bop{\statesp_0}{}$ which preserves the faithful state
$\state_0$ defined in Lemma~\ref{lem:faithfulstate}; let
$\diag_0 := \id_{\bop{\ini}{}} \uwkten d_0$ and
$\sstate_0 := \id_{\bop{\ini}{}} \uwkten \state_0$.
\end{notation}

\subsection{Hudson--Parthasarathy evolutions}

The following theorem is a generalisation of both
\cite[Remark~7]{Blt10b} and the well-known Hudson--Parthasarathy
conditions for processes to be isometric, co-isometry or unitary.

\begin{theorem}\label{thm:hp}
Let $F \in \vna \uwkten \bop{\statesp}{}$ and define
\[
\Psi : \vna \to \vna \uwkten \bop{\statesp}{}; \ %
a \mapsto ( a \otimes \id_\statesp ) F.
\]
If $\psi : \vna \to \vna \uwkten \bop{\mmul}{}$ is given
by~\eqref{eqn:psidef} then $\psi( a ) = ( a \otimes \id_\mmul ) G$ for
all~$a \in \vna$, where
\begin{equation}\label{eqn:Gdef}
G := \Delta^\perp \ppi( F ) \Delta^\perp + %
\Delta^\perp ( \ppi \comp \diag^\perp )( F ) \Delta + %
\Delta ( \ppi \comp \diag^\perp )( F ) \Delta^\perp + %
\Delta \ppi( \wt{P}_0^\perp F \wt{P}_0^\perp ) \Delta \in %
\vna \uwkten \bop{\mmul}{}.
\end{equation}
The cocycle $j^\psi$ is such that
$j^\psi_t( a ) = ( a \otimes \id_\mmul ) X_t$ for all $a \in \vna$ and
$t \in \R_+$, where the adapted $\ini$~process $X = ( X_t )_{t \ge 0}$
satisfies the right Hudson--Parthasarathy equation
\begin{equation}\label{eqn:rightqsde}
X_0 = \id_{\ini \otimes \fock}, \quad %
\rd X_t = \rd \Lambda_G( t ) \, X_t \qquad \text{for all } t \in \R_+.
\end{equation}
The process $X$ is composed of isometric, co-isometric or unitary
operators if and only if
\begin{equation}\label{eqn:fmat}
F = \begin{bmatrix}
 -\I ( H_d + H_o ) - \hlf K & -D^* V \\[1ex]
 D & V - I_{\ini \otimes \statesp_0^\perp} \end{bmatrix},
\end{equation}
where
\begin{mylist}
\item[\tu{(i)}] $H_d$, $H_o \in \vna \uwkten \bop{\statesp_0}{}$ are
self adjoint, with $H_d = \diag_0( H_d )$ and
$H_o = \diag_0^\perp( H_o )$,
\item[\tu{(ii)}] $K \in \vna \uwkten \bop{\statesp_0}{}$ is self
adjoint, with $K = \diag_0( K )$ and
$\sstate_0( K ) = \sstate_0( H_o^2 + D^* D )$,
\item[\tu{(iii)}]
$D \in \vna \uwkten \bop{\statesp_0}{\statesp_0^\perp}$
\item[and \tu{(iv)}] $V \in \vna \uwkten \bop{\statesp_0^\perp}{}$ is
isometric, co-isometric or unitary, respectively.
\end{mylist}
\end{theorem}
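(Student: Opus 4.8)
The plan is to treat Theorem~\ref{thm:hp} in two halves. The first half---that $\psi(a)=(a\otimes\id_\mmul)G$ with $G$ as in~\eqref{eqn:Gdef}, and that $j^\psi$ factorises as $j^\psi_t(a)=(a\otimes\id_\mmul)X_t$ with $X$ solving the right Hudson--Parthasarathy equation~\eqref{eqn:rightqsde}---is essentially bookkeeping. I would substitute $\Psi(a)=(a\otimes\id_\statesp)F$ directly into the four terms of~\eqref{eqn:psidef}. Since $\ppi$, $\diag^\perp$, $\wt{P}_0^\perp(\cdot)\wt{P}_0^\perp$ and the projections $\Delta$, $\Delta^\perp$ all commute past the left ampliation $a\otimes(\cdot)$ (the homomorphism property of $\ppi$, the $\bop{\ini}{}$-bimodule structure of $\diag$, and the fact that $\Delta$, $\wt{P}_0$ act only on the second tensor leg), each term pulls out a factor $a\otimes\id$ on the left, yielding $\psi(a)=(a\otimes\id_\mmul)G$. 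The factorisation of the cocycle and the form~\eqref{eqn:rightqsde} then follow from the discussion of right-multiplication generators recalled in the Introduction, so this part can be dispatched quickly.

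\textbf{Reducing the isometry/unitarity conditions.}
The substance is the equivalence between $X$ being isometric (resp.\ co-isometric, unitary) and the block form~\eqref{eqn:fmat}. By the Hudson--Parthasarathy conditions stated in the Introduction, $X$ is isometric iff $G+G^*+G^*\Delta G=0$ and co-isometric iff $G+G^*+G\Delta G^*=0$, with $\Delta$ the projection onto $\ini\otimes\mul$. The strategy is to re-express these identities in terms of $F$ and then decompose everything along $\ini\otimes\statesp=(\ini\otimes\statesp_0)\oplus(\ini\otimes\statesp_0^\perp)$, writing $F$ as a $2\times2$ block matrix with unknown corners. I would first rewrite $G$ in terms of $F$ using~\eqref{eqn:Gdef}, keeping track of how $\ppi$ and $\diag^\perp$ act on each block: on the $\statesp_0\times\statesp_0$ corner the conditional-expectation splitting $\diag=\diag_0$, $\diag^\perp=\diag_0^\perp$ is what produces the decomposition of the top-left entry of~\eqref{eqn:fmat} into the $\diag_0$-diagonal part $-\I H_d-\hlf K$ and the off-diagonal part $-\I H_o$; on the other three corners $\ppi$ acts by ampliation and $\diag^\perp$ is the identity, matching the vector-state régime. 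The projection $\Delta$ onto $\ini\otimes\mul$ restricted through the concrete GNS picture $\mmul=\statesp\otimes\conj{\statesp_0}$ must be computed explicitly so that the products $G^*\Delta G$ and $G\Delta G^*$ can be evaluated block by block.

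\textbf{Matching blocks.}
With $G$ in block form I would expand $G+G^*+G^*\Delta G$ and separate it into its four matrix entries relative to the $\statesp_0\oplus\statesp_0^\perp$ grading. The bottom-right block should collapse, after cancellation, to the single condition $V^*V=\id$ (resp.\ $VV^*=\id$), giving~(iv); the off-diagonal blocks should force the coupling operator to be $\mp D^*V$ and $D$ as in~\eqref{eqn:fmat}, giving~(iii); and the top-left block, after taking the $\diag_0$-diagonal and off-diagonal parts separately, should yield that $H_d$, $H_o$ are self-adjoint with the stated support conditions~(i) and that $K$ is self-adjoint, $\diag_0$-diagonal, and satisfies the trace-type constraint $\sstate_0(K)=\sstate_0(H_o^2+D^*D)$ in~(ii). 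The self-adjointness of $H_d$ and $H_o$ comes from pairing the equation with its adjoint; the constraint on $K$ is the residue of the $\state_0$-slice of the top-left relation, where the faithfulness of $\state_0$ (Lemma~\ref{lem:faithfulstate}) and the fact that $\diag_0$ preserves $\state_0$ are exactly what is needed to extract a scalar-level identity rather than an operator identity. The unitary case is handled by imposing both the isometric and co-isometric relations simultaneously.

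\textbf{Expected obstacle.}
The main difficulty will be the top-left block, specifically deriving condition~(ii): the quadratic term $G^*\Delta G$ contributes a term that is \emph{not} itself $\diag_0$-diagonal, so when one projects onto the $\diag_0$-diagonal part the off-diagonal Hamiltonian $H_o$ and the coupling $D$ re-enter through $H_o^2+D^*D$, and it is only after applying the state slice $\sstate_0$ that the correct scalar constraint emerges. Getting the interaction between $\Delta$, the conditional expectation $\diag_0$, and the state $\state_0$ to produce precisely $\sstate_0(K)=\sstate_0(H_o^2+D^*D)$---rather than a stronger operator identity that would over-constrain $K$---is the delicate point, and is what distinguishes this result from the purely faithful case of \cite[Remark~7]{Blt10b}.
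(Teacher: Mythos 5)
Your plan coincides with the paper's proof: the first two claims are dispatched exactly as you describe (direct substitution into \eqref{eqn:psidef} plus the Lindsay--Wills result for right Hudson--Parthasarathy cocycles), and the characterisation of isometry/co-isometry is obtained, as you propose, by expanding $G + G^* + G^* \Delta G$ using $\sstate \comp \diag = \sstate$ and \eqref{eqn:homstate}, writing $F$ in blocks over $\statesp_0 \oplus \statesp_0^\perp$ with the top-left corner split by $\diag_0$ versus $\diag_0^\perp$, and matching conditions block by block. One correction of emphasis: the faithfulness of $\state_0$ is what upgrades the $\Delta^\perp ( \cdot ) \Delta$ conditions to the \emph{operator} identities $B + B^* = 0$ and $C + D^* V = 0$, whereas the $\Delta^\perp ( \cdot ) \Delta^\perp$ condition is automatically only the state-level constraint $\sstate_0( A + A^* ) = -\sstate_0( B^* B + D^* D )$ because $\Delta^\perp$ projects onto $\ini \otimes \C \vac$ and so only the $\sstate$-slice survives --- no faithfulness is needed there, and this is why $K$ is constrained only through $\sstate_0$.
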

\begin{proof}
The first claim is immediate, and the second follows from
\cite[Proof of Theorem~7.1]{LiW00}. For the final part, recall that
$\sstate \comp \diag = \sstate$, by Proposition~\ref{prp:condexp}; it
follows from this, \eqref{eqn:homstate} and \eqref{eqn:sstateker} that
\[
G + G^* + G^* \Delta G = \Delta^\perp \ppi( F_1 ) \Delta^\perp + %
\Delta^\perp \ppi( F_2 ) \Delta + %
\Delta \ppi( F_2^* ) \Delta^\perp + \Delta \ppi( F_3 ) \Delta,
\]
where
\begin{align*}
F_1 & := \diag\bigl( F + F^* + %
\diag^\perp( F^* ) \diag^\perp( F ) \bigr), \quad
F_2 := \wt{P}_0 ( \diag^\perp( F + F^* ) + \diag^\perp( F^* ) %
\wt{P}_0^\perp F \wt{P}_0^\perp ) \\[1ex]
\text{and} \quad F_3 & := %
\wt{P}_0^\perp ( F + F^* + F^* \wt{P}_0^\perp F ) \wt{P}_0^\perp,
\end{align*}
so
\[
G + G^* + G^* \Delta G = 0 \iff %
\Delta^\perp \ppi( F_1 ) \Delta^\perp = %
\Delta^\perp \ppi( F_2 ) \Delta = %
\Delta \ppi( F_3 ) \Delta = 0.
\]
Let
\smash[b]{$F = \left[ \begin{smallmatrix} A + B \ & C \\[0.5ex]
D & E \end{smallmatrix} \right]$},
where $\diag_0( A ) = A$ and $\diag_0^\perp( B ) = B$; after some
working, it may be shown that
\begin{align*}
F_1 & = \begin{bmatrix}
 \sstate_0( A + A^* + B^* B + D^* D ) & 0 \\[1ex]
 0 & 0 \end{bmatrix}, \quad %
F_2 = \begin{bmatrix} B + B^* & C + D^* + D^* E \\[1ex]
0 & 0 \end{bmatrix} \\[1ex]
\text{and} \quad F_3 & = %
\begin{bmatrix} 0 & 0 \\[1ex] 0 & E + E^* + E^* E \end{bmatrix}.
\end{align*}
If $X \in \bop{\statesp}{}$ then
\[
E^\vac \ppi( F_3 ) E_{[ X ]} = %
\sstate\bigl( \wt{P}_0 F_3 ( \id_\ini \otimes X ) \bigr) = 0 = %
\sstate\bigl( ( \id_\ini \otimes X )^* F_3 \wt{P}_0 \bigr) = %
E^{[ X ]} \ppi( F_3 ) E_\vac,
\]
so $\Delta^\perp \ppi( F_3 ) = \ppi( F_3 ) \Delta^\perp = 0$ and
therefore $\Delta \ppi( F_3 ) \Delta = \ppi( F_3 )$. Hence
\[
\Delta \ppi( F_3 ) \Delta = 0 \iff E + E + E^* E = 0 \iff %
V^* V = \id_{\ini \otimes \statesp_0^\perp},
\]
where $V = E + I_{\ini \otimes \statesp_0^\perp}$. Next, note that
\[
E^\vac \ppi( F_2 ) E_\vac = %
( \sstate \comp \diag )\bigl( \diag^\perp( F + F^* ) \bigr) + %
\sstate( \diag^\perp( F^* ) \diag( F ) %
\wt{P}_0^\perp F \wt{P}_0^\perp \wt{P}_0 ) = 0,
\]
so $\Delta^\perp \ppi( F_2 ) \Delta = \Delta^\perp \ppi( F_ 2 )$. If
$Y = \left[\begin{smallmatrix} Y_0^0 & Y_\times^0 \\[0.5ex]
 Y_0^\times & Y_\times^\times \end{smallmatrix}\right] \in %
\bop{\statesp}{}$
then
\[
E^\vac \ppi( F_2 ) E_{[ Y ]} = \begin{bmatrix}
 \sstate_0\bigl( ( B + B^* ) ( \id_\ini \otimes Y_0^0 ) + %
( C + D^* V ) ( \id_\ini \otimes Y_0^\times ) \bigr) & 0 \\[1ex]
 0 & 0
\end{bmatrix},
\]
therefore $\Delta^\perp \ppi( F_2 ) \Delta = 0$ if and only if
\[
 \sstate_0\bigl( ( B + B^* ) ( \id_\ini \otimes Y_0^0 ) \bigr) = %
\sstate_0\bigl( ( C + D^* V ) %
( \id_\ini \otimes Y_0^\times ) \bigr) = 0
\]
for all $Y_0^0 \in \bop{\statesp_0}{}$ and
$Y_0^\times \in \bop{\statesp_0}{\statesp_0^\perp}$. Suppose
$T \in \bop{\ini \otimes \statesp_0}{}$ is such that
\[
 \sstate_0\bigl( T ( \id_\ini \otimes Y_0^0 ) \bigr) = 0 \qquad %
\text{for all } Y_0^0 \in \bop{\statesp_0}{};
\]
with the notation as in Lemma~\ref{lem:faithfulstate},
\[
0 = \langle u, \sstate_0\bigl( T %
( \id_\ini \otimes Y_0^0 ) \bigr) v \rangle = %
\sum_{j \in J} \lambda_j \langle u \otimes e_j, T %
( v \otimes Y_0^0 e_j ) \rangle \qquad \text{for all } u, v \in \ini,
\]
where $\lambda_j > 0$ for all $j \in J$ and $\{ e_j : j \in J \}$ is
an orthonormal basis for $\statesp_0$. With~$Y_0^0 = \dyad{y}{e_j}$
for arbitrary $y \in \statesp_0$ and $j \in J$, this gives that
\[
T\bigl( \ini \algten \statesp_0 \bigr) \perp %
\ini \algten \lin\{ e_j : j \in J \}
\]
and therefore $T = 0$. Taking $T = B + B^*$ and
$T = ( C + D^* V ) \dyad{x}{y}$, where $x \in \statesp_0^\perp$ 
and~$y \in \statesp_0$ are arbitrary, it follows that
\[
\Delta^\perp \ppi( F_2 ) \Delta = 0 \iff B + B^* = 0 %
\quad \text{and} \quad C + D^* V = 0.
\]
Finally,
\[
\Delta^\perp \ppi( F_1 ) \Delta^\perp = 0 \iff %
\sstate_0( A + A^* ) = -\sstate_0( B^* B + D^* D )
\]
and the result now follows from \cite[Theorem~7.5]{LiW00}.
\end{proof}

\begin{remark}
By definition, the adapted $\ini$~process $X$ satisfies
equation~\eqref{eqn:rightqsde} if and only if
\[
\langle u\evec{f}, %
( X_t - \id_{\ini \otimes \fock} ) v \evec{g} \rangle = %
\int_0^t \langle u \evec{f}, %
( E^{\widehat{f( s )}} G E_{\widehat{g( s )}} \otimes \id_\fock ) %
X_s v \evec{g} \rangle \std s
\]
for all $u$,~$v \in \ini$, $f$,~$g \in \elltwo$ and $t \in \R_+$.
\end{remark}

\begin{notation}
Define the \emph{decapitated exponential functions}
\[
\exp_1( z ) = \sum_{n = 1}^\infty \frac{z^{n - 1}}{n!} %
\quad \text{and} \quad %
\exp_2( z ) = \sum_{n = 2}^\infty \frac{z^{n - 2}}{n!} \qquad %
\text{for all } z \in \C.
\]
Note that
\begin{equation}\label{eqn:expids}
\exp_1( z ) \exp( -z ) = \exp_1( -z ) \quad \text{and} \quad %
\exp_1( z ) \exp_1( -z ) = \exp_2( z ) + \exp_2( -z )
\end{equation}
for all $z \in \C$.
\end{notation}

\begin{theorem}\label{thm:exhp}
Let the \emph{total Hamiltonian}
\[
H_t( \tau ) = \begin{bmatrix}
 H_d + \tau^{-1 / 2} H_o & \tau^{-1/ 2} L^* \\[1ex]
 \tau^{-1 / 2} L & \tau^{-1} H_\times \end{bmatrix} + %
\begin{bmatrix}
 R_0^0( \tau ) & \tau^{-1 / 2 } R_\times^0( \tau ) \\[1ex]
 \tau^{-1 / 2} R_0^\times( \tau ) & %
\tau^{-1} R_\times^\times( \tau ) \end{bmatrix} \in %
\vna \uwkten \bop{\statesp}{}
\]
for all $\tau > 0$, where
\begin{mylist}
\item[\tu{(i)}] the self-adjoint operators $H_d$,
$H_o \in \vna \uwkten \bop{\statesp_0}{}$ are such that
$\diag_0( H_d ) = H_d$ and $\diag_0^\perp( H_o )= H_o$,
\item[\tu{(ii)}]
$L \in \vna \uwkten \bop{\statesp_0}{\statesp_0^\perp}$,
\item[\tu{(iii)}] $H_\times \in \vna \uwkten \bop{\statesp_0^\perp}{}$
is self adjoint
\item[and \tu{(iv)}] the functions $R_0^0$, $R_0^\times$, $R_\times^0$
and $R_\times^\times$ are such that
\end{mylist}
\begin{align}
\text{\tu{(a)} } & R_0^0( \tau ) = R_0^0( \tau )^*, \ %
R_\times^0( \tau ) = R_0^\times( \tau )^* \ \text{ and } \ %
R_\times^\times( \tau ) = R_\times^\times( \tau )^* \ %
\text{for all } \tau > 0, \nonumber \\[1ex] \label{eqn:errorbound}
\text{\tu{(b)} } & \text{the function } %
\tau \mapsto \| R_0^0( \tau ) \| %
\text{ is bounded on a neighbourhood of } 0 \\[1ex]
\label{eqn:errorslimits} \text{and \tu{(c)} } & %
\lim_{\tau \to 0 } \diag_0\bigl( R_0^0( \tau ) \bigr) = %
\lim_{\tau \to 0 } R_0^\times( \tau ) = %
\lim_{\tau \to 0 } R_\times^\times( \tau ) = 0, \\
& \text{where the convergence holds in the norm topology}. \nonumber
\end{align}
Then the completely isometric map
\[
\Phi( \tau ) : \vna \to \vna \uwkten \bop{\statesp}{}; \ %
a \mapsto %
( a \otimes \id_\statesp ) \exp\bigl( -\I \tau H_t( \tau ) \bigr)
\]
is such that
$\| \nnmodf{\diag}{\Phi( \tau )}{\tau} - \Psi \|_\cb \to 0$
as $\tau \to 0$, where
\[
\Psi : \vna \to \vna \uwkten \bop{\statesp}{}; \ %
a \mapsto ( a \otimes \id_\statesp ) F
\]
and
\begin{equation}\label{eqn:hpFgen}
F = \begin{bmatrix}
 -\I ( H_d + H_o ) - %
\diag_0( \hlf H_o^2 + L^* \exp_2( -\I H_\times ) L ) & %
 -\I L^* \exp_1( -\I H_\times ) \\[1ex]
-\I \exp_1( -\I H_\times ) L & %
\exp( -\I H_\times ) - I_{\ini \otimes \statesp_0^\perp}
\end{bmatrix}.
\end{equation}
Consequently, $J^{\ppi \comp \Phi( \tau ), \tau} \ttocb j^\psi$, where
the completely bounded map
$\psi : \vna \to \vna \uwkten \bop{\mmul}{}$ is as defined
in~\eqref{eqn:psidef}. The adapted $\ini$~process
$( U_t := j^\psi_t( \id_\ini ) )_{t \in \R_+}$ is unitary for all
$t \in \R_+$ and such that
$j^\psi_t( a ) = ( a \otimes \id_\mmul ) U_t$ for all $t \in \R_+$
and $a \in \vna$.
\end{theorem}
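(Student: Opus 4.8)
The plan is to exploit the fact that, like $\Psi$, the modification $\nmodf{\Phi(\tau)}{\tau}$ acts by right multiplication. Since $\diag$, $\diag^\perp$ and the projections $\wt{P}_0$, $\wt{P}_0^\perp$ are all left $\bop{\ini}{}$-module maps, they commute with $a \mapsto a \otimes \id_\statesp$, so \eqref{eqn:modf} gives $\nmodf{\Phi(\tau)}{\tau}( a ) = ( a \otimes \id_\statesp ) F_\tau$, where $W_\tau := \exp\bigl( -\I \tau H_t( \tau ) \bigr)$ and
\[
F_\tau := \wt{P}_0 ( \tau^{-1} \diag + \tau^{-1/2} \diag^\perp )( W_\tau - \id ) \wt{P}_0 + \tau^{-1/2} \wt{P}_0 W_\tau \wt{P}_0^\perp + \tau^{-1/2} \wt{P}_0^\perp W_\tau \wt{P}_0 + \wt{P}_0^\perp ( W_\tau - \id ) \wt{P}_0^\perp.
\]
For any $T \in \vna \uwkten \bop{\statesp}{}$ the right-multiplier $a \mapsto ( a \otimes \id_\statesp ) T$ has completely bounded norm exactly $\| T \|$ (take $a = \id_\ini$ for the lower bound, and amplify for the upper). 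Hence $\| \nmodf{\Phi(\tau)}{\tau} - \Psi \|_\cb = \| F_\tau - F \|$, and it suffices to prove that $F_\tau \to F$ in operator norm as $\tau \to 0$.

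For the norm estimate I would perform a block expansion of $W_\tau$ with respect to the decomposition $\ini \otimes \statesp = ( \ini \otimes \statesp_0 ) \oplus ( \ini \otimes \statesp_0^\perp )$ induced by $\wt{P}_0$. Writing $N_\tau := -\I \tau H_t( \tau )$, the hypotheses make the $\statesp_0^\perp \times \statesp_0^\perp$ block $\wt{P}_0^\perp N_\tau \wt{P}_0^\perp = -\I( H_\times + R_\times^\times( \tau ) )$ of order~$1$, while every other block of $N_\tau$ is $O( \tau^{1/2} )$. Splitting $N_\tau$ into its block-diagonal and block-off-diagonal parts and applying the Duhamel (Dyson) series in the off-diagonal part produces a finite expansion, up to a remainder collecting all terms with three or more off-diagonal hops. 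The surviving time integrals are precisely the decapitated exponentials,
\[
\int_0^1 e^{-\I( 1 - s ) H_\times} \std s = \exp_1( -\I H_\times ), \qquad
\int_0^1\!\!\int_0^{s_2} e^{-\I( s_2 - s_1 ) H_\times} \std s_1 \std s_2 = \exp_2( -\I H_\times ).
\]
Feeding in the scalings $\tau^{-1}$ and $\tau^{-1/2}$ prescribed by $F_\tau$, together with the $\diag_0$-grading, the one-hop contributions give the off-diagonal entries $-\I \exp_1( -\I H_\times ) L$ and $-\I L^* \exp_1( -\I H_\times )$ of~$F$, the bottom-right block converges to $\exp( -\I H_\times ) - \id$, and the top-left block converges to $-\I( H_d + H_o ) - \diag_0( \hlf H_o^2 + L^* \exp_2( -\I H_\times ) L )$; the error functions are eliminated using \eqref{eqn:errorbound} and \eqref{eqn:errorslimits}. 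This reproduces \eqref{eqn:hpFgen}.

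The hard part is the remainder control: because the top-left block is multiplied by $\tau^{-1}$, every discarded contribution must be shown to be genuinely $o( \tau )$ in norm before scaling. I would count a factor $\tau^{1/2}$ for each off-diagonal hop and a further $\tau^{1/2}$ for each visit to the top-left diagonal block (since $\wt{P}_0 N_\tau \wt{P}_0 = O( \tau^{1/2} )$), using that the bottom-right propagator $\exp\bigl( -\I( H_\times + R_\times^\times( \tau ) ) \bigr)$ is unitary and hence norm-bounded by~$1$ uniformly in $\tau$. One must also check that the part of the top-left block lying off the range of $\diag_0$ receives only the scaling $\tau^{-1/2}$ and therefore vanishes in the limit.

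Finally, $J^{\ppi \comp \Phi( \tau ), \tau} \ttocb j^\psi$ follows from Theorem~\ref{thm:main}, since $\Phi( \tau )$ is completely isometric and $\Psi$ is completely bounded. To see that $U_t := j^\psi_t( \id_\ini )$ is unitary, I would verify that $F$ in \eqref{eqn:hpFgen} has the form \eqref{eqn:fmat} with $V = \exp( -\I H_\times )$, which is unitary because $H_\times$ is self adjoint. Setting $D := -\I \exp_1( -\I H_\times ) L$ and using the first identity in \eqref{eqn:expids} gives $-D^* V = -\I L^* \exp_1( -\I H_\times )$, matching the off-diagonal entry; the second identity yields $\exp_2( -\I H_\times ) + \exp_2( \I H_\times ) = \exp_1( \I H_\times ) \exp_1( -\I H_\times )$, so that the Hermitian part of the top-left block equals $-\hlf K$ with $K = \diag_0( H_o^2 + D^* D )$, whence $\sstate_0( K ) = \sstate_0( H_o^2 + D^* D )$ since $\sstate_0 \comp \diag_0 = \sstate_0$, while the residual anti-Hermitian part is absorbed into a self-adjoint, $\diag_0$-diagonal redefinition of $H_d$. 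Theorem~\ref{thm:hp}, in its unitary case, then gives that the driving process $U$ is unitary and that $j^\psi_t( a ) = ( a \otimes \id_\mmul ) U_t$ for all $t \in \R_+$ and $a \in \vna$.
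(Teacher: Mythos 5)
Your proposal is correct, and its overall architecture is the same as the paper's: reduce everything to operator-norm convergence $F_\tau \to F$ by exploiting the right-multiplication structure (the paper does this implicitly by writing $\nmodf{\Phi( \tau )}{\tau}( a ) = ( a \otimes \id_\statesp ) \sum_{n \ge 1} \tfrac{( -\I )^n}{n!} m( G^n )$ with $G := \tau H_t( \tau )$), then invoke Theorem~\ref{thm:main}, and finally verify the unitarity conditions of Theorem~\ref{thm:hp} exactly as you do, with $V = \exp( -\I H_\times )$, $D = -\I \exp_1( -\I H_\times ) L$, $K = \diag_0( H_o^2 + D^* D )$ and the anti-Hermitian residue absorbed into a redefined $H_d$ via the identities \eqref{eqn:expids}. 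Where you genuinely diverge is in the central expansion: the paper decomposes $G = A + \tau^{1 / 2} B + \tau C$ (bottom-right block, off-diagonal-plus-$H_o$ part, remaining top-left part), uses the algebraic relations $A C = C A = 0$ and $A B A = 0$ to collapse $G^n$, obtains the decapitated exponentials as the resummed power series $\sum_n ( -\I )^n A^{n - 1} / n!$ and $\sum_n ( -\I )^n A^{n - 2} / n!$, and controls the remainder by the explicit bound $\| D_n' \| \le ( 5 n - 3 + 2 \cdot 3^{n + 1} ) c^n$ together with an $M$-test; your Duhamel--Dyson expansion in the block-off-diagonal part instead produces $\exp_1$ and $\exp_2$ as time-ordered integrals and replaces the combinatorial bookkeeping of $m( G^n )$ by the unitarity of the full and block-diagonal propagators, which makes the three-hop remainder $O( \tau^{3 / 2} )$ at a stroke, at the cost of a separate Taylor expansion of the top-left diagonal propagator to reach the $o( \tau )$ precision forced by the $\tau^{-1} \diag_0$ scaling. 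Two small points to tighten: the assertion that the part of the top-left block off the range of $\diag_0$ ``vanishes in the limit'' should say that only its $O( \tau )$ corrections vanish after the $\tau^{-1 / 2}$ scaling --- the leading term $-\I \tau^{1 / 2} H_o$ survives and supplies the $-\I H_o$ in \eqref{eqn:hpFgen} --- and the elimination of $R_0^0( \tau )$ really does need both hypotheses, \eqref{eqn:errorslimits} for the $\tau^{-1}$-scaled $\diag_0$ component and \eqref{eqn:errorbound} for the $\tau^{-1 / 2}$-scaled $\diag_0^\perp$ component, as your parenthetical remark suggests but does not spell out.
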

\begin{proof}
Let $G := \tau H_t( \tau ) = A + \tau^{1 / 2} B + \tau C$, where
\[
A = \begin{bmatrix} 0 & 0 \\[1ex]
 0 & H_\times + R_\times^\times( \tau ) \end{bmatrix}, \quad %
B = \begin{bmatrix} H_o & L^* + R_\times^0( \tau ) \\[1ex]
 L + R_0^\times( \tau ) & 0 \end{bmatrix} \quad \text{and} \quad %
C = \begin{bmatrix} H_d + R_0^0( \tau ) & 0 \\[1ex]
 0 & 0 \end{bmatrix};
\]
by \eqref{eqn:errorbound} and \eqref{eqn:errorslimits}, there exists
$\tau_0 \in ( 0, 1 )$ such that
\[
c := %
\sup\{ \| A \|, \| B \|, \| C \| : 0 < \tau < \tau_0 \} < \infty.
\]
Then
\[
\nmodf{\Phi( \tau )}{\tau}( a ) = ( a \otimes \id_\statesp ) %
\smash[b]{\sum_{n = 1}^\infty \frac{( -\I )^n}{n!} m( G^n )} %
\qquad \text{for all } \tau > 0,
\]
where the linear map
\[
m : T \mapsto \wt{P}_0 %
( \tau^{-1} \delta + \tau^{-1 / 2} \delta^\perp )( T ) \wt{P}_0 + %
\tau^{-1 / 2} \wt{P}_0 T \wt{P}_0^\perp + %
\tau^{-1 / 2} \wt{P}_0^\perp T \wt{P}_0^\perp + %
\wt{P}_0^\perp T \wt{P}_0^\perp.
\]
Note that
\begin{multline*}
G^n = A^n + %
\smash{\tau^{1 / 2} \sum_{j = 0}^{n - 1} A^j B A^{n - 1 - j} + %
\tau \sum_{j = 0}^{n - 1} A^j C A^{n - 1 - j}} \\
 + \tau \sum_{j = 0}^{n - 2} %
\sum_{k = 0}^{n - 2 - j} A^j B A^k B A^{n - 2 - j - k} + %
\tau^{3 / 2} D_n
\end{multline*}
for all $n \ge 1$, where $\| D_n \| \le 3^n c^n$ for
all~$\tau \in ( 0, \tau_0 )$. As $A C = C A = 0$ and $A B A = 0$, this
simplifies to give that
\[
G^n = A^n + \tau^{1 / 2} ( B A^{n - 1} + A^{n - 1} B ) + %
\tfn{n \ge 3} \tau B A^{n - 2} B + %
\tau \sum_{j = 0}^{n - 2} A^j B^2 A^{n - 2 - j} + \tau^{3 / 2} D_n
\]
for all $n \ge 2$. (Here and below, the expression $\tfn{P}$ has the
value $1$ if $P$ is true and $0$ if $P$ is false.) Furthermore, if
$p \ge 1$, $0 \le j \le p$ and
\[
r_\tau( T ) := \wt{P}_0 \diag^\perp( T ) \wt{P}_0 + %
\wt{P}_0 T \wt{P}_0^\perp + \wt{P}_0^\perp T \wt{P}_0 + %
\tau^{1 / 2} \wt{P}_0^\perp T \wt{P}_0^\perp
\]
then
\begin{alignat*}{2}
& m( A^p )= A^p, & &
m\big( \tau^{1 / 2} ( B A^p + A^p B ) \bigr) = %
B A^p + A^p B, \\[1ex]
& m( \tau B^2 ) = \diag( B^2 ) + \tau^{1 / 2} r_\tau( B^2 ), & &
m( \tau B A^p B ) = \diag( B A^p B ) + %
\tau^{1 / 2} \diag^{\perp}( B A^p B ) \\[1ex]
\text{and} \quad & m( \tau A^j B^2 A^{p - j} ) = %
\tau^{1 / 2} r_\tau( A^j B^2 A^{p - j} ).
\end{alignat*}
Hence, omitting the argument $\tau$ from $R_0^0$, $R_0^\times$,
$R_\times^0$ and $R_\times^\times$ for brevity,
\[
m( G ) = \begin{bmatrix}
 H_d + H_o & L^* + R_\times^0 \\[1ex]
 L + R_0^\times & H_\times + R_\times^\times
\end{bmatrix} + D_1', \qquad \text{where} \quad D_1' = %
\begin{bmatrix}
 \diag_0( R_0^0 ) + \tau^{1 / 2} \diag_0^\perp( R_0^0 ) & 0 \\[1ex]
 0 & 0 \end{bmatrix},
\]
and
\begin{align*}
m( G^n ) & = A^n + B A^{n - 1} + A^{n - 1} B + %
\diag( B A^{n - 2} B ) + \tau^{1 / 2} D_n' \\[1ex]
 & = \begin{bmatrix} \tfn{n = 2} H_o^2 + %
( L^* + R_\times^0 )( H_\times + R_\times^\times )^{n - 2}%
( L + R_0^\times ) & ( L^* + R_\times^0 )%
( H_\times + R_\times^\times )^{n - 1} \\[1ex]
( H_\times + R_\times^\times )^{n - 2}( L + R_0^\times ) & %
( H_\times + R_\times^\times )^n
\end{bmatrix} \\
 & \qquad +\tau^{1 / 2} D_n'
\end{align*}
for all $n \ge 2$, where
\[
D_n' = \tfn{n \ge 3} \diag^\perp( B A^{n - 2} B ) + %
\sum_{j = 0}^{n - 2} r_\tau( A^j B^2 A^{n - 2 - j} ) + m( \tau D_n );
\]
in particular, if $n \ge 2$ and $\tau \in ( 0, \tau_0 )$ then
\[
\| D_n' \| \le 2 c^n + 5 ( n - 1 ) c^n + 6 ( 3 c )^n = %
 ( 5 n - 3 + 2 \cdot 3^{n + 1} ) c^n.
\]
An $M$-test argument now gives that
$\| \nmodf{\Phi( \tau )}{\tau} - \Psi\|_\cb \to 0$ as $\tau \to 0$ and
therefore $J^{\ppi \comp \Phi( \tau ), \tau} \ttocb j^\psi$, by
Theorem~\ref{thm:main}. Using the identities \eqref{eqn:expids}, it is
readily verified that~$F$ satisfies the unitarity conditions of
Theorem~\ref{thm:hp}; in the notation of that theorem, but with~$H_d$
and~$H_o$ there replaced by~$H_d'$ and~$H_o'$,
\begin{alignat*}{2}
H_d' & = H_d - \mbox{$\frac{\I}{2}$} \diag_0( L^* \bigl( %
\exp_2( -\I H_\times ) - \exp_2( \I H_\times ) \bigr) L ), %
& \qquad H_o' & = H_o, \\[1ex]
K & = \diag_0( H_o^2 + L^* \bigl( \exp_2( -\I H_\times ) + %
\exp_2( \I H_\times ) \bigr) L ), %
& D & = -\I \exp_1( -\I H_\times ) L \\[1ex]
\text{and} \quad V & = \exp( -\I H_\times ). \tag*{\qedhere}
\end{alignat*}
\end{proof}

\begin{remark}
When the state $\state$ is faithful or a vector state,
Theorem~\ref{thm:exhp} is a generalisation of \cite[Theorem~4]{Blt10b}
or \cite[Theorem~19]{AtP06}, respectively; for the latter case, see
also \cite[Theorem~4.1]{Gou04}.
\end{remark}

The following example is the simplest which illustrates the various
features of Theorem~\ref{thm:exhp}.

\begin{example}
Suppose $\statesp = \C^3$ and take the density matrix
\[
\State = \begin{bmatrix}
 \lambda_1 & 0 & 0 \\[1ex]
 0 & \lambda_2 & 0 \\[1ex]
 0 & 0 & 0
\end{bmatrix} \in M_3( \C ), \qquad \text{where }
\lambda_1, \lambda_2 \in ( 0, 1 ) \text{ are such that }
\lambda_1 + \lambda_2 = 1.
\]
Then $\statesp_0 = \C^2$; let the
$\state_0$-preserving conditional expectation
\[
d_0 : M_2( \C ) \to M_2( \C ); \ %
\begin{bmatrix} x & y \\[0.5ex]
 z & w \end{bmatrix} \mapsto %
\begin{bmatrix}
 x & 0 \\[0.5ex]
 0 & w
\end{bmatrix}.
\]
Let $e_{i j} \in M_3( \C )$ be the elementary matrix with $1$ in the
$( i, j )$~entry and $0$ elsewhere, let
\[
f_{i j} = \lambda_j^{-1 / 2} e_{i j} \qquad \text{for } i = 1, 2, 3 %
\text{ and } j = 1, 2,
\]
and let $\{ e_i : i = 1, 2, 3 \}$ be the canonical basis of $\C^3$, so
that~$\mmul$ has the basis
\[
\{ [ f_{i j} ] = e_i \otimes \conj{e_j} : i = 1, 2, 3, \ j = 1, 2 \}.
\]
Note also that $d( f_{i j} ) = 0$ unless $i = j = 1$ or $i = j = 2$,
and $\{ [ d^\perp( X ) ] : X \in \ker \state \}$ has basis
\[
\bigl\{ [ d^\perp( f_{i j} ) ] = e_i \otimes \conj{e_j} : %
( i, j ) \in \{ ( 1, 2 ), ( 2, 1 ), ( 3, 1 ), ( 3, 2 ) \} \bigr\};
\]
similarly, $P_0 f_{3 k} = 0$ for $k = 1$ and $k = 2$, and
$\{ [ P_0^\perp X ] : X \in \ker \state \}$ has the basis
\[
\{ [ P_0^\perp f_{3 k} ] = e_3 \otimes \conj{e_k} : k = 1, 2 \}.
\]
If $H_d$, $H_o$, $L$ and $H_\times$ are as in Theorem~\ref{thm:exhp}
then
\[
H_d = \begin{bmatrix} b & 0 \\[0.5ex]
 0 & c \end{bmatrix}, \, %
H_o = \begin{bmatrix} 0 & g^* \\[0.5ex]
 g & 0 \end{bmatrix} \in M_2( \vna ), \quad
L = \begin{bmatrix} l & m \end{bmatrix} \in M_{1, 2}( \vna ) %
\quad \text{and} \quad H_\times = h \in \vna,
\]
where $b$, $c$, $h \in \vna$ are self adjoint. With the notation of
Theorem~\ref{thm:exhp},
\[
H_t( \tau ) = \begin{bmatrix}
 b & \tau^{-1 / 2} g^* & \tau^{-1 / 2} l^* \\[1ex]
 \tau^{-1 / 2} g & c & \tau^{-1 / 2} m^* \\[1ex]
 \tau^{-1 / 2} l & \tau^{-1 / 2} m & \tau^{-1} h
\end{bmatrix} \qquad \text{for all } \tau > 0
\]
and
\[
F = %
\begin{bmatrix}
 -\I b - \hlf g^* g - l^* \exp_2( -\I h ) l & %
-\I g^* & -\I l^* \exp_1( -\I h ) \\[1ex]
-\I g & -\I c - \hlf g^* g - m^* \exp_2( -\I h ) m & %
-\I m^* \exp_1( -\I h ) \\[1ex]
 -\I \exp_1( -\I h ) l & -\I \exp_1( -\I h ) m & %
\exp( -\I h ) - \id_\ini 
\end{bmatrix}.
\]
As
$\vac = %
\lambda_1^{1 / 2} e_1 \otimes \conj{e_1} + %
\lambda_2^{1 / 2} e_2 \otimes \conj{e_2}$,
it follows that
\begin{align*}
E^\vac \psi( a ) E_\vac & = a %
( \lambda_1 F_{1 1} + \lambda_2 F_{2 2} ), \\[1ex]
E^{[ f_{i j} ]} \psi( a ) E_\vac & = %
\lambda_j^{1 / 2} a F_{i j} \\[1ex]
E^\vac \psi( a ) E_{[ f_{i j} ]} & = \lambda_j^{1 / 2} a F_{j i} \\[1ex]
\text{and} \quad E^{[ f_{3 k} ]} \psi( a ) E_{[ f_{3 l} ]} & = %
\tfn{k = l} \, a F_{3 3}
\end{align*}
for all $( i, j ) \in \{ ( 1, 2 ), ( 2, 1 ), ( 3, 1 ) , ( 3, 2 ) \}$
and $k$, $l \in \{ 1, 2 \}$, where~$F_{p q}$ denotes the
$( p, q )$~entry of the matrix~$F$ and $\tfn{k = l}$ equals $1$ if
$k = l$ and $0$ otherwise.

In particular, there are 10 independent quantum noises in the quantum
stochastic differential equations satisfied by the limit
cocycle~$j^\psi$ and the unitary process $U$ given by
Theorem~\ref{thm:exhp}, so the upper bound \eqref{num:indnoise} is not
achieved: in this case, the upper bound equals
\[
2 ( 3 \times 2 - 2 ) + ( 3 - 2 )^2 2^2 = 12.
\]
\end{example}

\subsection{Evans--Hudson evolutions}

The following result is a generalisation of \cite[Remark~8]{Blt10b}.

\begin{theorem}\label{thm:eh}
For any $F \in \vna \uwkten \bop{\statesp}{}$, let
\begin{multline}\label{eqn:ehPsidef}
\Psi : \vna \to \vna \uwkten \bop{\statesp}{}; \ %
a \mapsto ( a \otimes \id_\statesp ) F + %
F^* ( a \otimes \id_\statesp ) + %
\diag\bigl( \diag^\perp( F )^* %
( a \otimes \id_\statesp ) \diag^\perp( F ) \bigr) \\
+ F^* \wt{P}_0^\perp ( a \otimes \id_\statesp ) \wt{P}_0^\perp F - %
\wt{P}_0 F^* \wt{P}_0^\perp ( a \otimes \id_\statesp ) %
\wt{P}_0^\perp F \wt{P}_0
\end{multline}
and let $G \in \vna \uwkten \bop{\mmul}{}$ be given
by~\eqref{eqn:Gdef}. Then $\psi : \vna \to \vna \uwkten \bop{\mmul}{}$
as defined in~\eqref{eqn:psidef} is such that
\begin{equation}\label{eqn:ehpsi}
\psi( a ) = ( a \otimes \id_\mmul ) G + %
G^* ( a \otimes \id_\mmul ) + %
G^* \Delta ( a \otimes \id_\mmul ) \Delta G \qquad %
\text{for all } a \in \vna.
\end{equation}
The cocycle $j^\psi$ is such that
$j^\psi_t( a ) = X_t^* ( a \otimes \id_\mmul ) X_t$ for all
$a \in \vna$ and
$t \in \R_+$, where the adapted $\ini$~process $X = ( X_t )_{t \ge 0}$
satisfies the right Hudson--Parthasarathy
equation~\eqref{eqn:rightqsde}.
\end{theorem}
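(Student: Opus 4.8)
The plan is to prove the theorem in two stages: first establish the purely algebraic identity \eqref{eqn:ehpsi} expressing $\psi$ in Evans--Hudson form, and then deduce the conjugation structure of $j^\psi$ from the quantum It\^o formula together with the uniqueness of the cocycle generated by $\psi$.

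For the first stage, I would exploit the fact that both the definition \eqref{eqn:psidef} of $\psi(a)$ and the right-hand side of \eqref{eqn:ehpsi} (after substituting \eqref{eqn:Gdef} for $G$) decompose into the same four blocks relative to $\id_{\ini\otimes\mmul} = \Delta^\perp + \Delta$; one simply compresses each side by $\Delta^\perp$ or $\Delta$ on the left and on the right and compares. The standing tools are: the unitality and homomorphism property of $\ppi$, so that $\ppi(a\otimes\id_\statesp) = a\otimes\id_\mmul =: \mathsf{a}$ and $\ppi$ respects products and adjoints; the commutation of $\mathsf{a}$ with $\Delta$ and $\Delta^\perp$ (both of the form $\id_\ini\otimes(\cdot)$); the bimodule identity $\diag^\perp((a\otimes\id_\statesp)T) = (a\otimes\id_\statesp)\diag^\perp(T)$; and the projection comparisons $\Delta^\perp\le\ppi(\wt{P}_0)$ and $\ppi(\wt{P}_0^\perp)\le\Delta$, both of which follow from $\vac\in\statesp_0\otimes\conj{\statesp_0}$. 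Feeding the first-order part $(a\otimes\id_\statesp)F + F^*(a\otimes\id_\statesp)$ of $\Psi(a)$ through \eqref{eqn:psidef} then reproduces the first-order part $\mathsf{a}G + G^*\mathsf{a}$ of \eqref{eqn:ehpsi} block by block, essentially because $\diag^\perp$ slides past $\mathsf{a}$ and $\ppi$ converts each block of $G$ in \eqref{eqn:Gdef} into the matching compression of $\ppi(F)$ or $\ppi(\diag^\perp F)$.

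The crux, and the step I expect to be the main obstacle, is the second-order matching: one must show that the quadratic corrections in \eqref{eqn:ehPsidef}, namely $\diag(\diag^\perp(F)^*\mathsf{a}_\statesp\diag^\perp(F))$, $F^*\wt{P}_0^\perp\mathsf{a}_\statesp\wt{P}_0^\perp F$ and $-\wt{P}_0 F^*\wt{P}_0^\perp\mathsf{a}_\statesp\wt{P}_0^\perp F\wt{P}_0$ (with $\mathsf{a}_\statesp := a\otimes\id_\statesp$), transform under \eqref{eqn:psidef} into exactly the It\^o term $G^*\Delta\mathsf{a}\Delta G$. Here the identity $\sstate\comp\diag = \sstate$ is decisive: it gives $E^\vac\ppi(\diag^\perp F)E_\vac = \sstate(\diag^\perp F) = 0$, so that $\Delta\ppi(\diag^\perp F)E_\vac = \ppi(\diag^\perp F)E_\vac$, whence the $(\vac,\vac)$ corner $G_{\times0}^*\mathsf{a}G_{\times0}$ of $G^*\Delta\mathsf{a}\Delta G$ collapses, via \eqref{eqn:homstate}, to $E_\vac\sstate\bigl(\diag^\perp(F)^*\mathsf{a}_\statesp\diag^\perp(F)\bigr)E^\vac$. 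The role of the subtracted term $-\wt{P}_0 F^*\wt{P}_0^\perp\mathsf{a}_\statesp\wt{P}_0^\perp F\wt{P}_0$ then becomes clear precisely here: applying $\sstate$ and using \eqref{eqn:sstateker}, the contributions of the two $\wt{P}_0^\perp$-terms cancel on the diagonal block, leaving only the $\diag^\perp(F)$-contribution, which is exactly what the gauge term supplies. The remaining three blocks are handled in the same spirit, the $\wt{P}_0^\perp$-terms now surviving the compressions (while $\diag$-terms are killed by \eqref{eqn:diagker}) to feed the off-diagonal corners and the $\Delta\cdot\Delta$ corner through $G_{\times\times} = \Delta\ppi(\wt{P}_0^\perp F\wt{P}_0^\perp)\Delta$.

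For the second stage, let $X$ be the adapted $\ini$~process generated by $G$ via the right Hudson--Parthasarathy equation \eqref{eqn:rightqsde}, whose existence and uniqueness is the content already invoked in Theorem~\ref{thm:hp} from \cite{LiW00}. Set $k_t(a) := X_t^*(a\otimes\id_\mmul)X_t$; the adjoint of \eqref{eqn:rightqsde} gives $\rd X_t^* = X_t^*\,\rd\Lambda_{G^*}(t)$, and the quantum It\^o product formula yields $\rd\bigl(X_t^*\mathsf{a}X_t\bigr) = X_t^*\,\rd\Lambda_{\mathsf{a}G + G^*\mathsf{a} + G^*\Delta\mathsf{a}\Delta G}(t)\,X_t$, the gauge projection $\Delta$ entering through the It\^o correction $(\rd X_t^*)\,\mathsf{a}\,(\rd X_t)$. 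By the identity \eqref{eqn:ehpsi} just established, the coefficient is $\psi(a)$, so $k$ satisfies the defining integral equation \eqref{eqn:qsdeip} for the cocycle generated by $\psi$; uniqueness of that cocycle then forces $k_t = j^\psi_t$, giving $j^\psi_t(a) = X_t^*(a\otimes\id_\mmul)X_t$ with $X$ solving \eqref{eqn:rightqsde}, as claimed. The one point needing care is the justification of the It\^o computation at the level of matrix elements against exponential vectors rather than for bounded processes, which is exactly where I would appeal to the Lindsay--Wills framework used throughout.
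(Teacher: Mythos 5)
Your proposal is correct and follows essentially the same route as the paper: the first-order terms are disposed of via Theorem~\ref{thm:hp}, linearity and the adjoint, and the identity \eqref{eqn:ehpsi} is reduced to matching the quadratic remainder of \eqref{eqn:ehPsidef} against $G^* \Delta ( a \otimes \id_\mmul ) \Delta G$ block by block using $\sstate \comp \diag = \sstate$, \eqref{eqn:homstate} and \eqref{eqn:sstateker}, with the subtracted $\wt{P}_0 ( \cdot ) \wt{P}_0$ term producing exactly the cancellation you identify on the $(\vac, \vac)$ corner. The only cosmetic difference is that the paper obtains the representation $j^\psi_t( a ) = X_t^* ( a \otimes \id_\mmul ) X_t$ by citing \cite[Theorem~7.4]{LiW00} rather than re-deriving the quantum It\^o product computation you sketch, which is precisely the content of that citation.
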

\begin{proof}
Using Theorem~\ref{thm:hp}, linearity and the adjoint, it suffices to
show that if
\[
\Upsilon( a ) = \diag\bigl( \diag^\perp( F )^* %
( a \otimes \id_\statesp ) \diag^\perp( F ) \bigr) + %
F^* \wt{P}_0^\perp ( a \otimes \id_\statesp ) \wt{P}_0^\perp F - %
\wt{P}_0 F^* \wt{P}_0^\perp ( a \otimes \id_\statesp ) %
\wt{P}_0^\perp F \wt{P}_0
\]
then
\begin{align*}
G^* \Delta ( a \otimes \id_\mmul ) \Delta G & = %
\Delta^\perp ( \ppi \comp \Upsilon )( a ) \Delta^\perp + %
\Delta^\perp ( \ppi \comp \diag^\perp \comp \Upsilon )( a ) %
\Delta \\
& \hspace{6em} + %
\Delta ( \ppi \comp \diag^\perp \comp \Upsilon )( a ) %
\Delta^\perp + \Delta \ppi( \wt{P}_0^\perp \Upsilon( a ) %
\wt{P}_0^\perp ) \Delta \\[1ex]
 & = \Delta^\perp ( \ppi \comp \Upsilon )( a ) \Delta^\perp + %
\Delta^\perp \ppi\bigl( \wt{P}_0 %
( \diag^\perp \comp \Upsilon )( a ) \bigr) %
\Delta \\
& \hspace{6em} + \Delta %
\ppi\bigl( ( \diag^\perp \comp \Upsilon )( a ) \wt{P}_0 \bigr) %
\Delta^\perp + %
\Delta \ppi ( \wt{P}_0^\perp \Upsilon( a ) \wt{P}_0^\perp ) \Delta,
\end{align*}
where the latter equality follows by using \eqref{eqn:homstate}
together with the identities $\sstate \comp \diag^\perp = 0$ and
\[
\sstate( \wt{P}_0 T ) = \sstate( T \wt{P}_0 ) = \sstate( T ) %
\quad \text{for all } T \in \bop{\ini \otimes \statesp}{}.
\]
Letting
$F = \left[ \begin{smallmatrix} X & Y \\[0.5ex]
Z & W \end{smallmatrix}\right]$,
a little algebra shows that
\begin{align}
\diag\bigl( \diag^\perp( F )^* ( a \otimes \id_\statesp ) %
\diag^\perp( F ) \bigr) & = %
\begin{bmatrix}
\diag_0\bigl( \diag_0^\perp( X )^* ( a \otimes \id_{\statesp_0} ) %
\diag_0^\perp( X ) + %
Z^* ( a \otimes \id_{\statesp_0^\perp} ) Z \bigr) & 0 \\[1ex]
 0 & 0
\end{bmatrix}, \nonumber \\[1ex]
F^* \wt{P}_0^\perp ( a \otimes \id_\statesp ) \wt{P}_0^\perp F & = %
\begin{bmatrix} Z^* ( a \otimes \id_{\statesp_0^\perp} ) Z & %
Z^* ( a \otimes \id_{\statesp_0^\perp} ) W \\[1ex]
W^* ( a \otimes \id_{\statesp_0^\perp} ) Z & %
W^* ( a \otimes \id_{\statesp_0^\perp} ) W \end{bmatrix}
\nonumber \\[1ex]\label{eqn:Upsilon}
\text{and} \quad \Upsilon( a ) & = %
\begin{bmatrix}
\diag_0\bigl( \diag_0^\perp( X )^* ( a \otimes \id_{\statesp_0} ) %
\diag_0^\perp( X ) + %
Z^* ( a \otimes \id_{\statesp_0^\perp} ) Z \bigr) & %
Z^* ( a \otimes \id_{\statesp_0^\perp} ) W \\[1ex]
W^* ( a \otimes \id_{\statesp_0^\perp} ) Z & %
W^* ( a \otimes \id_{\statesp_0^\perp} ) W \end{bmatrix}.
\end{align}
Furthermore, with $G$ given by \eqref{eqn:Gdef}, a short calculation
shows that
\begin{multline*}
G^* \Delta ( a \otimes \id_\mmul ) \Delta G \\[1ex]
= \Delta^\perp \ppi\bigl( %
\diag^\perp( F )^* ( a \otimes \id_\statesp ) \diag^\perp( F ) %
\bigr) \Delta^\perp + \Delta^\perp \ppi\bigl( \wt{P}_0 %
\diag^\perp( F )^* ( a \otimes \id_\statesp ) %
\wt{P}_0^\perp F \wt{P}_0^\perp \bigr) \Delta \\[1ex]
+ \Delta \ppi\bigl( %
\wt{P}_0^\perp F^* \wt{P}_0^\perp ( a \otimes \id_\statesp ) %
\diag^\perp( F ) \wt{P}_0 \bigr) \Delta^\perp + %
\Delta \ppi( \wt{P}_0^\perp F^* \wt{P}_0^\perp %
( a \otimes \id_\statesp ) \wt{P}_0^\perp F \wt{P}_0^\perp ) \Delta.
\end{multline*}
Now,
\[
E^\vac \ppi\bigl( \Upsilon( a ) \bigr) E_\vac = %
( \sstate \comp \delta )\bigl( \diag^\perp( F )^* %
( a \otimes \id_\statesp ) \diag^\perp( F ) \bigr) = %
E^\vac \ppi\bigl( \diag^\perp( F )^* ( a \otimes \id_\statesp ) %
\diag^\perp( F ) \bigr) E_\vac
\]
and, since
\[
\diag^\perp( F )^* ( a \otimes \id_\statesp ) \wt{P}_0^\perp F = %
F^* \wt{P}_0^\perp ( a \otimes \id_\statesp ) \wt{P}_0^\perp F = %
F^* \wt{P}_0^\perp ( a \otimes \id_\statesp ) \diag^\perp( F ),
\]
so
\[
\wt{P}_0 \delta^\perp\bigl( \Upsilon( a ) \bigr) = %
\begin{bmatrix} 0 & Z^* ( a \otimes \id_{\statesp_0^\perp} ) W \\[1ex]
0 & 0 \end{bmatrix} = %
\wt{P}_0 \diag^\perp( F )^* ( a \otimes \id_\statesp ) %
\wt{P}_0^\perp F \wt{P}_0^\perp
\]
and
\[
\delta^\perp\bigl( \Upsilon( a ) \bigr) \wt{P}_0 = %
\begin{bmatrix} 0 & 0 \\[1ex]
W^* ( a \otimes \id_{\statesp_0^\perp} ) Z & 0 \end{bmatrix} = %
\wt{P}_0^\perp F^* \wt{P}_0^\perp ( a \otimes \id_\statesp ) %
\diag^\perp( F ) \wt{P}_0.
\]
Finally, as
$\wt{P}_0^\perp \Upsilon( a ) \wt{P}_0^\perp = %
\wt{P}_0^\perp F^* \wt{P}_0^\perp ( a \otimes \id_\statesp ) %
\wt{P}_0^\perp F \wt{P}_0^\perp$,
the first result holds as claimed. The second is an immediate
consequence of \cite[Theorem~7.4]{LiW00}.
\end{proof}

\begin{theorem}\label{thm:ehex}
Let $H_t( \tau )$ be defined as in Theorem~\ref{thm:exhp} for
all~$\tau > 0$. Then the normal $*$-homomorphism
\[
\Phi( \tau ) : \vna \to \vna \uwkten \bop{\statesp}{}; \ %
a \mapsto \exp\bigl( \I \tau H_t( \tau ) \bigr) %
( a \otimes \id_\statesp ) \exp\bigl( -\I \tau H_t( \tau ) \bigr)
\]
is such that $\| \nmodf{\Phi( \tau )}{\tau} - \Psi \|_\cb \to 0$ as
$\tau \to 0$, where $\Psi : \vna \to \vna \uwkten \bop{\statesp}{}$ is
as defined in~\eqref{eqn:ehPsidef} and $F$ is given
by~\eqref{eqn:hpFgen}.

Hence $J^{\ppi \comp \Phi( \tau ), \tau} \ttocb j^\psi$, where the
completely bounded map $\psi : \vna \to \vna \uwkten \bop{\mmul}{}$ is
given by~\eqref{eqn:ehpsi} and $G$ is given by~\eqref{eqn:Gdef}. The
limit cocycle $j^\psi$ is such that
\[
j^\psi_t( a ) = U_t^* ( a \otimes \id_\mmul ) U_t \qquad %
\text{for all }t \in \R_+ \text{ and } a \in \vna,
\]
where the adapted $\ini$~process $( U_t )_{t \in \R_+}$ is unitary for
all $t \in \R_+$ and satisfies the quantum stochastic differential
equation~\eqref{eqn:rightqsde}; in particular, the map $j^\psi_t$ is a
normal $*$-homomorphism for all $t \ge 0$.
\end{theorem}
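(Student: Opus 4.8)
The plan is to prove the theorem in three stages: first establish $cb$-convergence of the modified generator to the Evans--Hudson generator $\Psi$ of~\eqref{eqn:ehPsidef}, then invoke Theorems~\ref{thm:eh} and~\ref{thm:main} to put the cocycle in conjugation form, and finally use the unitarity already verified in Theorem~\ref{thm:exhp} to finish. Throughout I would write $G := \tau H_t( \tau ) = A + \tau^{1 / 2} B + \tau C$ with $A$, $B$, $C$ exactly as in the proof of Theorem~\ref{thm:exhp}, set $W := \exp( -\I G )$, and let $E := W - \id_{\ini \otimes \statesp}$, so that $\Phi( \tau )( a ) = W^* ( a \otimes \id_\statesp ) W$ and
\[
\Phi( \tau )'( a ) = E^* ( a \otimes \id_\statesp ) + ( a \otimes \id_\statesp ) E + E^* ( a \otimes \id_\statesp ) E .
\]
Writing the modification as the linear operator $m$ of the proof of Theorem~\ref{thm:exhp}, so that $\nmodf{\Phi( \tau )}{\tau}( a ) = m\bigl( \Phi( \tau )'( a ) \bigr)$, I would split this into three pieces by linearity.

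First I would treat the two linear pieces. Applied to $( a \otimes \id_\statesp ) E$, the operator $m$ gives precisely the modification of the \emph{right-multiplication} generator of Theorem~\ref{thm:exhp}, so the $M$-test established there yields $m\bigl( ( a \otimes \id_\statesp ) E \bigr) \to ( a \otimes \id_\statesp ) F$ in $cb$-norm, with $F$ given by~\eqref{eqn:hpFgen}. Since the scalings $\tau^{-1}$, $\tau^{-1 / 2}$ are real, $\diag$ is $*$-preserving and $\wt{P}_0$ is self adjoint, the operator $m$ commutes with the adjoint; because the $cb$-norm is adjoint-invariant on the unit ball, applying the same convergence to $a^*$ and transposing gives $m\bigl( E^* ( a \otimes \id_\statesp ) \bigr) \to F^* ( a \otimes \id_\statesp )$ in $cb$-norm. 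These supply the first two terms of~\eqref{eqn:ehPsidef}.

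The quadratic piece $m\bigl( E^* ( a \otimes \id_\statesp ) E \bigr)$ is the crux, and here I would redo the power-series book-keeping. Expanding $E^* = \sum_{p \ge 1} \frac{( \I )^p}{p!} G^p$ and $E = \sum_{q \ge 1} \frac{( -\I )^q}{q!} G^q$, substituting $G = A + \tau^{1 / 2} B + \tau C$ and using $A C = C A = 0$, $A B A = 0$ together with the block structure of $A$, $B$, $C$, I would track which monomials survive each scaling in $m$. Writing $a \otimes \id_\statesp = \wt{P}_0 ( a \otimes \id ) \wt{P}_0 + \wt{P}_0^\perp ( a \otimes \id ) \wt{P}_0^\perp$ and inserting this between $E^*$ and $E$, the unscaled $\wt{P}_0^\perp$-corner retains the $V - \id$ contributions and converges to $\wt{P}_0^\perp F^* \wt{P}_0^\perp ( a \otimes \id ) \wt{P}_0^\perp F \wt{P}_0^\perp$; the $\tau^{-1} \diag$-scaled $\wt{P}_0$-corner retains only the order-$\tau$ terms, obtained by pairing a $\tau^{1 / 2}$-factor of $E^*$ with a $\tau^{1 / 2}$-factor of $E$, which assemble into $\diag\bigl( \diag^\perp( F )^* ( a \otimes \id ) \diag^\perp( F ) \bigr)$; and the $\tau^{-1 / 2}$-scaled off-diagonal corners supply the off-diagonal blocks of $F^* \wt{P}_0^\perp ( a \otimes \id ) \wt{P}_0^\perp F$. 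The subtraction $-\wt{P}_0 F^* \wt{P}_0^\perp ( a \otimes \id ) \wt{P}_0^\perp F \wt{P}_0$ appears because the $\wt{P}_0$-corner of $F^* \wt{P}_0^\perp ( a \otimes \id ) \wt{P}_0^\perp F$ is never produced by the quadratic piece and must be removed. Since the crude estimate $\| m( T ) \| \lesssim \tau^{-1} \| T \|$ is too weak to majorise the double series, I would instead reuse the refined structural description of $m( G^n )$ and the bound $\| D_n' \| \le ( 5 n - 3 + 2 \cdot 3^{n + 1} ) c^n$ from the proof of Theorem~\ref{thm:exhp}, extended to the doubly-indexed products $G^p ( a \otimes \id ) G^q$, so that an $M$-test delivers $cb$-convergence uniformly in $\tau$. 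Summing the three pieces then gives $\| \nmodf{\Phi( \tau )}{\tau} - \Psi \|_\cb \to 0$ with $\Psi$ the generator~\eqref{eqn:ehPsidef}.

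The remaining claims are then formal. By Theorem~\ref{thm:main}, $J^{\ppi \comp \Phi( \tau ), \tau} \ttocb j^\psi$ with $\psi$ as in~\eqref{eqn:psidef}; since $\Psi$ has exactly the form~\eqref{eqn:ehPsidef}, Theorem~\ref{thm:eh} identifies $\psi$ with~\eqref{eqn:ehpsi} for $G$ as in~\eqref{eqn:Gdef} and shows $j^\psi_t( a ) = X_t^* ( a \otimes \id_\mmul ) X_t$, where $X$ solves the right Hudson--Parthasarathy equation~\eqref{eqn:rightqsde}. Finally, the operator $F$ arising here is identical to that of Theorem~\ref{thm:exhp}, where it was verified to meet the unitarity conditions of Theorem~\ref{thm:hp}; hence $X$ is a unitary process $U$, so $j^\psi_t( a ) = U_t^* ( a \otimes \id_\mmul ) U_t$ and each $j^\psi_t$ is a normal $*$-homomorphism. \textbf{The main obstacle} is the quadratic term: the double-power-series accounting of $m\bigl( E^* ( a \otimes \id_\statesp ) E \bigr)$ under the three distinct scalings, matching the surviving limits precisely to the three It\^{o}-correction terms of~\eqref{eqn:ehPsidef} while controlling the double series by an $M$-test.
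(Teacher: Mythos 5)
Your proposal is correct and follows essentially the same route as the paper: both split $\nmodf{\Phi(\tau)}{\tau}(a)$ into the two linear pieces (handled by the working already done for Theorem~\ref{thm:exhp}) plus the quadratic piece $m\bigl(E^*(a\otimes\id_\statesp)E\bigr)$, expand the latter as a double power series in $G = A + \tau^{1/2}B + \tau C$ using $AC = CA = 0$ and $ABA = 0$, track which monomials survive each scaling to recover the It\^{o}-correction terms of~\eqref{eqn:ehPsidef} via~\eqref{eqn:Upsilon}, control the double series by an $M$-test with the bounds on the error terms $D_{j,k}$, and then conclude by Theorems~\ref{thm:main}, \ref{thm:eh} and the unitarity of $F$ established in Theorem~\ref{thm:exhp}. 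The only differences are cosmetic (e.g.\ you treat $m\bigl(E^*(a\otimes\id_\statesp)\bigr)$ by adjoint-invariance rather than folding it into a single sum with its adjoint).
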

\begin{proof}
Fix $a \in \vna$ and let $m$, $G$, $A$, $B$, $C$, $c$, $\tau_0$ and
$r_\tau$ be as in the proof of Theorem~\ref{thm:exhp}, so that, in
particular,
\[
\nmodf{\Phi( \tau )}{\tau}( a ) = %
\sum_{j = 1}^\infty \frac{1}{j!} %
m\bigl( ( a \otimes \id_\statesp ) ( -\I G )^j + %
( \I G )^j ( a \otimes \id_\statesp ) \bigr) + %
\sum_{j, k = 1}^\infty \frac{\I^{j - k}}{j! \, k!} %
m( G^j ( a \otimes \id_\statesp ) G^k ).
\]
From the working in that proof, the first series converges to
$( a \otimes \id_\statesp ) F + F^* ( a \otimes \id_\statesp )$ as
$\tau \to 0$ and, considered as a function of $a$, the convergence
holds in the completely bounded sense.

For the double series, note that
\[
A ( a \otimes \id_\statesp ) B A = %
A B ( a \otimes \id_\statesp ) A = 0 \quad \text{and} \quad %
A ( a \otimes \id_\statesp ) C = C ( a \otimes \id_\statesp ) A = 0;
\]
therefore, after some working,
\[
G ( a \otimes \id_\statesp ) G = %
A ( a \otimes \id_\statesp ) A + %
\tau^{1 / 2} ( B ( a \otimes \id_\statesp ) A + %
A ( a \otimes \id_\statesp ) B ) + %
\tau B ( a \otimes \id_\statesp ) B + \tau^{3 / 2} D_{1, 1}
\]
and, if $j$ and $k$ are not both $1$,
\begin{align*}
G^j ( a \otimes \id_\statesp ) G^k & = %
A^j ( a \otimes \id_\statesp ) A^k + %
\tau^{1 / 2} ( B A^{j - 1} ( a \otimes I_\statesp ) A^k + %
A^j ( a \otimes I_\statesp ) A^{k - 1} B ) \\[1ex]
& \quad + \tau ( A^{j - 1} B ( a \otimes I_\statesp ) B A^{k - 1} + %
B A^{j - 1} ( a \otimes I_\statesp ) A^{k - 1} B ) \\[1ex]
 & \quad + \tau \sum_{l = 0}^{j - 2} %
A^l B^2 A^{j - 2 - l} ( a \otimes \id_\statesp ) A^k + %
\tau \sum_{l = 0}^{k - 2} %
A^j ( a \otimes \id_\statesp ) A^l B^2 A^{k - 2 - l} + %
\tau^{3 / 2} D_{j, k},
\end{align*}
where $\| a \mapsto D_{j, k} \|_\cb \le ( 3 c )^{j + k}$ for all
$\tau \in ( 0, \tau_0 )$.

Furthermore, if $j$, $k \ge 1$ then
\begin{align*}
m( A^j ( a \otimes \id_\statesp ) A^k ) & = %
A^j ( a \otimes \id_\statesp ) A^k, \\[1ex]
m\bigl( \tau^{1 / 2} %
B A^{j - 1} ( a \otimes I_\statesp ) A^k \bigr) & = %
B A^{j - 1} ( a \otimes I_\statesp ) A^k \\[1ex]
\text{and} \quad %
m\bigl( \tau^{1 / 2} %
A^j ( a \otimes I_\statesp ) A^{k - 1} B \bigr) & = %
A^j ( a \otimes I_\statesp ) A^{k - 1} B.
\end{align*}
Also,
\[
m( \tau B ( a \otimes \id_\statesp ) B ) = %
\diag( B ( a \otimes \id_\statesp ) B ) + %
\tau^{1 / 2} r_\tau( B ( a \otimes \id_\statesp ) B ),
\]
whereas, if $j$ and $k$ are not both $1$,
\begin{align*}
m( \tau A^{j - 1} B ( a \otimes \id_\statesp ) B A^{k - 1} ) & = %
\tau^{1 / 2} %
r_\tau( A^{j - 1} B ( a \otimes \id_\statesp ) B A^{k - 1} ) \\[1ex]
\text{and} \quad %
m( \tau B A^{j - 1} ( a \otimes \id_\statesp ) A^{k - 1} B \bigr) & %
= \diag( B A^{j - 1} ( a \otimes \id_\statesp ) A^{k - 1} B ).
\end{align*}
Finally, if
\[
S_{j, k} := %
\sum_{l = 0}^{j - 2} A^l B^2 A^{j - 2 - l} %
( a \otimes \id_\statesp ) A^k + %
\sum_{l = 0}^{k - 2} A^j %
( a \otimes \id_\statesp ) A^l B^2 A^{k - 2 - l}
\]
then $m( \tau S_{j, k} ) = \tau^{1 / 2} r_\tau( S_{j, k} )$. Hence
\begin{multline*}
m( G^j ( a \otimes \id_\statesp ) G^k ) = %
A^j ( a \otimes \id_\statesp ) A^k + %
B A^{j - 1} ( a \otimes \id_\statesp ) A^k + %
A^j ( a \otimes \id_\statesp ) A^{k - 1} B \\
+ \diag( B A^{j - 1} ( a \otimes \id_\statesp ) A^{k - 1} B ) + %
\tau^{1 / 2} D_{j, k}',
\end{multline*}
where
\[
D_{j, k}' = %
r_\tau( A^{j - 1} B ( a \otimes \id_\statesp ) B A^{k - 1} ) + %
r_\tau( S_{j, k} ) + m( \tau D_{j, k} ),
\]
for all $j$, $k \ge 1$. Since
\[
\| a \mapsto D_{j, k}' \|_\cb \le %
( 5 + 5 ( j - 1 + k - 1 ) + 6 \cdot 3^{j + k} ) c^{j + k}
\]
for all $\tau \in \tau_0$ and $j$, $k \ge 1$, the result now follows
by an $M$-test argument, the identity~\eqref{eqn:Upsilon} and
Theorems~\ref{thm:hp} and~\ref{thm:eh}; as~$\tau \to 0$, the double
series
$\sum_{j, k = 1}^\infty \I^{j - k} %
m( G^j ( a \otimes \id_\statesp ) G^k ) / ( j! \, k! )$
tends to
\[
\begin{bmatrix}
\diag_0( H_o( a \otimes \id_{\statesp_0} ) H_o + %
L^* e_1^{\I H_\times} ( a \otimes I_{\statesp_0^\perp} ) %
e_1^{-\I H_\times } L ) & %
\I L^* e_1^{\I H_\times} ( a \otimes \id_{\statesp_0^\perp} ) %
( e^{-\I H_\times} - \id_{\ini \otimes \statesp_0^\perp} ) \\[1ex]
-\I ( e^{\I H_\times} - \id_{\ini \otimes \statesp_0^\perp} ) %
( a \otimes \id_{\statesp_0^\perp} ) e_1^{-\I H_\times} L & %
( e^{\I H_\times} - \id_{\ini \otimes \statesp_0^\perp} ) %
( a \otimes \id_{\statesp_0^\perp} ) %
( e^{-\I H_\times} - \id_{\ini \otimes \statesp_0^\perp} )
\end{bmatrix},
\]
where $e_1^{-\I H_\times}$ is an abbreviation for
$\exp_1( -\I H_\times )$ \etc.
\end{proof}

\begin{remark}
Theorem~\ref{thm:eh} is a generalisation of~\cite[Theorem~5]{Blt10b};
see also \cite[Theorem~4.1 and Remark~3]{Gou04} for the vector-state
case. It provides an explicit description of the Lindblad
generator~$\Lind$ for expectation semigroup of the cocycle~$j^\psi$
which arises in the limit: if $a \in \vna$ then
\begin{align*}
\Lind( a ) & := E^\vac \psi( a ) E_\vac \\[1ex]
 & \hphantom{:}= %
\sstate\bigl( \Psi( a ) \bigr) \\[1ex]
 & \hphantom{:}= -\I [ a, \sstate_0( H_d ) ] - %
\hlf \{ a, \sstate_0( H_o^2 ) \} - %
a \, \sstate_0( L^* \exp_2( -\I H_\times ) L ) - %
\sstate_0( L^* \exp_2( \I H_\times ) L ) \, a \\
 & \hspace{8em} + %
\sstate_0( H_o ( a \otimes \id_{\statesp_0} ) H_o ) + %
\sstate_0( L^* \exp_1( \I H_\times ) %
( a \otimes \id_{\statesp_0^\perp} ) %
\exp_1( -\I H_\times ) L ) \\[1ex]
 & \hphantom{:}= %
-\I [ a, \sstate_0( H_d' ) ] - \hlf \{ a, \sstate_0( D^* D ) \} + %
\sstate_0( D^* ( a \otimes \id_{\statesp_0^\perp} ) D ) \\
 & \hspace{18em} - \hlf \{ a, \sstate_0( H_o^2 ) \} + %
\sstate_0( H_o ( a \otimes \id_{\statesp_0} ) H_o ),
\end{align*}
where
\[
H_d' := H_d - \mbox{$\frac{\I}{2}$} \diag_0( L^* \bigl( %
\exp_2( -\I H_\times ) - \exp_2( \I H_\times ) \bigr) L ) %
\quad \text{and} \quad %
D := -\I \exp_1( -\I H_\times ) L.
\]
This goes beyond the results for Gibbs states contained
in~\cite{AtJ07a}.
\end{remark}

\subsection*{Acknowledgements}
This work was begun while the author was an Embark Postdoctoral Fellow
in the School of Mathematical Sciences, University College Cork,
funded by the Irish Research Council for Science, Engineering and
Technology. A significant part of it was completed while a guest of
Professor Rajarama Bhat at the Indian Statistical Institute,
Bangalore, whose hospitality is gratefully acknowledged; this visit
was made as part of the UKIERI research network \emph{Quantum
Probability, Noncommutative Geometry and Quantum Information}. Thanks
are extended to Professor Martin Lindsay for helpful comments on a
previous draft.

\section*{References}%
\addcontentsline{toc}{section}{References}


\begin{thebibliography}{99}

\bibitem{AtJ07a}
\textsc{S.~Attal} and \textsc{A.~Joye},
\textit{The Langevin equation for a quantum heat bath},
J.\ Funct.\ Anal.~247 (2007), no.~2, 253--288.

\bibitem{AtJ07b}
\textsc{S.~Attal} and \textsc{A.~Joye},
\textit{Weak coupling and continuous limits for repeated quantum
interactions},
J.\ Stat.\ Phys.~126 (2007), no.~6, 1241--1283.

\bibitem{AtP06}
\textsc{S.~Attal} and \textsc{Y.~Pautrat},
\textit{From repeated to continuous quantum interactions},
Ann.\ Henri Poincar\'{e}~7 (2006), no.~1, 59--104.

\bibitem{Blt08}
\textsc{A.~C.~R.~Belton},
\textit{Approximation via toy Fock space -- the vacuum-adapted
viewpoint}, in: Quantum Stochastics and Information,
V.P.~Belavkin and M.~Gu\c{t}\u{a} (eds.), World Scientific,
Singapore, 2008, 3--22.

\bibitem{Blt10a}
\textsc{A.~C.~R.~Belton},
\textit{Random-walk approximation to vacuum cocycles},
J.~London Math.\ Soc.~(2)~81 (2010), no.~2, 412--434.

\bibitem{Blt10b}
\textsc{A.~C.~R.~Belton},
\textit{Quantum random walks and thermalisation},
Comm.\ Math.\ Phys.~300 (2010), no.~2, 317--329.

\bibitem{BHJ09}
\textsc{L.~Bouten}, \textsc{R.~van Handel} and \textsc{M.~R.~James},
\textit{A discrete invitation to quantum filtering and feedback
control},
SIAM Rev.~51 (2009), no.~2, 239--316.

\bibitem{BJM06}
\textsc{L.~Bruneau}, \textsc{A.~Joye} and \textsc{M.~Merkli},
\textit{Asymptotics of repeated interaction quantum systems},
J.\ Funct.\ Anal.~239 (2006), no.~1, 310--344.

\bibitem{BrP09}
\textsc{L.~Bruneau} and \textsc{C.-A.~Pillet},
\textit{Thermal relaxation of a QED cavity},
J.\ Stat.\ Phys.~134 (2009), no.~5--6, 1071--1095.

\bibitem{Goh10}
\textsc{R.~Gohm},
\textit{Non-commutative Markov chains and multi-analytic operators},
J.\ Math.\ Anal.\ Appl.~364 (2010), no.~1, 275-288.

\bibitem{Gou04}
\textsc{J.~Gough},
\textit{Holevo-ordering and the continuous-time limit for open Floquet
dynamics}, Lett.\ Math.\ Phys.~67 (2004), no.~3, 207--221.

\bibitem{GoJ09}
\textsc{J.~Gough} and \textsc{M.~R.~James},
\textit{The series product and its application to quantum feedforward
and feedback networks},
IEEE Trans.\ Automat.\ Control~54 (2009), no.~11, 2530--2544.

\bibitem{Lin05}
\textsc{J.~M.~Lindsay},
\textit{Quantum stochastic analysis -- an introduction},
in: Quantum Independent Increment Processes~I,
M.~Sch\"{u}rmann and U.~Franz (eds.), Lecture Notes in
Mathematics~1865, Springer, Berlin, 2005, 181--271.

\bibitem{LiW00}
\textsc{J.~M.~Lindsay} and \textsc{S.~J.~Wills},
\textit{Existence, positivity and contractivity for quantum stochastic
flows with infinite dimensional noise},
Probab.\ Theory Related Fields~116 (2000), no.~4, 505--543.

\bibitem{LiW01}
\textsc{J.~M.~Lindsay} and \textsc{S.~J.~Wills},
\textit{Existence of Feller cocycles on a $C^*$-algebra},
Bull.\ London Math.\ Soc.~33 (2001), no.~5, 613--621.

\bibitem{Sah08}
\textsc{L.~Sahu},
\textit{Quantum random walks and their convergence to Evans--Hudson
flows},
Proc.\ Indian Acad.\ Sci.\ Math.\ Sci.~118 (2008), no.~3, 443--465.

\bibitem{Tak79}
\textsc{M.~Takesaki},
\textit{Theory of operator algebras I},
Springer, New York, 1979.

\end{thebibliography}
\end{document}